%% file: main.tex
\documentclass[final,hidelinks,onefignum,onetabnum]{siamart250211}

\usepackage{lipsum}
\usepackage{amsfonts}
\usepackage{epstopdf}
\usepackage{url}
\usepackage{afterpage}
\ifpdf
  \DeclareGraphicsExtensions{.eps,.pdf,.png,.jpg}
\else
  \DeclareGraphicsExtensions{.eps}
\fi

\usepackage[normalem]{ulem}
\usepackage[labelfont=bf, size=small]{caption}
\usepackage[labelformat=empty, size=small]{subcaption}

\usepackage{microtype, fancyhdr, siunitx, graphicx, hyperref, epstopdf, tikz,
  adjustbox, mathtools, float, anyfontsize, relsize}
\usepackage{bm, amssymb, amsmath, mathrsfs, dsfont}
\usepackage{etoolbox}
\usepackage[algo2e, ruled, linesnumbered, resetcount]{algorithm2e}
\SetFuncSty{textsc} \SetKwProg{Function}{}{:}{}
\SetKwComment{Comment}{$\triangleright$\ }{}

\usetikzlibrary{backgrounds}
\usetikzlibrary{decorations.pathreplacing}
\usetikzlibrary{patterns}
\usetikzlibrary{math}

\usepackage{enumitem}
\setlist[itemize]{leftmargin=2em, topsep=0.5\baselineskip, itemsep=0.5\baselineskip}

\input{preamble.tex}

\def\spacingset#1{\renewcommand{\baselinestretch}%
{#1}\small\normalsize} \spacingset{1}

\newsiamremark{remark}{Remark}
\newsiamremark{hypothesis}{Hypothesis}
\crefname{hypothesis}{Hypothesis}{Hypotheses}
\newsiamthm{claim}{Claim}
\newsiamthm{conjecture}{Conjecture}

\headers{A Butterfly-accelerated Manifold Harmonic Transform}{Beckman, Potter, and  O'Neil}

\title{A Butterfly-accelerated Manifold Harmonic Transform\thanks{Submitted to
    the editors DATE. \funding{P. G. Beckman was partially supported by the
    Office of Naval Research under award~\#N00014-21-1-2383 and by the U.S.
    Department of Energy, Office of Science, Office of Advanced Scientific
    Computing Research, Department of Energy Computational Science Graduate
    Fellowship under Award Number DE-SC0022158. S. F. Potter was partially
    supported by the Office of Naval Research under award~\#N00014-18-1-2307. M.
    O'Neil was partially supported by the Office of Naval Research under
    awards~N00014-18-1-2307 and~\#N00014-21-1-2383.} } }

\author{Paul G. Beckman\thanks{Oden Institute, University of Texas at Austin,
        Austin, TX (\texttt{\email{paul.beckman@austin.utexas.edu}})}
  \and
  Samuel F. Potter\thanks{Robert McNeel \& Associates, Seattle, WA
    (\texttt{\email{sam@mcneel.com}})}
  \and
  Michael O'Neil\thanks{Courant Institute, New York University, New York NY
    (\texttt{\email{oneil@cims.nyu.edu}})}. }

\usepackage{amsopn}

\ifpdf
\hypersetup{ pdftitle={A Butterfly-accelerated Manifold Harmonic Transform}
  pdfauthor={P. G. Beckman, S. F. Potter, and M. O'Neil} }
\fi

\begin{document}

\maketitle

\begin{abstract}
  The eigenfunctions of the Laplacian are a natural basis of functions for many
  tasks in computational mathematics. On the circle and sphere, the
  eigenfunctions are given by complex periodic exponentials and spherical
  harmonics, respectively, and much work has been done to develop fast
  algorithms for analyzing and synthesizing data in these bases. In this work,
  we generalize these special-case transforms to Laplace-Beltrami eigenfunctions
  of arbitrary surfaces, referred to as manifold harmonics. The resulting fast
  algorithm for computing linear combinations of the manifold harmonics is based
  on a butterfly factorization, which hierarchically compresses the transform
  matrix by constructing nested low-rank approximations of carefully selected
  submatrices. Several numerical examples are provided which demonstrate the
  speedups and reduction in memory requirements achieved by our algorithm for a
  variety of geometries, discretizations, and applications. In addition, a
  detailed analysis of the algorithm is given in the case that the underlying
  manifold is the flat periodic square.
\end{abstract}

\begin{keywords}
  Butterfly algorithm, manifold harmonics, Laplace-Beltrami, Fourier transform,
  Fiedler tree
\end{keywords}

\begin{MSCcodes}
  65T99, 65R10, 65D15
\end{MSCcodes}

\section{Introduction}
\label{sec:intro}

The Fast Fourier Transform (FFT)~\cite{van1992computational} and Nonuniform Fast
Fourier Transform (NUFFT)~\cite{dutt1993fast,greengard2004accelerating} have
revolutionized computational methods across fields including signal processing,
differential equations, and statistics by accelerating the computation of
discrete trigonometric sums with~$n$ terms and~$n$ outputs from~$\bO(n^2)$
to~$\cO (n \log n)$ time. In the case of the FFT, this acceleration is achieved
purely by algebraic manipulations of the quantity~$e^{2\pi i j k/n}$. On the
other hand, the NUFFT relies on Fourier convolution theorems and numerical
approximations based on an FFT applied to a periodized kernel function. However,
both of these algorithms strongly rely on the fact that the underlying geometry
is simple and periodic; spectral analysis in~$\bbR^d$ or~$\mathbb{T}^d$ need
only manipulate complex exponentials.

In order to perform spectral analysis in more general domains, one requires a
generalization of the Fourier transform which takes into account the underlying
geometry. To motivate such a generalization, consider the two-dimensional
type-II nonuniform discrete Fourier transform:
\begin{align} \label{eq:2D-DFT}
  f(\bm{x}_j) = \sum_{\bm{k} \in [-\frac{\sqrt{m}}{2},\frac{\sqrt{m}}{2}-1]^2}
  c_{\bm{k}} \, e^{i\bm{k}\cdot \bm{x}_j}, \quad \text{for } j=1,\dots,n.
\end{align}
The above sum evaluates the function~$f$ at $n$ nonuniform spatial locations
$\bm{x}_j \in [-\pi,\pi]^2$ given the coefficients~$c_{\bm{k}}$, which denote
Fourier coefficients for $m$ uniform integer frequencies $\bm{k} \in \Z^2$.
Noting that the functions~$\phi_{\bm{k}}(\bm{x}) = e^{i\bm{k}\cdot \bm{x}}$ are
exactly the eigenfunctions of the Laplacian $\Delta$ on $[-\pi,\pi]^2$ under
periodic boundary conditions, it is natural to consider a generalization of the
Fourier transform which uses Laplacian eigenfunctions to perform harmonic
analysis on domains other than the flat periodic square.

For a general compact $d$-manifold $\M$ with boundary~$\partial\M$, consider the
Laplace-Beltrami eigenproblem
\begin{equation}
  \label{eq:eigenproblem}
  \begin{aligned}
    \LBO \phi &= \lambda \phi, & \quad &\text{in } \M, \\
    g\Big(\phi, \pder{\phi}{n}\Big) &= 0 & \quad &\text{on } \partial\M,
  \end{aligned}
\end{equation}
where~$\LBO$ is the Laplace-Beltrami operator and $g(\phi, \partial
\phi/\partial n)$ represents any suitable set of boundary conditions, e.g.
Dirichlet, Neumann, Robin, or a mixture thereof. Of course if~$\M$ does not have
a boundary, then the above eigenproblem needs no boundary conditions. This
eigenproblem yields a set of nonnegative eigenvalues $0 \leq \lambda_1 \leq
\lambda_2 \leq \dots$ whose corresponding eigenfunctions~$\phi_k$ form an
orthonormal basis for $L^2(\M)$~\cite{chavel1984eigenvalues}. A generalization
of the discrete Fourier transform~\eqref{eq:2D-DFT} to the manifold~$\M$ is then
given by the discrete \textit{manifold harmonic transform} (MHT)
\begin{equation}
  \label{eq:MHT-sum}
  f(\bm{x}_j) = \sum_{k=1}^m c_k \, \phi_k(\bm{x}_j), \quad \text{for } j=1,\dots,n,
\end{equation}
which we write in the equivalent matrix form $\vct{f} = \mtx{\Phi} \vct{c}$
where~$\mtx{\Phi} \in \C^{n \times m}$, \mbox{$\mtx{\Phi}_{jk} =
\phi_k(\bm{x}_j)$}, and~$\vct{f}_j = f(\bm{x}_j)$.
Figure~\ref{fig:eigenfunctions} shows some numerically computed Laplace-Beltrami
eigenfunctions~$\phi_k$ on an example manifold $\M$, each of which serves as a
column of the matrix~$\mtx{\Phi}$. Note that as we increase the index $k$, the
eigenfunctions exhibit oscillations on decreasing lengthscales, providing a
natural multiscale Fourier-like basis on~$\M$.

\begin{figure}
  \centering
  \newcommand\wdth{0.15\textwidth}
  \includegraphics[trim={6.5cm, 1.5cm, 6cm, 1.1cm}, clip, width=\wdth]{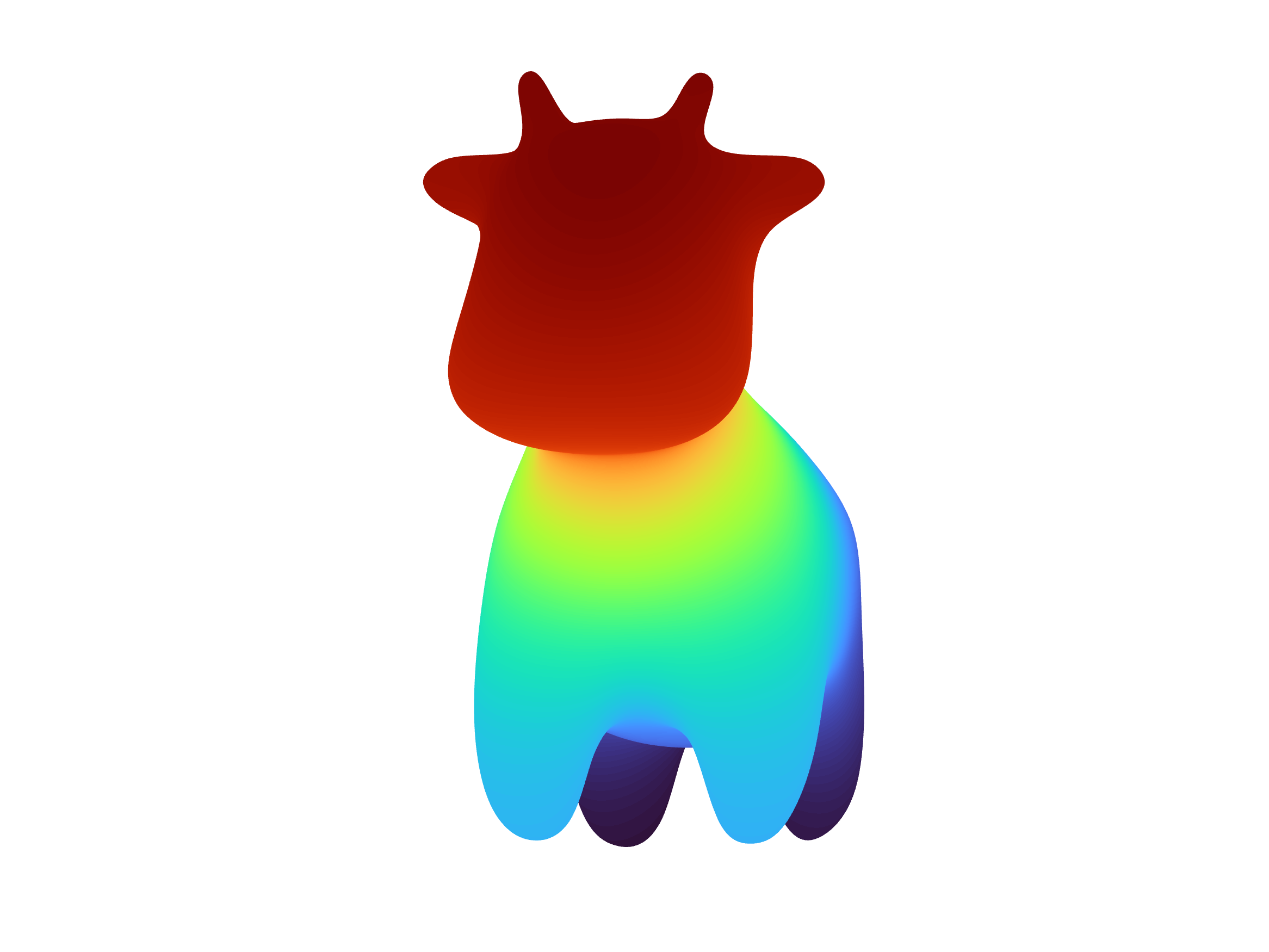}
  \includegraphics[trim={6.5cm, 1.5cm, 6cm, 1.1cm}, clip, width=\wdth]{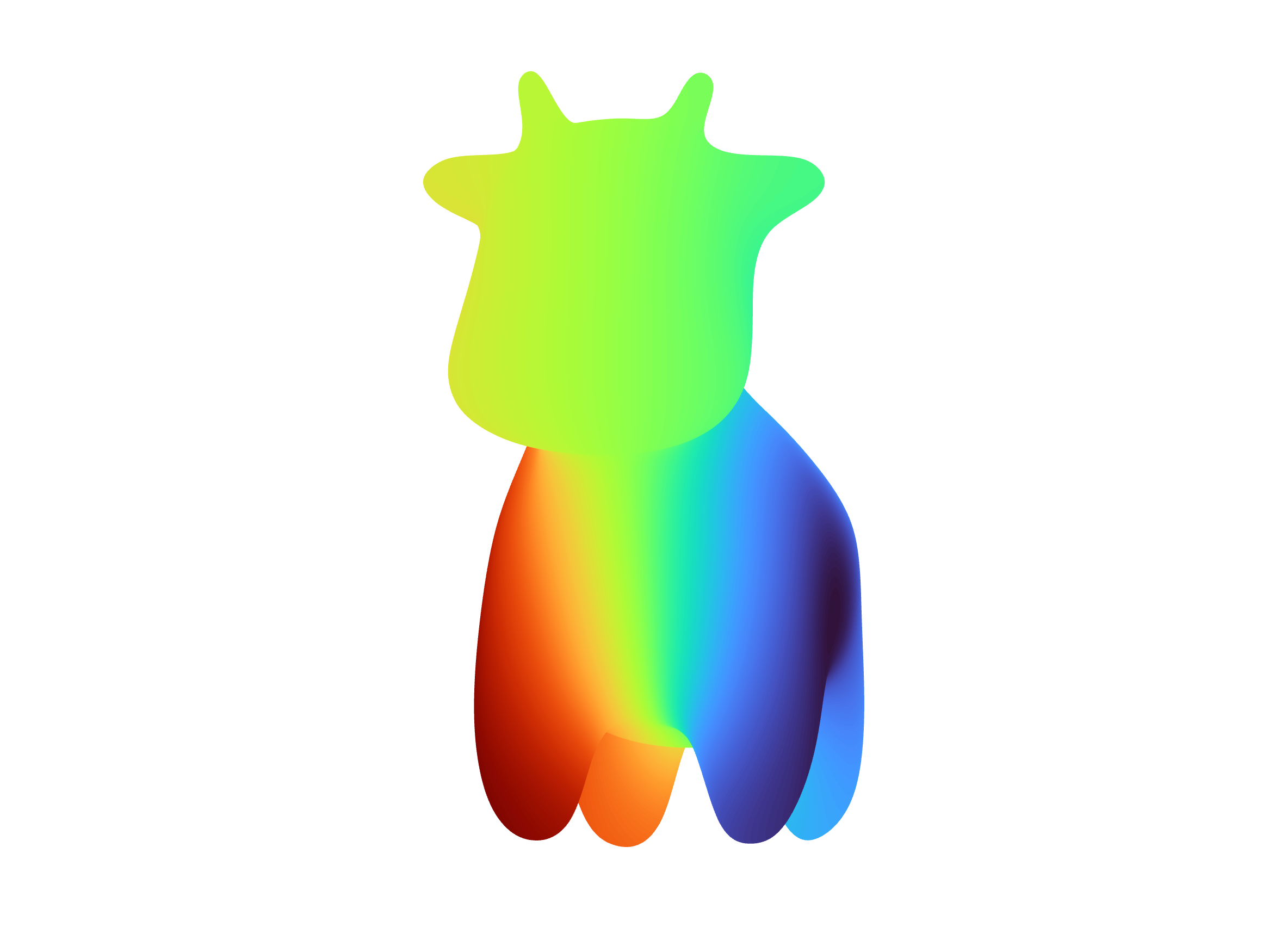}
  \includegraphics[trim={6.5cm, 1.5cm, 6cm, 1.1cm}, clip, width=\wdth]{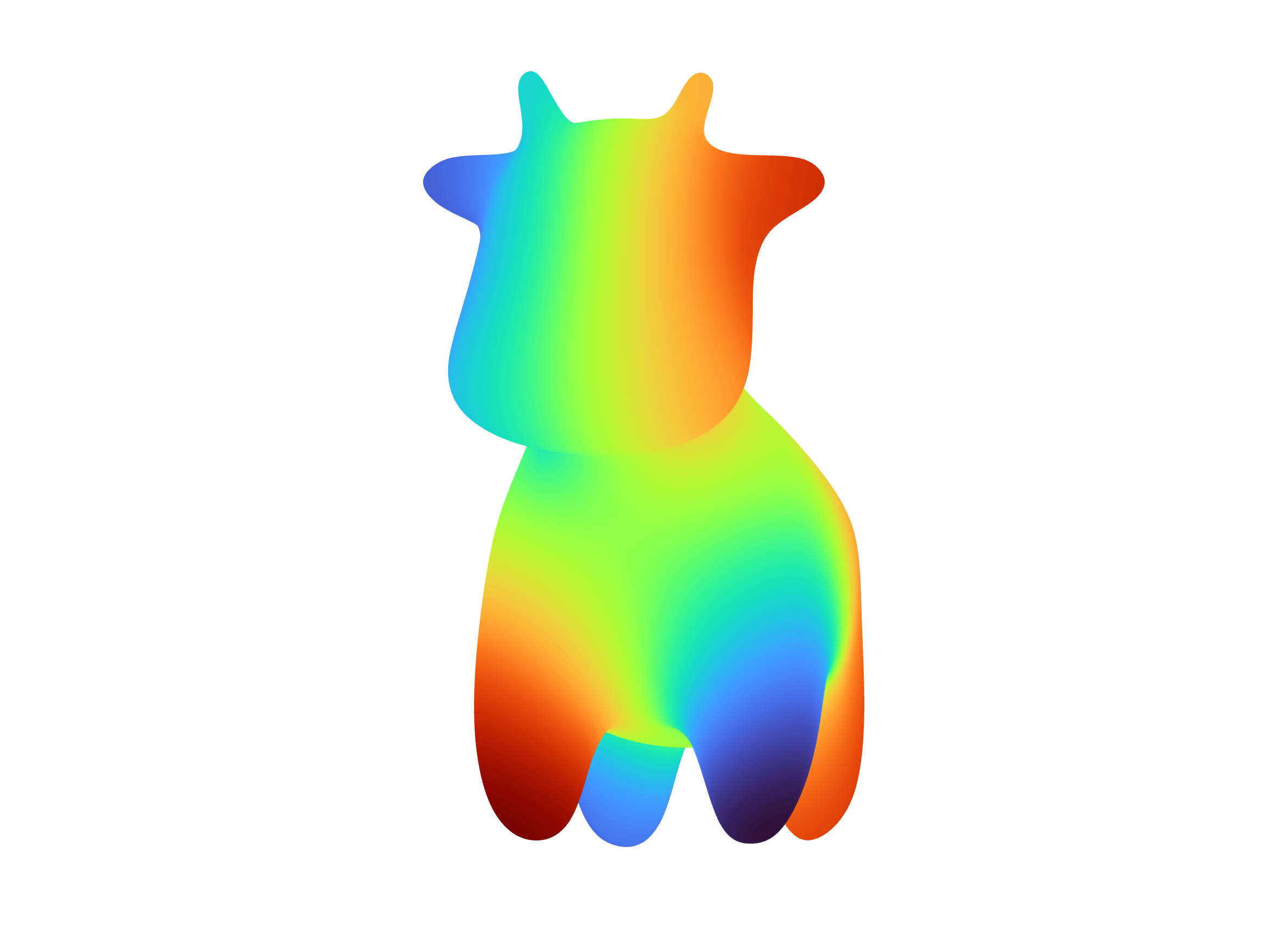}
  \includegraphics[trim={6.5cm, 1.5cm, 6cm, 1.1cm}, clip, width=\wdth]{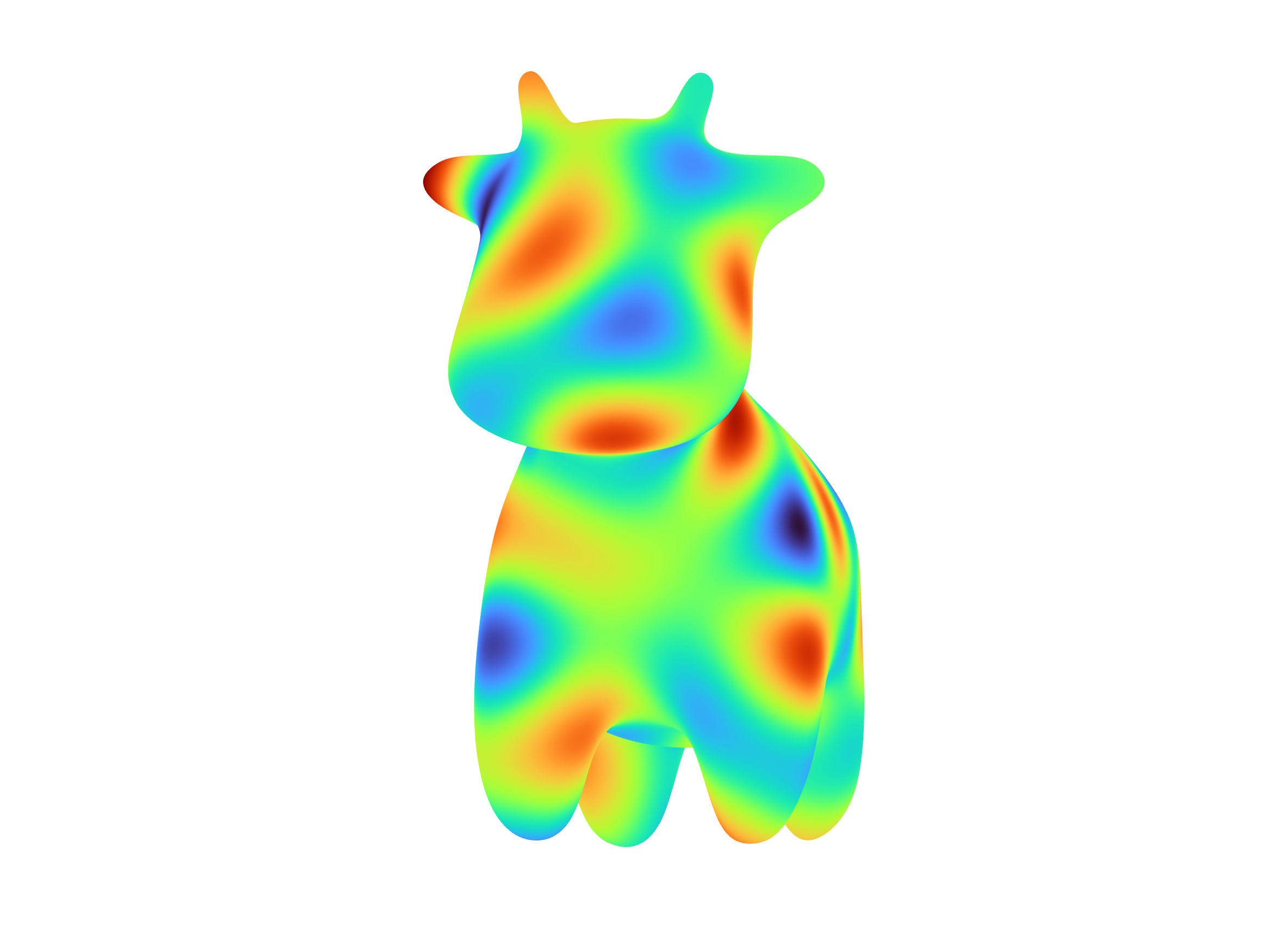}
  \includegraphics[trim={6.5cm, 1.5cm, 6cm, 1.1cm}, clip, width=\wdth]{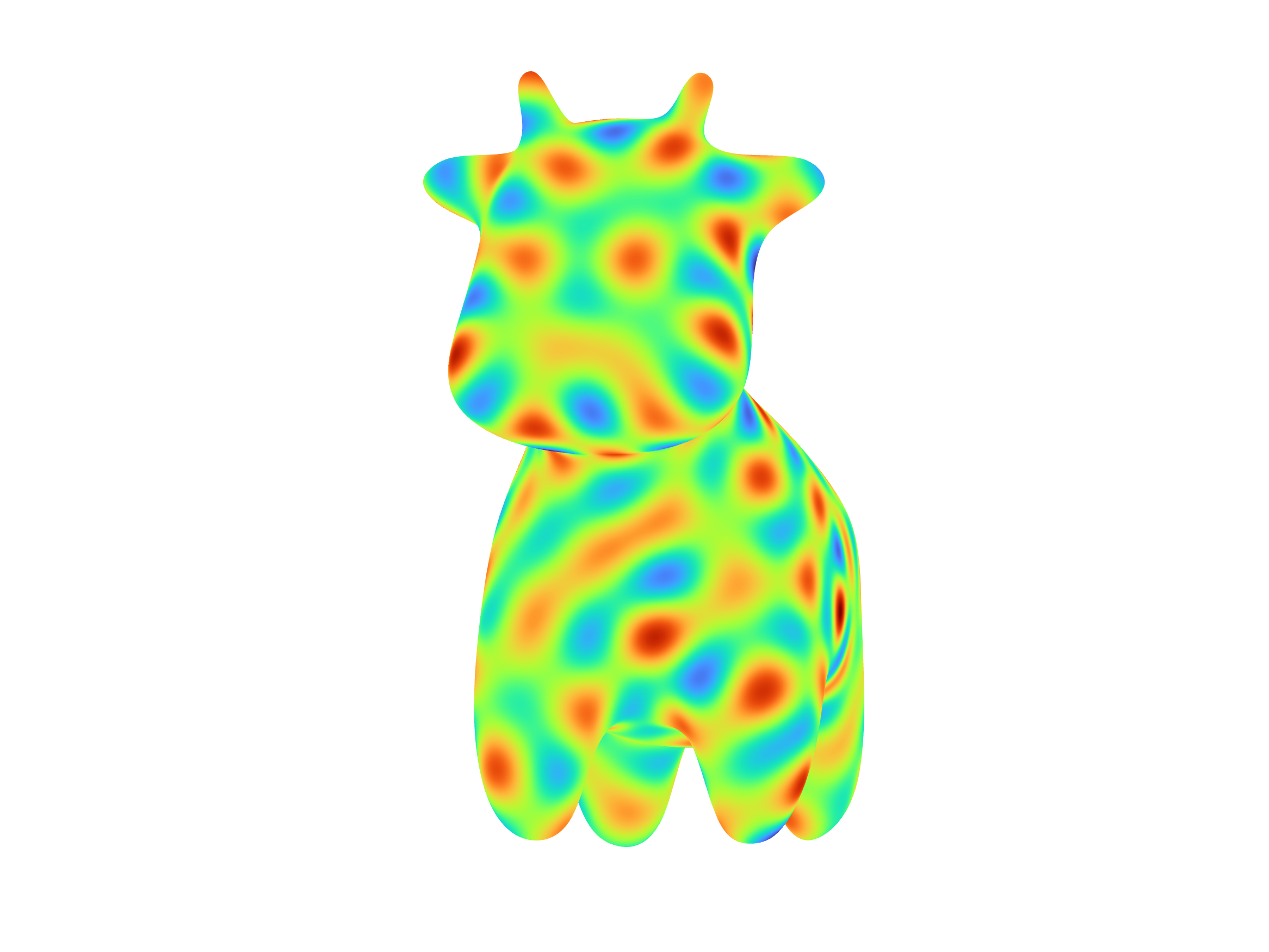}
  \includegraphics[trim={6.5cm, 1.5cm, 6cm, 1.1cm}, clip, width=\wdth]{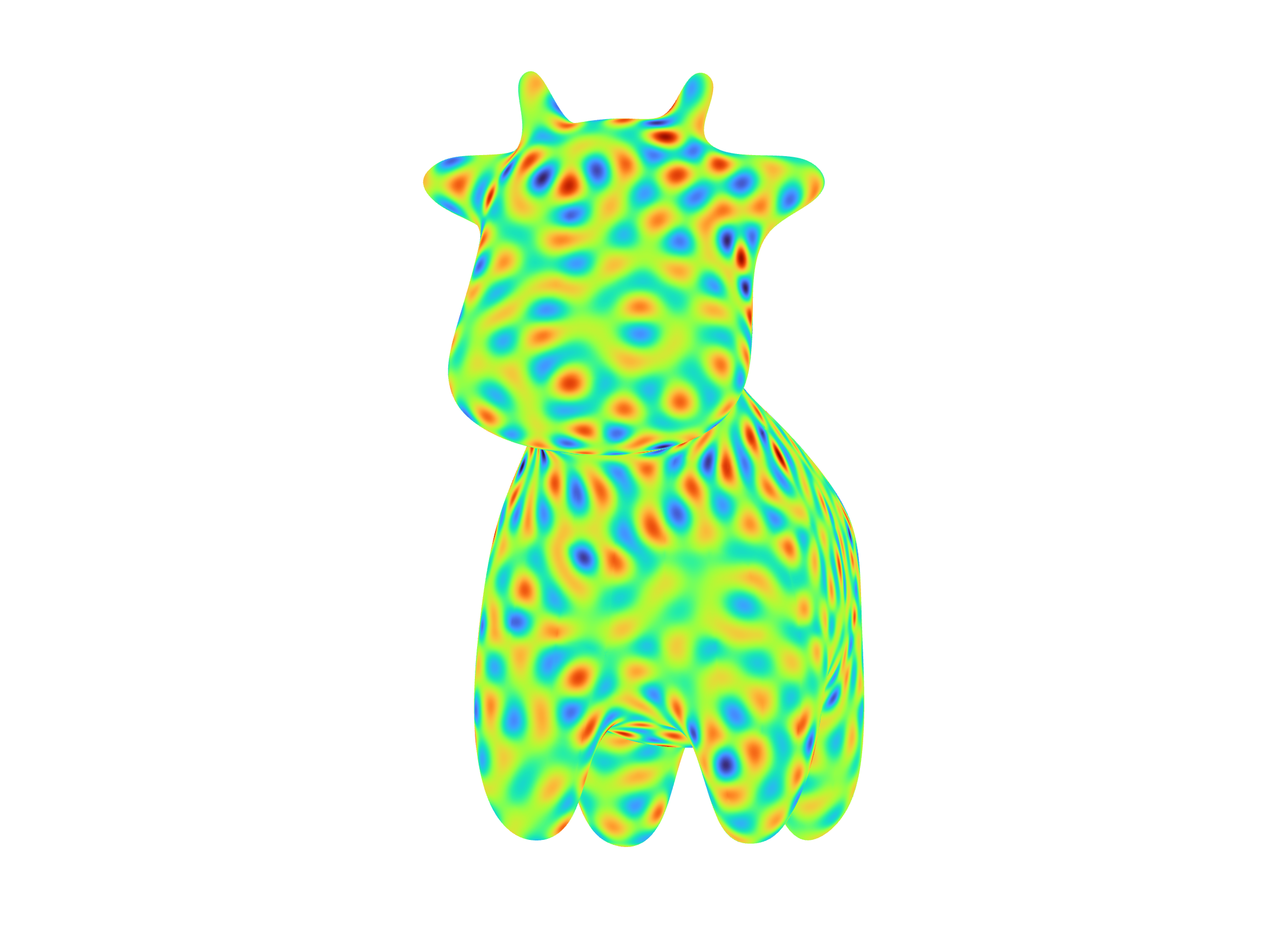}
  \caption{Eigenfunctions $\phi_k$ of the Laplace-Beltrami operator on a cow
  manifold for \mbox{$k=2,4,8,78,340,1350$}.}
  \label{fig:eigenfunctions}
\end{figure}
  
The MHT presents a number of computational challenges when compared to the FFT.
First, unless the geometry is extremely simple, the requisite eigenfunctions
$\phi_k$ are not available in closed form. In addition, even when the
eigenfunctions are available in closed form, there is no general fast algorithm
for evaluating linear combinations of these functions. When $\M$ is the sphere
or disk, the eigenfunctions $\phi_k$ are given by spherical harmonics and Bessel
functions, respectively, and one can take advantage of various symmetries and
analytical expansions to develop fast algorithms~\cite{townsend2015fast,
beckman2026nonuniform, slevinsky2019fast, seljebotn2012wavemoth, tygert2010fast,
mohlenkamp1999fast} which rely on the FFT.

However, no such symmetries or explicit expansions are available \textit{a
priori} for the MHT on general manifolds~$\M$. Therefore the eigenfunctions
$\phi_k$ must be computed numerically in general, and no analytical reduction to
complex exponentials can be made. Instead, we must rely on general-purpose
results regarding the eigenfunctions of the Laplace-Beltrami operator and
leverage purely linear-algebraic structure to compress the matrix~$\mtx{\Phi}$.
A suitable compressed linear-algebraic representation of oscillatory operators
is given by the \textit{butterfly factorization}~\cite{oneil2010algorithm,
li2015butterfly, li2017interpolative}, which we apply and analyze here in the
MHT context. The butterfly factorization is a generalization of the hierarchical
algebraic structure of the FFT algorithm to more general operators satisfying a
\textit{complementary low-rank property} in which a particular hierarchy of
matrix blocks are approximately low-rank. Butterfly factorizations have been
successfully applied to compress Fourier integral
operators~\cite{candes2009fast, li2015butterfly, li2017interpolative}, Hankel
transforms~\cite{li2015butterfly,oneil2010algorithm}, and spherical harmonic
transforms~\cite{tygert2010fast,seljebotn2012wavemoth}, among others.

The MHT and related computations appear in applications to computer graphics and
meshing~\cite{vallet2008spectral, patane2018laplacian, levy2006laplace,
reuter2009discrete, seo2010heat}, surface partial differential equations
(PDEs)~\cite{dziuk2013finite, bonito2013afem}, model order
reduction~\cite{alsnayyan2022manifold, alsnayyan2022iso},
statistics~\cite{borovitskiy2020matern, lang2023galerkin}, as well as data
assimilation and kernel learning~\cite{giannakis2019data, belkin2003laplacian}.
A number of compressed representations for the lowest Laplace-Beltrami
eigenfunctions have been proposed in the
literature~\cite{bennighof2004automated,coifman2006diffusion,nasikun2018fast}.
However, these works focus mainly on efficiently computing these eigenfunctions,
and do not explicitly investigate the possibility of a fast transform for
rapidly evaluating linear combinations of many
eigenfunctions~$\{\phi_k\}_{k=1}^m$ at a large number of target
points~$\{\bm{x}_j\}_{j=1}^n \subset \M$ (or of a fast adjoint transform which
maps function values to expansion coefficients).

In this work, we introduce a discretization-agnostic butterfly-accelerated
manifold harmonic transform (BF-MHT) and demonstrate that it can significantly
reduce the cost to store and apply the MHT. We begin by describing the butterfly
factorization and the trees necessary to compress the MHT matrix. Then, in the
case of the two-dimensional periodic Fourier transform, we prove asymptotic
complexity results regarding the cost to store and apply a version of the
BF-MHT. Finally, we provide numerical experiments which verify our complexity
theory and illustrate the performance of the BF-MHT in a number of applications
in computer graphics, statistics, and machine learning.

\paragraph{Notation}

Throughout this work, we use Golub and Van Loan notation~\cite{golub1983matrix},
i.e.~\texttt{MATLAB}-style submatrix notation. For example, given a matrix
$\mtx{\Phi} \in \C^{n \times m}$ and index sets $\tau := [j_1,j_2,\dots,j_p]$
and $\nu := [k_1,k_2,\dots,k_q]$, we will denote the corresponding $p \times q$
submatrix by
\begin{align}
  \mtx{\Phi}(\tau, \nu) = \mat{
    \mtx{\Phi}(j_1,k_1) & \cdots & \mtx{\Phi}(j_1,k_q) \\
    \vdots & \ddots & \vdots \\
    \mtx{\Phi}(j_p,k_1) & \cdots & \mtx{\Phi}(j_p,k_q) \\
  }.
\end{align}
We denote column and row subsets by
\begin{equation}
  \mtx{\Phi}(:, \nu) = \mtx{\Phi}([1,\dots,n], \nu) \quad \text{and} \quad
  \mtx{\Phi}(\tau, :) = \mtx{\Phi}(\tau,[1,\dots,m])
\end{equation}
respectively. We use $\abs{\tau} = p$ to denote the size (cardinality) of an
index set.

\section{The butterfly factorization}

Discretizing oscillatory operators such as the Fourier transform generally
results in full-rank matrix representations $\mtx{\Phi}$. Nevertheless, many
such matrices exhibit a \textit{complementary low-rank property} --- a property
by which all contiguous submatrices~$\mtx{\Phi}(\tau, \nu)$ with the same number
of entries~$\abs{\tau}\cdot\abs{\nu}$ have constant or nearly-constant ranks.
Butterfly algorithms begin by constructing a tree on the row indices of the
matrix, and another tree on the column indices. One can then move down the row
tree (halving the number of rows) while simultaneously moving up the column tree
(doubling the number of columns). In doing so, the number of entries in each
block is held constant, and the complementary low-rank property can be exploited
in a hierarchical fashion at each level. Figure~\ref{fig:BF-levels} provides a
graphical representation of the row and column trees and the corresponding
low-rank matrix blocks at each level. Various styles of factorization have been
presented in the literature, including column-wise, row-wise, or hybrid type
factorizations~\cite{li2017interpolative, li2015butterfly, candes2009fast,
pang2020interpolative, zheng2023efficient, michielssen1996multilevel}. In the
nomenclature of~\cite{liu2021butterfly}, we utilize a \textit{row-wise}
butterfly factorization in this work.

\begin{figure}
  \centering
  \input{./figures/BF-levels-tikz.tex}
  \caption{Row tree, column tree, and resulting low-rank blocks of $\mtx{\Phi}$
  at levels $\ell = 0, \dots, 3$.}
  \label{fig:BF-levels}
\end{figure}

We now describe a particular form of the butterfly factorization algorithm,
presented originally in~\cite{oneil2010algorithm}. To align with the Fourier
transform and MHT context of Section~\ref{sec:intro}, we refer to the first
dimension (the row index) as representing space, and the second dimension (the
column index) as representing eigenvalue or frequency. This corresponds to a
``synthesis'' calculation which evaluates linear combinations of basis functions
as in~\eqref{eq:2D-DFT} and~\eqref{eq:MHT-sum}. We assume binary trees $T_x$ and
$T_\lambda$ have been constructed whose nodes are index sets on the rows and
columns of $\mtx{\Phi}$ respectively. The precise choice of trees may affect the
level of resulting compression, but does not alter the algorithm, and this
construction generalizes in a straightforward way to generic $k$-ary trees, and
to two or three dimensions using quadtrees or octrees.

To begin, at level $\ell = 0$ (see Figure~\ref{fig:BF-levels}) we compute a
low-rank factorization of the block column
\begin{align}
  \mtx{\Phi}(:,\nu) = \mtx{U}^{}_{:\nu} \mtx{V}_{:\nu}^*
\end{align} 
for each leaf node $\nu$ in the frequency tree $T_\lambda$. We note that the
butterfly algorithm is agnostic to the particular type of low-rank factorization
used here and throughout, and many are suitable, e.g. SVD, QR, interpolative
decomposition, etc. 

At levels $\ell = 1, \dots, L$ we use the low-rank factors from the previous
level to compute a low-rank factorization of each complementary block at the
current level. Let $\tau$ be a node at level $L-\ell$ of the space tree $T_x$
with parent $p$, and let $\nu$ be a at level $\ell$ of the frequency tree
$T_\lambda$ with children $c_1$ and $c_2$. Figure~\ref{fig:local-labels}
illustrates the relevant subtrees and their node labels. Assume we have computed
low-rank factorizations of $\mtx{\Phi}(p, c_1)$ and $\mtx{\Phi}(p,c_2)$. We can
then move down the space tree and up the frequency tree one level and write
$\mtx{\Phi}(\tau,\nu)$ in terms of the previously computed factors
\begin{align}
  \mtx{\Phi}(\tau,\nu)
  &= \mat{\mtx{\Phi}(p,c_1) & \mtx{\Phi}(p,c_2)}(\tau, :) \\
  &= \mat{\mtx{U}^{}_{pc_1} \mtx{V}_{pc_1}^* & \mtx{U}^{}_{pc_2} \mtx{V}_{pc_2}^*}(\tau, :) \\
  &= \mat{\mtx{U}_{pc_1}(\tau, :) & \mtx{U}_{pc_2}(\tau, :) } \mat{\mtx{V}_{pc_1}^* & \\ & \mtx{V}_{pc_2}^*}.
\end{align}
Since, by assumption, the matrix admits the complementary low-rank property, we
can then compute a low-rank factorization of the first matrix above:
\begin{align}
  \mat{\mtx{U}_{pc_1}(\tau, :) & \mtx{U}_{pc_2}(\tau, :)}
  = \mtx{U}_{\tau\nu}^{} \mtx{R}_{\tau\nu}^*.
\end{align}
This yields a low-rank factorization of the desired matrix block
\begin{align}
  \mtx{\Phi}(\tau, \nu) 
  = \mtx{U}_{\tau\nu}^{} \mtx{R}_{\tau\nu}^* \mat{\mtx{V}_{pc_1}^* & \\ & \mtx{V}_{pc_2}^*}
  = \mtx{U}^{}_{\tau\nu} \mtx{V}_{\tau\nu}^*,
\end{align}
where $\mtx{R}_{\tau\nu}$ is the \textit{transfer matrix} which constructs the
row basis $\mtx{V}_{\tau\nu}$ for the current level implicitly using the row
bases $\mtx{V}_{pc_1}, \mtx{V}_{pc_2}$ from the previous level. An analogous
process is carried out for the remaining blocks whose parent is $p$ in the space
tree, after which we can discard the column bases $\mtx{U}_{pc_1}$ and
$\mtx{U}_{pc_2}$ from the last level. We repeat this computation for each block
at each level $\ell = 1,\dots,L$ in an \textit{upward pass} through the
frequency tree. Finally, we obtain a compressed representation of $\mtx{\Phi}$
consisting of row bases $\mtx{V}_{:\nu}$ for each leaf node $\nu$ in
$T_\lambda$, transfer matrices $\mtx{R}_{\tau\nu}$ for each block at each level
$\ell=1,\dots,L$, and column bases $\mtx{U}_{\tau:}$ for each leaf node $\tau$
in $T_x$. This factorization can be applied to a vector by first multiplying
with the column basis matrices, then multiplying with each transfer matrix at
each level $\ell = L,L-1,\dots,0$ in a \textit{downward pass} through the
frequency tree. See~\cite{oneil2010algorithm} for a detailed description of the
row-wise butterfly factorization and matrix-vector product algorithms.

\begin{figure}
  \hspace*{0.08\textwidth}
  \begin{tikzpicture}[ndsty/.style={circle,draw,minimum width=7mm}]
    \node[yshift=8mm,rectangle,draw,dotted,thick,minimum width=20mm] {Space};
    \node[yshift=8mm, xshift=50mm,rectangle,draw,dotted,thick,minimum width=20mm]
    {Frequency}; \node[ndsty] (gamma) {$p$} child {node[ndsty,yshift=2mm]
    {$\tau$}} child {node[ndsty,yshift=2mm] {}};
    \node[ndsty, xshift=50mm] {$\nu$}
      child {node[ndsty,yshift=2mm] (sigma) {$c_1$}} child
      {node[ndsty,yshift=2mm] {$c_2$}}; \node[yshift=0mm, xshift=38mm] {Level
      $\ell$}; \node[yshift=-13mm, xshift=-22mm] {Level $L-\ell$};
  \end{tikzpicture}
  \caption{Local node labels for relevant subtrees in the space and frequency trees $T_x$ and $T_\lambda$.}
  \label{fig:local-labels}
\end{figure}

Let $r_\epsilon(\tau, \nu)$ denote the $\epsilon$-rank of the block
$\mtx{\Phi}(\tau, \nu)$. Then each transfer matrix $\mtx{R}_{\tau\nu}$ is of
size $\big( r_\epsilon(p,c_1) + r_\epsilon(p,c_2) \big) \times r_\epsilon(\tau,
\nu)$, so that if all complementary blocks are of exactly $\epsilon$-rank~$r$,
then all transfer matrices are of size $2r \times r$. For a matrix $\mtx{\Phi}
\in \C^{n \times m}$ with $m \leq n$, we store $\bO(m)$ transfer matrices at
each of the $\bO(\log m)$ levels, along with column basis matrices
$\mtx{U}_{\tau:}$ at level $L$. This yields an $\bO\big(rn + r^2 m\log m\big)$
storage and application cost. However, depending on the particular rank
structure of $\mtx{\Phi}$, the ranks of complementary blocks may fluctuate or
grow with~$n$ or~$m$, and as a result, the storage and application cost may
exceed the quasilinear cost of the fixed-rank case. This is shown to be true
theoretically and empirically for the MHT in Sections~\ref{sec:rank-analysis}
and~\ref{sec:numerical-experiments} respectively.

We turn now to the cost of the necessary precomputations for the BF-MHT. The
butterfly factorization cost is dominated by the first level, where we must
compute low-rank factorizations of $\bO(m)$ matrices, each of size $n \times
\bO(1)$. This leads to an $\bO(r^2nm)$ factorization cost. A number of
approaches have been proposed to reduce the cost of this precomputation,
including linear cost interpolative decompositions~\cite{pang2020interpolative,
engquist2009fast, chiu2013sublinear, cortinovis2025sublinear} and residual phase
functions~\cite{demanet2012butterfly, candes2009fast}. Unfortunately, these
approaches rely on difficult-to-verify incoherence properties of the submatrices
of $\mtx{\Phi}$, require the physical locations $\{\bm{x}_j\}_{j=1}^n$ to lie on
specific tensor product grids, or necessitate knowledge of a phase function
$\psi$ for which $K(\bm{x}, \bm{\omega}) \approx e^{i\psi(\bm{\omega},
\bm{x})}$. Thus none of these techniques are directly applicable to the BF-MHT.
Furthermore, computing $m$ Laplace-Beltrami eigenfunctions generally requires at
least $\bO(nm)$ operations, and is thus the bottleneck relative to the
factorization step in practice. Therefore, we do not make any serious attempt
here to accelerate the butterfly factorization step in the MHT. When compressing
to relatively low accuracies --- for example 3 digits of relative precision ---
we find that random subsampling methods for computing interpolative
decompositions~\cite{pang2020interpolative} are generally sufficient, but their
effectiveness is generally heuristic.

\begin{algorithm2e}[t]
  \caption{Butterfly factorization} \label{alg:standard-BF}
  \include{./figures/standard-BF-algo.tex}
\end{algorithm2e}

\begin{remark}
  By arranging the transfer matrices $\mtx{R}_{\tau\nu}$ at each level into a
  matrix, the butterfly factorization can be equivalently written as a product
  of $\bO(\log m)$ sparse matrices each with $\bO(r^2 m)$ entries. This can be a
  useful viewpoint for various computational tasks. However, we find the
  sparsity structure of these factor matrices to be less intuitive for our
  purposes, and thus focus on the tree-based construction given above.
\end{remark}

\section{Fiedler trees}

In order to compute a butterfly factorization of the MHT, we must first
construct a tree on the row indices of $\mtx{\Phi}$ which represent points in
the spatial domain $\M$, and a tree on the column indices of $\mtx{\Phi}$ which
represent eigenvalues in the interval $[0, \lambda_m]$. Forming a binary tree on
the interval $[0, \lambda_m]$ is straightforward. However, building a suitable
tree on the manifold $\M$ is more subtle. In some simple cases like the sphere,
torus, or deformations thereof, one can use an underlying two-dimensional
parameterization of the surface to build a tree whose elements at each level
have approximately equal areas. 

For general surfaces, one straightforward approach is to subdivide $\M$
according to an octree in the ambient space. For simple convex domains this
approach may be sufficient in practice. However, it often results in octree
boxes containing submanifolds of $\M$ which are close in the extrinsic Euclidean
metric but may be arbitrarily far apart in geodesic distance. As a result,
eigenfunctions $\phi_k$ of similar orders $k$ within a given octree box may have
highly dissimilar structures, reducing compressibility and causing the block
ranks within the butterfly factorization to grow unnecessarily quickly.

To avoid such issues, we instead use a \textit{Fiedler
tree}~\cite{szlam2005diffusion, berger2010fiedler}, which respects the intrinsic
geometry of $\M$ relevant to the MHT. The construction of a Fiedler tree takes
advantage of two essential observations. First, by the Courant Nodal Domain
Theorem~\cite{chavel1984eigenvalues}, the second eigenfunction~$\phi_2$ of the
Laplace-Beltrami operator divides $\M$ into exactly two nodal domains --- one on
which $\phi_2 \leq 0$ and one on which $\phi_2 > 0$. If $\M$ is a manifold with
boundary, then this fact holds for any suitable choice of boundary conditions,
e.g. Dirichlet, Neumann, or mixed. In addition, partitioning a manifold
according to the value of $\phi_2$ provides an approximation to the cut of~$\M$
into submanifolds which minimizes the ratio of the length of the cut to the
volume of each of the resulting submanifolds~\cite{cheeger1970lower,
fiedler1975property, shi2000normalized}. In other words, $\phi_2$ divides $\M$
into two pieces with approximately equal surface areas and little connection
between the pieces. 

The Fiedler tree applies these observations recursively --- we compute $\phi_2$
on the original manifold, divide it into two submanifolds according to the sign
of $\phi_2$, compute a new eigenfunction $\phi_2$ on each of the submanifolds,
divide each one into two new submanfiolds using the sign of their respective
eigenfunctions $\phi_2$, and so on. Repeating this partitioning process until
$\bO(1)$ points $\bm{x}_j$ lie in each submanifold yields a Fiedler tree on
$\M$.

\begin{figure}[t]
  \centering
  \includegraphics[width=0.8\textwidth]{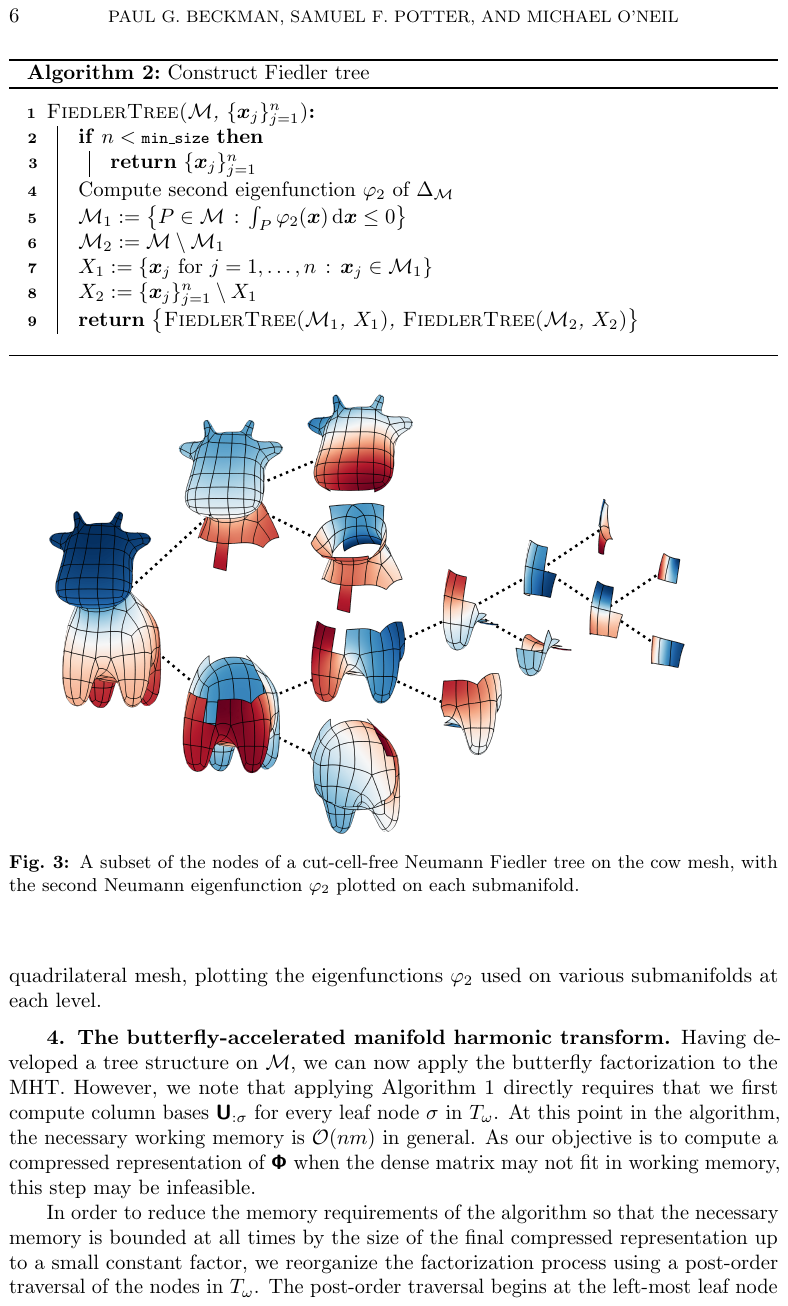}
  \caption{A subset of the nodes of a cut-cell-free Neumann Fiedler tree on the
  cow mesh, with the second Neumann eigenfunction $\phi_2$ plotted on each
  submanifold.}
  \label{fig:fiedler}
\end{figure}

\begin{algorithm2e}[t]
  \caption{Construct Fiedler tree} \label{alg:fiedler}
  \include{./figures/fiedler-algo.tex}
\end{algorithm2e}

Several technical choices must now be made. First, we must choose a boundary
condition to enforce in the eigenproblem on each submanifold. For this, we use a
homogeneous Neumann condition and solve
\begin{align} \label{eq:neumann-eigenproblem}
  \left\{
  \begin{aligned}
    \LBO \phi &= \lambda \phi, & \quad &\text{in } \M, \\
    \pder{\phi}{n} &= 0 & \quad &\text{on } \partial\M.
  \end{aligned}
  \right.
\end{align}
Other choices of boundary condition are possible and will result in slightly
different trees. Next, we must determine how to partition $\M$ given $\phi_2$.
In~\cite{berger2010fiedler}, the authors use a linear approximation to the level
set $\phi_2 = 0$ on each triangular patch $P$, introducing new cut cells into a
linear finite element discretization of $\M$. However, because we require only a
rough partitioning of $\M$ and wish to remain agnostic to the surface
discretization method, we forgo cut cells and instead subdivide $\M$ according
to the sign of the integral $\int_P \phi_2(\bm{x}) \dif{\bm{x}}$ over each
element $P$. This approach is simple to implement, and can be used regardless of
whether the elements $P$ are low- or high-order, triangular or quadrilateral. If
$\M$ is instead defined in an implicit or \textit{mesh-free} way as the
collection of points $\{\bm{x}_j\}_{j=1}^n$, and the eigenfunctions are computed
using e.g. a kernel-based approach, then one can simply partition $\M$ by the
sign of $\phi_2(\bm{x}_j)$ at each point. Algorithm~\ref{alg:fiedler} provides
pseudocode for the recursive construction of a Fiedler tree, and
Figure~\ref{fig:fiedler} illustrates the partitioning process on an example
high-order quadrilateral mesh. In this figure, we plot the
eigenfunctions~$\phi_2$ used on various submanifolds at each level.

\section{The butterfly-accelerated manifold harmonic transform}

Having developed a tree structure on $\M$, we now have all the prerequisite
tools to apply the butterfly factorization to the MHT. However, we note that
applying Algorithm~\ref{alg:standard-BF} directly requires that we first compute
column bases $\mtx{U}_{:\nu}$ for every leaf node $\nu$ in $T_\lambda$. At this
point in the algorithm, the necessary working memory is $\bO(nm)$ in general. As
our objective is to compute a compressed representation of $\mtx{\Phi}$ for
cases when the dense matrix may not fit in working memory, this step may be
infeasible.

In order to reduce the memory requirements of the algorithm so that the
necessary memory is bounded at all times by the size of the final compressed
representation up to a small constant factor, we reorganize the factorization
process using a post-order traversal of the nodes in $T_\lambda$ as suggested
in~\cite{tygert2010fast}. The post-order traversal begins at the left-most leaf
node and proceeds in a ``left, right, parent'' fashion. We can thus compute a
small number of Laplace-Beltrami eigenfunctions $\phi_k$ corresponding to a leaf
node in the frequency tree $T_\lambda$, and compress this block of
eigenfunctions as much as possible before computing the next set of columns.
Algorithm~\ref{alg:streaming-BF} and Figure~\ref{fig:streaming-BF} provide
pseudocode and a graphical representation for this streaming BF-MHT algorithm.

\begin{algorithm2e}[p]
  \caption{Streaming butterfly factorization} \label{alg:streaming-BF}
  \include{./figures/streaming-BF-algo.tex}
\end{algorithm2e}

\begin{figure}[p]
  \centering
  \input{./figures/streaming-BF-tikz.tex}
  \caption{A schematic depiction of the streaming butterfly factorization. The
    nodes of the column (frequency) tree are traversed using a post-order
    traversal. When leaf nodes are visited, new bands of eigenvectors are
    streamed. When an internal node is visited, the partial factors
    corresponding to that node's children are merged and split. Eight steps of
    the algorithm are shown in sequence, with the block boxes indicating the
    order of the steps.  Note that the nodes in the row and column trees being
    visited at each step are marked in the corresponding trees.}
    \label{fig:streaming-BF}
\end{figure}

\section{Rank and complexity analysis} \label{sec:rank-analysis}

We now establish asymptotic storage and complexity results for our algorithm to
illuminate various aspects of the BF-MHT's performance which will be observed in
numerical examples and applications in the following sections. Analyzing the
rank of blocks of the manifold harmonic transform in the general case is
challenging because the eigenfunctions and eigenvalues are not known in closed
form. However, as we saw in Section~\ref{sec:intro}, for the flat torus $\M =
[-\pi, \pi]^2$ the manifold harmonic transform is exactly the two-dimensional
Fourier transform with eigenvalues $\lambda_k = k_1^2 + k_2^2$ and
eigenfunctions $\phi_k(\bm{x}) = e^{i(k_1x_1 + k_2x_2)}$ for $k_1,k_2 \in
\mathbb{Z}$. Therefore, we develop here explicit bounds on the $\epsilon$-rank
of blocks of the two-dimensional Fourier transform matrix. We will see that
these results agree with numerical experiments on other manifolds, and we thus
conjecture that they extend to the general case.

In order to move from analysis of the continuous Fourier kernel to numerical
ranks of discrete matrices, we require the following definition of the
$\epsilon$-rank of a kernel function $K$. 

\begin{definition}
  A bivariate function $K : D_x \times D_\omega \to \C$ is said to have
  $\epsilon$-rank $r_\epsilon$ if $r_\epsilon$ is the smallest integer for which
  there exist functions $u_\ell : D_x \to \C$ and $v_\ell : D_\omega \to \C$
  such that
  \begin{align}
    \abs{K(\bm{x}, \bm{\omega}) - \sum_{\ell=1}^{r_\epsilon} u_\ell(\bm{x}) \overline{v_\ell(\bm{\omega})}} \leq \epsilon
  \end{align}
  for all $\bm{x} \in D_x, \bm{\omega} \in D_\omega$.
\end{definition}

This definition implies that for any two sets of points $\{\bm{x}_j\}_{j=1}^n
\subset D_x$ and $\{\bm{\omega}_k\}_{k=1}^m \subset D_\omega$, the kernel matrix
$\mtx{K}_{jk} = K(\bm{x}_j, \bm{\omega}_k) \in \C^{n \times m}$ admits a rank
$r_\epsilon$ factorization such that $\norm[\text{max}]{\mtx{K} -
\mtx{U}\mtx{V}^*} \leq \epsilon$ with $\mtx{U} \in \C^{n \times r_\epsilon}$ and
$\mtx{V} \in \C^{m \times r_\epsilon}$. This choice of norm dramatically
simplifies the analysis that follows, but the results presented here could be
extended to other choices of error norms. 

We continue by introducing a simple bound which will be used extensively in the
proofs that follow.

\begin{lemma} \label{lem:stirling} Take $k \in \Z$ with $k > 0$ and $x \in \R$
  with $x > 0$. Then we have
  \begin{align}
      \abs{J_k(x)}
      \leq \frac{(x/2)^k}{k!}
      \leq e^{\xi} \left(\frac{x}{2\xi}\right)^k,
  \end{align}
  where~$J_k$ is the Bessel function of the first kind of order~$k$. For any
  $\xi > 0$. In particular, taking $\xi = \frac{ex}{2}$ gives
  \begin{equation}
    \abs{J_k(x)} \leq e^{\frac{ex}{2}-k}.
  \end{equation}
\end{lemma}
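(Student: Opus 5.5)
We prove the two inequalities separately; the last display then follows by substitution.

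\emph{First inequality.} We use the integer-order Bessel integral representation
\begin{align}
  J_k(x) = \frac{1}{\pi}\int_0^\pi \cos(k\theta - x\sin\theta)\,\mathrm{d}\theta .
\end{align}
This only gives $\abs{J_k(x)}\le 1$, so instead we use the Poisson-type representation, valid for $k > -\tfrac12$:
\begin{align}
  J_k(x) = \frac{(x/2)^k}{\Gamma(k+\tfrac12)\Gamma(\tfrac12)}\int_{-1}^{1}(1-t^2)^{k-1/2}\cos(xt)\,\mathrm{d}t .
\end{align}
Bounding $\abs{\cos(xt)}\le 1$ leaves the Beta integral
\begin{align}
  \int_{-1}^1(1-t^2)^{k-1/2}\,\mathrm{d}t = B\big(\tfrac12,k+\tfrac12\big) = \frac{\Gamma(\tfrac12)\Gamma(k+\tfrac12)}{\Gamma(k+1)} .
\end{align}
The Gamma factors cancel, giving exactly $\abs{J_k(x)}\le (x/2)^k/k!$.

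An alternative route is termwise. In the series $J_k(x)=\sum_{j\ge0}\frac{(-1)^j (x/2)^{2j+k}}{j!(j+k)!}$, one would compare against $(x/2)^k/k!$. That is more delicate for large $x$, so the integral representation is the cleaner path, and I would present that one.

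\emph{Second inequality.} We need $\frac{1}{k!}\le e^{\xi}\xi^{-k}$ for every $\xi>0$, which is immediate from the exponential series:
\begin{align}
  e^{\xi}=\sum_{j\ge 0}\frac{\xi^j}{j!}\ge \frac{\xi^k}{k!}.
\end{align}
Multiplying by $(x/2)^k\xi^{-k}$ gives $\frac{(x/2)^k}{k!}\le e^{\xi}\big(\frac{x}{2\xi}\big)^k$.

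\emph{Special case.} Substituting $\xi = ex/2$ gives $\frac{x}{2\xi}=e^{-1}$, so the bound becomes $e^{ex/2}e^{-k}=e^{\frac{ex}{2}-k}$.

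\emph{Main obstacle.} The only nontrivial step is the first inequality, which must hold uniformly for all $x>0$ rather than merely asymptotically. Citing the Poisson integral (e.g.\ from Watson or DLMF 10.9.4) together with the Beta-function evaluation disposes of it cleanly.
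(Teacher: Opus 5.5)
Your proof is correct, and it differs from the paper's only in the second inequality.

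For the first inequality the paper simply cites DLMF~10.14.4. Your argument via Poisson's integral and the Beta-function evaluation $B(\tfrac12,k+\tfrac12)=\Gamma(\tfrac12)\Gamma(k+\tfrac12)/\Gamma(k+1)$ is the standard derivation of that bound, so here you are unpacking the same citation rather than taking a different route.

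For the second inequality the paper appeals to Stirling's formula. Read literally, that is an asymptotic statement. Even its non-asymptotic form $k!\ge\sqrt{2\pi k}\,(k/e)^k$ only gives the case $\xi=k$ directly. To reach every $\xi>0$ one must also note that $\xi\mapsto\xi^k e^{-\xi}$ is maximized at $\xi=k$. Your bound $e^{\xi}\ge\xi^k/k!$, taken from a single term of the exponential series, gives $1/k!\le e^{\xi}\xi^{-k}$ for all $\xi>0$ in one line, with no asymptotics and no optimization step. It therefore matches the lemma exactly as stated and is more elementary.

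The Stirling route buys an extra factor $(2\pi k)^{-1/2}$ in the bound, which the lemma discards anyway. The specialization $\xi=ex/2$, giving $x/(2\xi)=e^{-1}$, is the same in both.
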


\begin{proof}
  The first inequality is~\cite[10.14.4]{olver2010nist}, and Stirling's formula
  gives the second.
\end{proof}

Consider the Fourier kernel~$K(\bm{x}, \bm{\omega}) = e^{i\bm{\omega}\cdot
\bm{x}}$ in two dimensions. Given access to the two-dimensional frequencies
$\bm{\omega}$, one could build a butterfly factorization for the two-dimensional
Fourier transform using quadtrees in space and frequency. The following theorem
demonstrates that the rank of the two-dimensional Fourier kernel from
frequencies $\bm{\omega} \in B_b$ to points $\bm{x} \in B_R$ is of size
$r_\epsilon = \bO(b^2 R^2)$, and thus the $\epsilon$-ranks of all compressed
blocks remain bounded by a fixed $r$ at all levels, resulting in an $\bO(r^2
n\log n)$ algorithm.

\begin{theorem} \label{thm:2D-fourier-rank} The $\epsilon$-rank of the Fourier
  kernel $K(\bm{x}, \bm{\omega}) = e^{i\bm{\omega}\cdot \bm{x}}$ for
  $\bm{\omega} \in B_b$ and $\bm{x} \in B_R$ is bounded by 
  \begin{align}
    r_\epsilon \leq \frac{1}{2} \clg{eb R + \log 2 + \log(\epsilon^{-1})}^2.
  \end{align}
\end{theorem}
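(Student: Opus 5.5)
The plan is to reduce the two-dimensional kernel to a product of one-dimensional Jacobi--Anger (Chebyshev) expansions. I will then truncate the resulting double series by \emph{total degree} and bound the discarded tail using Lemma~\ref{lem:stirling}. Throughout I take $B_b$ and $B_R$ to be the Euclidean disks of radius $b$ and $R$ centred at the origin.

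\textbf{Step 1: a separable expansion in each coordinate.} Write $e^{i\bm{\omega}\cdot\bm{x}} = e^{i\omega_1x_1}e^{i\omega_2x_2}$. For each factor, set $\omega_j = b s_j$ with $s_j\in[-1,1]$, and use the Jacobi--Anger/Chebyshev identity
\begin{align}
  e^{i c s} = \sum_{k\ge 0} \varepsilon_k\, i^k J_k(c)\, T_k(s), \qquad \varepsilon_0=1,\ \varepsilon_k=2 \ (k\ge1),
\end{align}
with $c = b x_j$. Each term $J_k(bx_j)\,T_k(\omega_j/b)$ is a product of a function of $\bm{x}$ and a function of $\bm{\omega}$. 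Multiplying the two series gives
\begin{align}
  e^{i\bm{\omega}\cdot\bm{x}} = \sum_{k_1,k_2\ge0}\varepsilon_{k_1}\varepsilon_{k_2}\, i^{k_1+k_2}\, J_{k_1}(bx_1)J_{k_2}(bx_2)\, T_{k_1}(\omega_1/b)\,T_{k_2}(\omega_2/b).
\end{align}
Every term in this double sum is separable, so any finite partial sum is a low-rank approximation of $K$.

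\textbf{Step 2: truncation by total degree.} Keep only the terms with $k_1+k_2\le p-1$. There are $p(p+1)/2$ such terms, and this is what produces the factor $\tfrac12$ in the statement. Since $|T_k|\le1$ on $[-1,1]$, the error is at most
\begin{align}
  \sum_{N\ge p}\ \sum_{k_1+k_2=N} 4\,|J_{k_1}(b|x_1|)|\,|J_{k_2}(b|x_2|)|.
\end{align}
Apply the first inequality of Lemma~\ref{lem:stirling}, $|J_k(y)|\le (y/2)^k/k!$, and sum over $k_1+k_2=N$ with the binomial theorem. The inner sum is bounded by
\begin{align}
  \frac{4}{N!}\Big(\frac{b(|x_1|+|x_2|)}{2}\Big)^N.
\end{align}
Use $|x_1|+|x_2|\le\sqrt2\,|\bm{x}|\le\sqrt2 R$; alternatively, to avoid the $\sqrt2$, rotate coordinates or use the disk geometry more carefully. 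This reduces the tail to a one-dimensional series of the form $\sum_{N\ge p} C\,(\beta bR)^N/N!$ for a modest constant $\beta$.

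\textbf{Step 3: bounding the tail and choosing $p$.} Apply the second inequality of Lemma~\ref{lem:stirling} with $\xi$ chosen so that successive terms decay geometrically. For example, take $\xi$ proportional to $ebR$ so that each term is at most $e^{-N}$ times a constant. Summing the geometric series gives an error of order $2e^{-(p-ebR)}$. Requiring this to be at most $\epsilon$ forces $p \ge ebR+\log 2+\log(\epsilon^{-1})$, so I take $p=\lceil ebR+\log2+\log\epsilon^{-1}\rceil$. The number of retained terms is then $p(p+1)/2$, which should be compared against the claimed bound $\tfrac12 p^2$.

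\textbf{Main obstacle.} The difficulty is the constant bookkeeping. The term count $p(p+1)/2$ is slightly larger than $p^2/2$. The factors $\varepsilon_{k_1}\varepsilon_{k_2}\le4$ and the $\sqrt2$ from $|x_1|+|x_2|$ also inflate the tail. To land exactly on $\tfrac12\lceil ebR+\log2+\log\epsilon^{-1}\rceil^2$, I would try two things:
\begin{itemize}
  \item Drop the degree-$(p-1)$ diagonal, which reduces the count to $p(p-1)/2\le p^2/2$.
  \item Absorb that one lost degree and the extra constants into the slack between $ebR$ and the sharper exponent that the choice of $\xi$ in Lemma~\ref{lem:stirling} actually delivers.
\end{itemize}
If that fails, the fallback is a parity or symmetry argument, using the fact that $J_k$ and $T_k$ have parity $k$, to halve the count. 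Getting the exponents and constants to match the stated form exactly is the step I expect to need the most care.
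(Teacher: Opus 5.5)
Your route is sound and genuinely different from the paper's. The paper works in polar coordinates. It expands $e^{i\rho r\cos(\alpha-\theta)}$ in angular modes $c_\ell J_\ell(\rho r)(\cos\ell\alpha\cos\ell\theta+\sin\ell\alpha\sin\ell\theta)$ and separates $J_\ell(\rho r)$ in $\rho$ and $r$ with Neumann's multiplication theorem. It then bounds each resulting term by $2e^{ebR-\ell-2k}$, applying Lemma~\ref{lem:stirling} separately to $(\ell+k)!$ and $k!$. Finally it counts the pairs with $\ell+2k<ebR+\log 2+\log\epsilon^{-1}$, doubling for the cosine/sine split.

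Your tensor-product Jacobi--Anger expansion needs no multiplication theorem and no case distinction in $\rho$. The binomial collapse over $k_1+k_2=N$ is also tighter than the paper's two separate Stirling bounds: the degree-$N$ shell is at most $4e^{(e/\sqrt2)bR-N}$. So the $\sqrt2$ you list as an obstacle is actually a gain over the target's $ebR$. Rotating cannot remove it, since $\max_{B_R}(\abs{x_1}+\abs{x_2})=\sqrt2R$ in every frame, but nothing needs removing. You also bound the summed tail, which the max-norm definition of $\epsilon$-rank requires; the paper only discards terms that are individually below $\epsilon$. What the polar form buys the paper is reuse: the split into an angular mode times a radial Bessel factor is what Theorem~\ref{thm:MHT-fourier-rank} builds on for annuli, and a Cartesian expansion cannot resolve annular frequency sets.

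The unfinished part is Step~3, and the argument sketched there does not close. Put $P:=\lceil ebR+\log2+\log\epsilon^{-1}\rceil$.
Taking $p=P$ retains $P(P+1)/2>\frac12P^2$ terms. The tail ``of order $2e^{-(p-ebR)}$'' is also not what Steps~1--2 produce.

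Your first remedy, $p=P-1$ (total degree at most $P-2$, hence $\binom{P}{2}\le\frac12P^2$ terms), is the right count. But then only $\epsilon\ge 2e^{ebR-p-1}$ is available, and a fixed $\xi\propto bR$ in Lemma~\ref{lem:stirling} cannot pay for the dropped degree. With $\xi=ebR/\sqrt2$, your summed tail is $\frac{4}{1-e^{-1}}e^{(e/\sqrt2)bR-p}$. This is guaranteed to be at most $\epsilon$ only once $(1-\frac{1}{\sqrt2})ebR\gtrsim 2.15$.

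Keep the factorial instead. Set $y:=bR/\sqrt2$ and assume $\epsilon\le1$; then $y/(p+1)<1/(\sqrt2 e)$. Maximizing $y^pe^{-\sqrt2 ey}$ at $y=p/(\sqrt2e)$ gives
\begin{align}
  \frac{4}{\epsilon}\sum_{N\ge p}\frac{y^N}{N!}
  \le \frac{4}{1-\frac{1}{\sqrt2 e}}\,\frac{y^p}{p!}\cdot\frac{e^{p+1-\sqrt2 ey}}{2}
  \le \frac{2e}{1-\frac{1}{\sqrt2 e}}\cdot\frac{p^p}{(\sqrt2 e)^p\,p!}.
\end{align}
The right-hand side is $\approx 0.994$ at $p=2$ and shrinks by a factor below $1/\sqrt2$ per step. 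So the statement holds whenever $P\ge 3$.

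The small cases must be handled separately:
\begin{enumerate}
\item For $P=2$ and $\epsilon<1$, one checks $bR<2e^{ebR-2}\le\epsilon$, so the constant function already suffices.
\item For $P=1$ and $\epsilon<1$, the stated bound forces $r_\epsilon=0$. This is false because $\abs{K}\equiv 1$, so no bookkeeping reaches that corner.
\end{enumerate}
The parity fallback is unnecessary.
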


\begin{proof}
  Writing $\bm{\omega} \mapsto (\rho, \alpha)$ and $\bm{x} \mapsto (r, \theta)$
  in polar coordinates gives
  \begin{align}
      K(\bm{x}, \bm{\omega}) 
      &:= e^{i\rho r\cos(\alpha - \theta)}.
  \end{align}
  Using \cite[Lemma 3.2]{oneil2007transform}, we obtain
  \begin{align} \label{eq:bessel-low-rank}
      K(\bm{x}, \bm{\omega}) 
      & = \sum_{\ell=0}^\infty c_\ell J_\ell(\rho r) \Big( \cos(\ell\alpha) \cos(\ell\theta) + \sin(\ell\alpha) \sin(\ell\theta) \Big)
  \end{align}
  where $c_0 := 1$ and $c_\ell := 2i^\ell$ for all $\ell > 0$. Applying
  \cite[10.23.2]{olver2010nist} then gives the expansion
  \begin{multline}
    \label{eq:fourier-ball-expansion}
    K(\bm{x}, \bm{\omega}) 
    =\\
    \sum_{\ell=0}^\infty c_\ell \Big( \cos(\ell\alpha) \cos(\ell\theta) + \sin(\ell\alpha) \sin(\ell\theta) \Big) \rho^\ell \sum_{k=0}^\infty  \frac{(-1)^k (\rho^2 - 1)^k (\frac{r}{2})^k}{k!} J_{\ell+k}(r),
  \end{multline}
  which is separable into products of terms involving $(r, \theta)$ and $(\rho,
  \alpha)$ respectively. Assume, without loss of generality, that $\rho > 1$.
  Then using the fact that~\mbox{$\abs{J_k(x)} \leq (x/2)^k/k!$},
  we have
  \begin{multline}
    \label{eq:mult-theorem-bound}
    \abs{c_\ell \cos(\ell\alpha) \cos(\ell\theta) \rho^\ell \frac{(-1)^k (\rho^2 - 1)^k (\frac{x}{2})^k}{k!} J_{\ell+k}(r)} \\
    \begin{aligned}
      &\leq \frac{2}{k!} \rho^\ell (\rho^2 - 1)^k \left(\frac{r}{2}\right)^k
      \abs{J_{\ell+k}(r)}\\ 
      &\leq \frac{2}{(\ell+k)! k!} \left(\frac{\rho r}{2}\right)^{\ell + 2k},
    \end{aligned}
  \end{multline}
  with a similar bound for the sine term. Applying Lemma~\ref{lem:stirling} and
  the fact that $\rho \leq b$, we have
  \begin{align}
    \frac{2}{(\ell+k)! k!} \left(\frac{\rho r}{2}\right)^{\ell + 2k}
    \leq 2e^{eb R - \ell - 2k}.
  \end{align}
  Assuring this expression is bounded by $\epsilon$, we obtain
  \begin{align}
    \ell + 2k < eb R + \log 2 + \log(\epsilon^{-1}).
  \end{align}
  Counting the number of integer pairs $(k, \ell)$ satisfying the above, and
  taking into account that each $\ell$ contributes two terms
  in~\eqref{eq:fourier-ball-expansion} --- one with cosines and one with sines
  --- gives the result.
\end{proof}

In contrast, for the general MHT no such two-dimensional frequency $\bm{\omega}$
is well-defined. Instead, we have access only to the linear ordering of
eigenvalues $\lambda_k$ of the Laplace-Beltrami operator. We now consider the
two-dimensional Fourier transform as a special case of the MHT, and study the
rank of the Fourier kernel from eigenvalues with $\sqrt{\lambda} :=
\norm{\omega} \in [a,b]$ to points $\bm{x} \in B_R$. This corresponds to
considering frequencies in the annulus $\bm{\omega} \in A_a^b := \{\bm{\omega} \
: \ a \leq \norm{\bm{\omega}} \leq b\}$.

One-dimensional intuition and the result of Theorem~\ref{thm:2D-fourier-rank}
that the $\epsilon$-rank of the Fourier kernel from $\bm{\omega} \in B_b$ to
$\bm{x} \in B_R$ scales like $r_\epsilon = \bO\big(\vol(B_b) \vol(B_R)\big) =
\bO(b^2 R^2)$ both suggest that for $\bm{\omega} \in A_a^b$, we should have
\begin{equation}
  r_\epsilon = \bO\big(\vol(A_a^b) \vol(B_R)\big) = \bO((b^2 - a^2)R^2).
\end{equation}
In other words, the rank of the Fourier kernel should depend only on the
\textit{phase space volume}, i.e. the product of the volumes of the physical and
frequency domains. However, this turns out to be far from the case, which we
will see is the driving factor in the formal computational complexity of the
MHT.

Consider first the case of $\norm{\bm{\omega}} = b$. Then the Jacobi-Anger
expansion~\cite{olver2010nist} gives the Fourier coefficients of the Fourier
kernel explicitly
\begin{align}
  K(\bm{x}, \bm{\omega})
  &= e^{ibr\cos(\alpha-\theta)}
  = \sum_{k=-\infty}^\infty i^k J_k(br) e^{ik(\alpha - \theta)}.
\end{align}
As $\abs{J_k(bR)}$ is $\bO(1)$ for $k < bR$ and the Fourier bases are orthogonal
in $L^2([0,2\pi])$, one cannot generally hope to obtain an expansion of $K$ with
$\epsilon$-rank less than $bR$ which is valid for $\bm{\omega}$ in any domain
containing the circle of radius $b$. In particular, this implies that the
$\epsilon$-rank of the Fourier kernel between an annulus $\bm{\omega} \in A_a^b$
and a disk $\bm{x} \in B_R$ must grow at least as fast as the outer radius $b$
of the annulus, \textit{even} if the phase space volume is held constant. 

Before providing a bound on the rank of the annulus-to-disk Fourier kernel, we
derive an explicit formula for the Chebyshev expansion coefficients of the
Bessel function $J_k(\rho r)$ for $\rho$ in any interval $[a,b]$ in terms of
sums of products of Bessel functions. This expansion is central to the proof
that follows, and is of independent interest in other applications which require
a local approximation of the Bessel function~$J_k$.

\begin{lemma}
  \label{thm:bessel-chebyshev}
  Take $k \in \Z$ with $k \geq 0$. Then for all $r > 0$ and $a \leq \rho \leq
  b$,
  \begin{equation}
    \label{eq:bessel-chebyshev}
    J_k(\rho r) 
    = \frac{c_{k0}(r)}{2} + \sum_{\ell=1}^\infty c_{k\ell}(r)
    T_\ell\left(\textstyle\frac{2}{b-a}(\rho - a) - 1\right)
  \end{equation}
  where
  \begin{equation}
    c_{k\ell}(r)
    = \sum_{q=0}^\infty \left. \begin{cases}
      c_{k\ell}^{2q}(r) & \ell \ \text{even} \\
      c_{k\ell}^{2q+1}(r) & \ell \ \text{odd}
    \end{cases} \right\},
\end{equation}
and
\begin{multline}
    c_{k\ell}^{p}(r)
    = \\ 2\eta_p J_{\frac{p+\ell}{2}}\left( {\textstyle\frac{(b-a)r}{4}} \right)
    J_{\frac{p-\ell}{2}} \left( {\textstyle\frac{(b-a)r}{4}} \right) \bigg[
    J_{k-p} \left( {\textstyle\frac{(b+a)r}{2}} \right) 
    + (-1)^p J_{k+p} \left( {\textstyle\frac{(b+a)r}{2}} \right) \bigg]
  \end{multline}
  with $\eta_0 := \frac{1}{2}$ and $\eta_p := 1$ for all $p \neq 0$.
\end{lemma}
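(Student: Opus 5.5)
Set $\rho = \mu + \delta t$ with $\mu := \frac{b+a}{2}$, $\delta := \frac{b-a}{2}$ and $t := \frac{2}{b-a}(\rho-a)-1 \in [-1,1]$. Then $J_k(\rho r) = J_k(\mu r + \delta r t)$, and I would proceed in three steps.

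\textbf{Step 1: addition theorem.} Apply the Graf/Neumann addition theorem \cite[10.23.2 or 10.23.7]{olver2010nist}, $J_k(u+v) = \sum_{p=-\infty}^{\infty} J_{k-p}(u) J_p(v)$, with $u = \mu r$ and $v = \delta r t$. Pair the terms $p$ and $-p$, using $J_{-p}(v) = (-1)^p J_p(v)$. This gives
\begin{equation*}
  J_k(\rho r) = \sum_{p=0}^\infty 2\eta_p \Big[J_{k-p}(\mu r) + (-1)^p J_{k+p}(\mu r)\Big] J_p(\delta r t),
\end{equation*}
where $\eta_0 = \tfrac12$ compensates for the $p=0$ term being counted twice. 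This accounts exactly for the bracket in $c^p_{k\ell}$, with argument $\frac{(b+a)r}{2}$.

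\textbf{Step 2: Chebyshev expansion of $J_p(zt)$.} Expand $J_p(zt)$, $z = \delta r$, in Chebyshev polynomials of $t$. The key identity, which I believe is classical (Luke; also \cite[10.23]{olver2010nist}), is
\begin{equation*}
  J_p(zt) = \sideset{}{'}\sum_{\ell \equiv p \ (2)} \epsilon_\ell\, J_{\frac{p+\ell}{2}}(z/2)\, J_{\frac{p-\ell}{2}}(z/2)\, T_\ell(t),
\end{equation*}
with a suitable normalization $\epsilon_\ell$ (a factor of 2 for $\ell>0$, absorbed into the "$c_{k0}/2$" convention). To derive it, set $t = \cos\theta$ and use $J_p(z\cos\theta) = \frac{1}{2\pi}\int_{-\pi}^{\pi} e^{i(z\cos\theta\sin\phi - p\phi)}\,d\phi$. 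Then write $\cos\theta\sin\phi = \tfrac12[\sin(\phi+\theta) + \sin(\phi-\theta)]$ and apply Jacobi--Anger to each exponential with argument $z/2$. Integrating over $\phi$ forces $m+n = p$, leaving $\sum_{m+n=p} J_m(z/2)J_n(z/2)\,e^{i(m-n)\theta}$. Setting $\ell = m-n$ gives $m = \frac{p+\ell}{2}$, $n = \frac{p-\ell}{2}$, so only $\ell \equiv p \pmod 2$ contributes. Finally, pair $\pm\ell$ into $\cos(\ell\theta) = T_\ell(t)$; the two terms are equal after using $J_{-n} = (-1)^n J_n$ together with the parity constraint. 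This produces exactly the factors $J_{\frac{p\pm\ell}{2}}\big(\frac{(b-a)r}{4}\big)$, noting $z/2 = \frac{(b-a)r}{4}$.

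\textbf{Step 3: interchange the sums.} Substitute Step 2 into Step 1 and swap the sums over $p$ and $\ell$. For fixed $\ell$, only $p$ of the same parity as $\ell$ contribute, which gives $p = 2q$ for $\ell$ even and $p = 2q+1$ for $\ell$ odd. This is the stated $c_{k\ell}$.

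\textbf{Main obstacle.} The step I expect to be hardest is justifying the interchange of the double series and getting the factors of 2 right: $\eta_p$, the $\ell = 0$ halving, and the $\pm\ell$ pairing with negative-index Bessel functions when $\ell > p$. For absolute convergence I would use Lemma~\ref{lem:stirling}. It gives $|J_{\frac{p+\ell}{2}}(z/2)| \le (z/4)^{(p+\ell)/2}/\lfloor\tfrac{p+\ell}{2}\rfloor!$, while the remaining Bessel factors are bounded by $1$. The double sum is therefore dominated by a convergent series, so Fubini for series applies. Uniform convergence in $\rho \in [a,b]$ then follows because $|T_\ell| \le 1$.
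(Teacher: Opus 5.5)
Your route is essentially the paper's. Both use the Neumann addition theorem, the identity $\frac{2}{\pi}\int_0^\pi J_p(z\cos\tau)\cos(\ell\tau)\,d\tau = 2J_{\frac{p+\ell}{2}}(z/2)\,J_{\frac{p-\ell}{2}}(z/2)$ for $p\equiv\ell \pmod 2$, and an interchange of sums. However, Step~1 as written is wrong by a factor of two, exactly in the bookkeeping you flagged as the main risk. Pairing $p$ with $-p$ gives
\begin{align*}
  J_k(\rho r) &= J_k(\mu r)J_0(\delta r t) + \sum_{p\ge 1}\Big[J_{k-p}(\mu r) + (-1)^p J_{k+p}(\mu r)\Big] J_p(\delta r t) \\
  &= \sum_{p\ge 0} \eta_p \Big[J_{k-p}(\mu r) + (-1)^p J_{k+p}(\mu r)\Big] J_p(\delta r t),
\end{align*}
so the coefficient is $\eta_p$, not $2\eta_p$. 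Your version fails already at $t=0$ (i.e.\ $\rho=\mu$), where it reads $J_k(\mu r) = 2J_k(\mu r)$. The factor $2$ in $c^p_{k\ell} = 2\eta_p(\cdots)$ does not come from the $\pm p$ pairing. It is the Chebyshev normalization of Step~2: in the convention $\frac{d_0}{2} + \sum_{\ell\ge1} d_\ell T_\ell$, the coefficient of $T_\ell$ in $J_p(zt)$ is $d_\ell = 2J_{\frac{p+\ell}{2}}(z/2)J_{\frac{p-\ell}{2}}(z/2)$ for every $\ell \equiv p$, including $\ell=0$. Combining your Step~1 with the normalization you state in Step~2 would therefore give $4\eta_p$ in place of $2\eta_p$, i.e.\ twice the claimed $c_{k\ell}$ for every $\ell$. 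Replace $2\eta_p$ by $\eta_p$ in Step~1, and Steps~1--3 then combine to exactly the stated formula. A minor point: in Step~2 the $\pm\ell$ terms are equal simply because they correspond to $(m,n)$ and $(n,m)$ in the symmetric product $J_m(z/2)J_n(z/2)$, so no reflection formula is needed there.

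Once corrected, the difference from the paper is organizational. The paper starts from the defining integral $\frac{2}{\pi}\int_0^\pi J_k\big(\frac{(b-a)r}{2}\cos\tau + \frac{(b+a)r}{2}\big)\cos(\ell\tau)\,d\tau$ and inserts the addition theorem under the integral. It then evaluates each term with \cite[10.22.13]{olver2010nist} plus a symmetry argument about $\tau=\pi/2$. Because it works with the coefficient integral, the normalization comes out automatically, and the $\pm p$ pairing is left implicit. You instead substitute one series into another and re-derive the product formula from Bessel's integral and Jacobi--Anger. That makes the argument self-contained and yields the pointwise identity directly, at the cost of tracking the constants by hand. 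Your justification of the interchange, which the paper omits, is sound. For each $m=\frac{p+\ell}{2}$ there are $2m+1$ pairs $(p,\ell)$ with $p,\ell\ge 0$, each term is bounded by $4(z/4)^m/m!$, and $\sum_m (2m+1)(z/4)^m/m!<\infty$.
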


\begin{proof}
  Let $\rho = \frac{(b-a)}{2}(\cos\tau + 1) + a$. The Chebyshev coefficients are
  then given by 
  \begin{align}
    c_{k\ell}
    &= \frac{2}{\pi} \int_0^\pi J_k\left( {\textstyle\frac{(b-a)r}{2}} \cos\tau + {\textstyle\frac{(b+a)r}{2}} \right) \cos(\ell\tau) \, d\tau \\
    &= \frac{2}{\pi} \int_0^\pi \left[ \sum_{p=-\infty}^\infty J_p\left( {\textstyle\frac{(b-a)r}{2}} \cos\tau \right) J_{k-p}\left( {\textstyle\frac{(b+a)r}{2}} \right) \right] \cos(\ell\tau) \, d\tau \label{eq:cheb-coef-1}
  \end{align}
  where we have applied the Neumann addition
  formula~\cite[10.23.2]{olver2010nist}. Exchanging the order of integration and
  summation, we obtain integrals of the form
  \begin{multline}
    \int_0^\pi J_p\left( {\textstyle\frac{(b-a)r}{2}} \cos\tau \right) \cos(\ell\tau) \, d\tau
    = \\ \begin{cases}
      \pi J_{\frac{p+\ell}{2}} \left( \frac{(b-a)r}{4} \right) J_{\frac{p-\ell}{2}} \left( \frac{(b-a)r}{4} \right) & p+\ell \ \text{is even}\\
      0 & \ \text{otherwise}.
    \end{cases}
  \end{multline}
  The above equality follows by noting that the integrand is symmetric about
  $\pi/2$ when $p+\ell$ is even, in which case we
  apply~\cite[10.22.13]{olver2010nist}. If $p+\ell$ is odd, the integrand is
  antisymmetric about $\pi/2$ and thus the integral is zero. Plugging this in
  to~\eqref{eq:cheb-coef-1} gives the result.
\end{proof}

We can now use this formula for the Chebyshev coefficients of the Bessel
function to bound the rank of the Bessel kernel~$K_k(\rho, r) = J_k(\rho r)$.

\begin{lemma}
  \label{thm:bessel-rank}
  Take $k \in \Z$. The $\epsilon$-rank of $K_k(\rho, r) := J_k(\rho r)$ for $a
  \leq \rho \leq b$ and $0 \leq r \leq R$ is bounded by
  \begin{align}
    r_\epsilon 
    \leq \clg{\frac{2\xi + 2\log 4 + \log(\epsilon^{-2})}{\log\left(\frac{8\xi}{(b-a)R}\right)}}
  \end{align}
  for any $\xi > \frac{(b-a)R}{8}$.
\end{lemma}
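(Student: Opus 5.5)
The plan is to truncate the Chebyshev expansion of Lemma~\ref{thm:bessel-chebyshev} in the variable $\rho$. Each term $c_{k\ell}(r)\,T_\ell\big(\tfrac{2}{b-a}(\rho-a)-1\big)$ is a product of a function of $r$ and a function of $\rho$. So truncating after $\ell = N-1$ gives a separable approximation with $N$ terms, and it suffices to show that the tail $\sum_{\ell\ge N}\abs{c_{k\ell}(r)}$ is at most $\epsilon$ uniformly for $0\le r\le R$. Since $\abs{T_\ell}\le 1$ on $[-1,1]$, bounding the sup of the tail by the sum of the coefficient moduli is enough.

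First, I bound the individual coefficients. In $c^p_{k\ell}(r)$ we have $p\ge\ell$ (with $p\equiv\ell \bmod 2$), and the bracketed term has modulus at most $2$ because $\abs{J_n}\le 1$ for all integer $n$. For the two small-argument factors, set $s=\frac{(b-a)r}{4}\le\frac{(b-a)R}{4}$ and apply Lemma~\ref{lem:stirling} with a free parameter $\xi'$ to each factor, or more cleanly use $\abs{J_m(s)}\le (s/2)^m/m!$. With $m_1=\frac{p+\ell}{2}$ and $m_2=\frac{p-\ell}{2}$, the product is at most $(s/2)^{p}/(m_1!\,m_2!)$. Using $m_1!\,m_2!\ge$ (something like) $p!/2^p$ is too crude. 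Instead I apply the Stirling-type step $\frac{y^m}{m!}\le e^{\xi}(y/\xi)^m$ to each factor with the same $\xi$, which gives
\begin{equation}
\abs{c^p_{k\ell}(r)} \le 4\, e^{2\xi}\left(\frac{(b-a)R}{8\xi}\right)^{p}.
\end{equation}
Writing $q_0:=\frac{(b-a)R}{8\xi}<1$, which is exactly where the hypothesis $\xi>\frac{(b-a)R}{8}$ enters, summing over $p=\ell,\ell+2,\dots$ gives $\abs{c_{k\ell}(r)}\le 4e^{2\xi}q_0^{\ell}/(1-q_0^2)$. Summing over $\ell\ge N$ then gives a tail bounded by $4e^{2\xi}q_0^N/\big((1-q_0)(1-q_0^2)\big)$.

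Setting this at most $\epsilon$ and solving gives $N\log(1/q_0)\ge 2\xi+\log 4+\log(\epsilon^{-1})+\text{(geometric-series constants)}$. The stated bound has numerator $2\xi+2\log4+2\log(\epsilon^{-1})$, which is larger than what the argument needs. This suggests the authors either squared the tail estimate or absorbed the geometric-series factors $1/(1-q_0)$ into an extra $\log4+\log(\epsilon^{-1})$ (valid when, e.g., $q_0\le 1/2$ or $\epsilon$ is small). I would try to match the constant by handling the geometric factors via a crude bound of this kind, or by splitting $\epsilon$ between the $p$-sum and the $\ell$-sum.

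The main obstacle is the bookkeeping of the double sum over $(\ell,p)$. It has to be done so that the geometric factors are controlled with constants that fit under the numerator $2\log4+\log(\epsilon^{-2})$ for all admissible $\xi$, and so that the $\ell=0$ term's factor $\tfrac12$ and the $\eta_p$ weights are handled correctly. Once the tail bound is in place, taking the ceiling of $N$ yields the stated $\epsilon$-rank.
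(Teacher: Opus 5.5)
Your proposal has two genuine gaps. First, the assertion that $p\ge\ell$ in $c^p_{k\ell}(r)$ is false. In Lemma~\ref{thm:bessel-chebyshev} the sum runs over all $q\ge 0$, i.e.\ over every $p\ge0$ with $p\equiv\ell \pmod 2$. For $p<\ell$ the factor $J_{(p-\ell)/2}$ has negative order but does not vanish, since $J_{-n}=(-1)^nJ_n$; for instance $c^{0}_{k2}=-2J_1(s)^2J_k(\tfrac{(b+a)r}{2})$. For these $\lfloor \ell/2\rfloor$ terms the two orders add up to $\ell$, not $p$, so your geometric sum over $p=\ell,\ell+2,\dots$ omits them. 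The fix uses $\abs{J_{-n}}=\abs{J_n}$ and $\sum_{j}\frac{1}{j!(\ell-j)!}\le\frac{2^\ell}{\ell!}$. (The bound $m_1!\,m_2!\ge p!/2^p$ you dismissed as too crude gives exactly your two-factor estimate after renaming $\xi\mapsto 2\xi$.) With it, the omitted terms contribute at most a further $4e^{2\xi}q_0^{\ell}$, so $\abs{c_{k\ell}(r)}\le4e^{2\xi}q_0^\ell\frac{2-q_0^2}{1-q_0^2}$. As written, though, your coefficient bound is not justified.

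This is also where the paper's proof departs from yours. It bounds $J_{(p-\ell)/2}$ by $1$ and uses only $J_{(p+\ell)/2}$, whose order $q+\ell/2$ is at least $\ell/2$ for \emph{every} $p$, giving $\abs{c_{k\ell}}\le 4e^{\xi}q_0^{\ell/2}/(1-q_0)$. Solving for $\ell/2$ is what doubles $\xi+\log4+\log(\epsilon^{-1})$ in the numerator, not a squared tail estimate. Your two-factor route decays like $q_0^\ell$ and is genuinely stronger.

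Second, the constant-matching you leave as the ``main obstacle'' cannot be done pointwise in $\xi$. Your tail condition reads $N\log(1/q_0)\ge2\xi+\log4+\log(\epsilon^{-1})+\log G(q_0)$ with $G(q_0)=\frac{2-q_0^2}{(1-q_0)(1-q_0^2)}$. Here $G(q_0)\to\infty$ as $\xi\downarrow\frac{(b-a)R}{8}$, whereas the slack $\log4+\log(\epsilon^{-1})$ in the stated numerator is fixed; splitting $\epsilon$ between the two sums does not change this.

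The missing idea is a reduction in $\xi$. Write $Y(\xi)$ for the quantity inside the ceiling. Then $Y'(\xi)$ has the sign of $\log\frac{8\xi}{(b-a)R}-1-\frac{\log4+\log(\epsilon^{-1})}{\xi}$, which is negative for $\xi\le\frac{e(b-a)R}{8}$. Hence it suffices to prove the bound for $\xi\ge\frac{e(b-a)R}{8}$. There $q_0\le1/e$ and $G(q_0)\le G(1/e)<3.5<4\le4/\epsilon$ (for $\epsilon\le1$), so your extra slack absorbs the geometric factors. For smaller $\xi$ one uses $r_\epsilon\le\lceil Y(\tfrac{e(b-a)R}{8})\rceil\le\lceil Y(\xi)\rceil$.

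For what it is worth, the paper's own argument is looser at exactly this point. It only asks each $\abs{c_{k\ell}}$ to be below $\epsilon$ rather than bounding the tail, and it drops the term $-\log(1-\frac{(b-a)r}{8\xi})$, which is positive. So your instinct to sum the tail is the right one, but as written your proposal does not yet establish the stated bound for all admissible $\xi$.
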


\begin{proof}
  Assume $\ell$ is even. Applying Lemma~\ref{thm:bessel-chebyshev}, bounding the
  absolute values of the Bessel functions of orders $(p-\ell)/2$, $k-p$,
  and~$k+p$ by one, and applying Lemma~\ref{lem:stirling} to the remaining term
  yields the bound
  \begin{equation}
    \begin{aligned}
    \abs{c_{k\ell}(r)}
    &\leq 4 \sum_{q=0}^\infty \abs{J_{q+\frac{\ell}{2}}\left( \frac{(b-a)r}{4} \right)} \\
    &\leq 4 \sum_{q=0}^\infty e^\xi \left(\frac{(b-a)r}{8\xi}\right)^{q+\frac{\ell}{2}} \\
    &= \frac{4 e^\xi \left(\frac{(b-a)r}{8\xi}\right)^{\frac{\ell}{2}}}{1 - \frac{(b-a)r}{8\xi}},
  \end{aligned}
  \end{equation}
  which is less than $\epsilon$ for all
  \begin{align}
    \frac{\ell}{2} > \frac{\xi - \log\left( 1 - \frac{(b-a)r}{8\xi} \right) + \log 4 + \log(\epsilon^{-1})}{\log\left( \frac{8\xi}{(b-a)r} \right)}
  \end{align}
  for all $\xi > \frac{(b-a)r}{8}$. 
  Neglecting the second term in the numerator gives the result.
\end{proof}

We are ready to state and prove our desired rank bound on the annulus-to-disk
Fourier transform. In essence, we show that the Fourier kernel mapping
frequencies $\bm{\omega} \in A_a^b$ to points $\bm{x} \in B_R$ has
$\epsilon$-rank $\bO(bR)$ up to log factors in $(b-a)R$ if this quantity is
sufficiently small, i.e. if the annulus is sufficiently narrow.

\begin{theorem}
  \label{thm:MHT-fourier-rank}
  The $\epsilon$-rank of the Fourier kernel $K(\bm{x}, \bm{\omega}) :=
  e^{i\bm{\omega}\cdot \bm{x}}$ for~\mbox{$a \leq \norm{\bm{\omega}} \leq b$}
  and~$\bm{x} \in B_R$ is bounded by
  \begin{align}
    r_\epsilon 
    \leq \clg{\frac{5}{2} + \log(\epsilon^{-4})} \cdot \clg{\frac{e}{2}bR + \log(\epsilon^{-1})}
  \end{align}
  for any $(b-a)R < 1$.
\end{theorem}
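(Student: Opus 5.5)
Use the Jacobi--Anger expansion in polar coordinates $\bm{\omega} = (\rho,\alpha)$, $\bm{x} = (r,\theta)$:
\begin{align}
  K(\bm{x},\bm{\omega}) = \sum_{k=-\infty}^{\infty} i^k J_k(\rho r)\, e^{ik\alpha} e^{-ik\theta}.
\end{align}
This separates the angular variables exactly. The only coupling left is through the radial factor $J_k(\rho r)$ with $\rho\in[a,b]$, $r\in[0,R]$. The plan has two steps. First, truncate the angular sum to $\abs{k}\le K_0$. Second, replace each retained $J_k(\rho r)$ by a low-rank expansion in $(\rho, r)$ using Lemma~\ref{thm:bessel-rank}. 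The total rank is then at most (number of retained angular modes) $\times$ (per-mode rank). The two ceiling factors in the statement should correspond to these two counts.

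For the angular truncation, apply Lemma~\ref{lem:stirling} with $x = \rho r \le bR$. This gives $\abs{J_k(\rho r)} \le e^{ebR/2 - \abs{k}}$, using $J_{-k} = (-1)^kJ_k$. The tail $\sum_{\abs{k}>K_0}$ is therefore geometric and bounded by a constant times $e^{ebR/2-K_0}$. Choosing $K_0 = \clg{\frac{e}{2}bR + \log(\epsilon^{-1})}$ makes it $\bO(\epsilon)$, and this is the second factor. I expect the constants to need some care, probably by allotting an $\epsilon/2$-type budget to the tail and absorbing the resulting $\log$ constants into the first factor.

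For the radial part, invoke Lemma~\ref{thm:bessel-rank} for each $k$ with $(b-a)R<1$. Choose a concrete $\xi$, for instance $\xi = 1/8 \cdot e$ or similar, so that $\log\bigl(8\xi/((b-a)R)\bigr) \ge \log(8\xi) \ge 1$. A natural choice is $\xi = e/8$, which makes the logarithm at least $1$. The bound then becomes $\clg{2\xi + 2\log 4 + \log(\epsilon'^{-2})}$, where $\epsilon'$ is the per-mode tolerance. The per-mode errors, each multiplied by $\abs{i^k e^{ik(\alpha-\theta)}}=1$, must sum over $2K_0+1$ modes to at most about $\epsilon$. Taking $\epsilon'$ around $\epsilon^2$ (using $K_0 \lesssim \epsilon^{-1}$ in the regime of interest), or else handling the factor $2K_0+1$ directly, produces $\log(\epsilon^{-4})$ and a constant near $5/2$. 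This is the first factor.

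The main obstacle is bookkeeping the constants to match exactly $\frac{5}{2}+\log(\epsilon^{-4})$. Three things have to be reconciled: the choice of $\xi$, the split of the error budget between angular truncation and radial approximation, and how the $2K_0+1$ mode count is accounted for, since the stated bound has $K_0$ rather than $2K_0+1$. The last point may require pairing $\pm k$ modes, because $J_{-k}=(-1)^kJ_k$ lets them share the same radial expansion, or absorbing the factor into the $\epsilon$-dependence. I would first fix $\xi$ so the denominator in Lemma~\ref{thm:bessel-rank} is at least one, then adjust the per-mode tolerance until the constants match, tolerating a slightly looser intermediate $\epsilon$ if necessary.
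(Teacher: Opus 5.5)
Your decomposition is the paper's: Jacobi--Anger in $\alpha-\theta$, truncation in $k$ via Lemma~\ref{lem:stirling}, the Chebyshev-in-$\rho$ bound of Lemma~\ref{thm:bessel-rank} for each retained mode, and a product of the two counts. On the constants, the paper takes $\xi=1$, not $e/8$. Since $(b-a)R<1$, the denominator $\log\bigl(8/((b-a)R)\bigr)$ exceeds $\log 8>2$; the paper's ``$\log 8<2$'' should read $\log 8>2$. The numerator is below $5+2\log(1/\epsilon')$, so each mode needs at most $\clg{\frac52+\log(1/\epsilon')}$ terms. That is where the $\frac52$ comes from. With your $\xi=e/8$ the denominator is only $>1$ and the leading constant is $e/4+2\log 4\approx 3.45$. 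With $\xi=1$, the stated first factor is exactly what you get from a per-mode tolerance $\epsilon'=\epsilon^4$.

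The two devices you propose for the remaining bookkeeping do not work.
\begin{itemize}
\item The assumption $K_0\lesssim\epsilon^{-1}$ is not a hypothesis of the theorem, which allows arbitrary $bR$. Suppose you use Lemma~\ref{thm:bessel-rank} as a black box per mode and sum the $2K_0+1$ sup-norm errors by the triangle inequality. Then you need $\epsilon'\lesssim\epsilon/K_0$, which puts a $\log(bR)$ into the first factor, and no $bR$-independent $\epsilon'$ suffices.
\item Pairing $\pm k$ saves nothing. The pair equals $2i^kJ_k(\rho r)\cos k(\alpha-\theta)$, and $\cos k(\alpha-\theta)=\cos k\alpha\cos k\theta+\sin k\alpha\sin k\theta$ costs two separable terms per radial term. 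This is the same sine/cosine doubling as in the proof of Theorem~\ref{thm:2D-fourier-rank}.
\end{itemize}
The paper's proof sidesteps both issues rather than resolving them. It only counts the pairs $(k,\ell)$ whose coefficient bound exceeds $\epsilon$, effectively over one sign of $k$, and never bounds the discarded mass.

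For a rigorous version, truncate in $\ell$ for the whole kernel rather than mode by mode. The $\ell$-th Chebyshev coefficient in $\rho$ of the full kernel is $2i^\ell J_\ell\bigl(\frac{(b-a)r}{2}\cos(\alpha-\theta)\bigr)e^{i\frac{(a+b)r}{2}\cos(\alpha-\theta)}$. It is bounded uniformly by $2\bigl((b-a)R/4\bigr)^\ell/\ell!$, so the $\ell$-tail is small independently of $K_0$. Only the $L+1=\bO(\log\epsilon^{-1})$ retained coefficient functions then need angular truncation. The angular cutoff therefore grows only by $\log(L+1)+\bO(1)$ instead of $\log(bR)$. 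The factor $2$ from $\pm k$ still has to be paid. One option is to take it out of the slack between the stated $\log(\epsilon^{-4})$ and the $\bO(\log\epsilon^{-1})$ actually needed, which should work at least for small $\epsilon$.
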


\begin{proof}
  We first expand $K$ in a Fourier series in $\alpha - \theta$, giving
  \begin{align}
    K(\bm{x}, \bm{\omega})
    &= e^{i\rho r\cos(\alpha-\theta)}
    = \sum_{k=-\infty}^\infty i^k J_k(\rho r) e^{ik(\alpha - \theta)}.
  \end{align}
  Applying Lemma~\ref{thm:bessel-chebyshev} then yields
  \begin{align}
    K(\bm{x}, \bm{\omega})
    &= \sum_{k=-\infty}^\infty \sum_{\ell=0}^\infty i^k c_{k\ell}(r) T_\ell\left(\textstyle\frac{2}{b-a}(\rho - a) - 1\right) e^{ik(\alpha - \theta)}.
  \end{align} 
  Applying Lemma~\ref{lem:stirling} to $J_k(\rho r)$ gives
  \begin{align}
    \abs{c_{k\ell}(r)} 
    \leq \abs{J_k(\rho r)}
    \leq e^{\frac{e\rho r}{2} - k},
  \end{align}
  and thus $\abs{c_{k\ell}(r)} < \epsilon$ for all 
  \begin{align}
    k > \frac{e\rho r}{2} + \log(\epsilon^{-1}).
  \end{align}
  Combining this with Lemma~\ref{thm:bessel-rank} for $\xi = 1$, using the facts
  that $2 + 2\log 4 < 5$ and $\log 8 < 2$, and counting the number of pairs
  $(k,\ell)$ for which $\abs{c_{k\ell}(r)}$ may exceed $\epsilon$ gives the
  result.
\end{proof}

\begin{remark}
  With some additional calculations, the~$\log(\epsilon^{-4})$ term in the
  pre-factor in Theorem~\ref{thm:MHT-fourier-rank} could likely be reduced
  to~$\log(\epsilon^{-1})$, but the dependency on~$b$ and~$R$ is unchanged.
\end{remark}

We now employ this rank bound for each compressed block of the BF-MHT to analyze
the complexity of our algorithm applied to the flat torus.
Figure~\ref{fig:annulus-to-disk} shows a diagram of the two-dimensional Fourier
transform as a MHT and its relation to the annulus-to-disk Fourier kernel.

\begin{figure}
  \centering
  \input{./figures/annulus-to-disk-tikz.tex}
  \caption{Diagram of the two-dimensional Fourier transform as a MHT. The
  annulus $A_{\sqrt{\rho_{k-1}}}^{\sqrt{\rho_k}}$ (left) contains
  eigenfrequencies $\bm{\omega} = [k_1, k_2]$ for $k_1, k_2 \in \Z$, denoted by
  black dots. These eigenfrequencies are mapped by $\norm{\bm{\omega}}^2 =
  \lambda$ to the corresponding eigenvalues $\lambda_k$ of
  $\Delta_{[-\pi,\pi]^2}$ (center). Each block $\mtx{\Phi}(\tau,\nu)$ of the MHT
  maps coefficients of eigenfunctions with $\lambda \in [\rho_{k-1}, \rho_k]$ to
  values in a disk $\bm{x} \in B_R(\bm{c}_j)$.}
  \label{fig:annulus-to-disk}
\end{figure}

\begin{theorem} \label{thm:MHT-complexity} Let $\M = [-\pi, \pi]^2$. The BF-MHT
  mapping coefficients to a linear combination of the first $m$ Laplacian
  eigenfunctions at $n$ points is $\bO(n + m^{5/3})$ complexity to store and
  apply.
\end{theorem}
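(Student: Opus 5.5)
We bound the size of the factorization level by level, using Theorem~\ref{thm:MHT-fourier-rank} for the rank of each compressed block. The trees are chosen as follows. The space tree $T_x$ on $[-\pi,\pi]^2$ is a quadtree, or equivalently a binary tree alternating splits in $x_1$ and $x_2$. At depth $L-\ell$ its boxes have diameter comparable to $R_\ell \sim 2^{-(L-\ell)/2}$, and each box lies in a disk $B_{R_\ell}(\bm c_j)$. Translating that disk to the origin multiplies the kernel by the unimodular factor $e^{i\bm\omega\cdot\bm c_j}$, which can be absorbed into the $\bm\omega$-side functions, so ranks are unchanged. The frequency tree $T_\lambda$ splits the ordered eigenvalue list into contiguous groups of equal cardinality. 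A node at level $\ell$ therefore contains $m_\ell \sim 2^{\ell} m_0$ eigenvalues. Because the lattice-point count satisfies $\#\{\norm{\bm\omega}\le \sqrt\lambda\}\approx\pi\lambda$, such a node corresponds to an annulus $A_a^b$ with $b^2-a^2 \approx m_\ell/\pi$, and with $b \lesssim \sqrt{m}$ since $\lambda_m \sim m/\pi$. The total number of levels is fixed by $n \sim 2^{L}$ points in the space tree and by the column tree having $\bO(\log m)$ levels.

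\textbf{Case 1: narrow annuli.} Here $(b-a)R_\ell < 1$. Theorem~\ref{thm:MHT-fourier-rank} gives block rank $r_{\tau\nu} = \bO(bR_\ell)$ with $\epsilon$ fixed. Otherwise I would use the crude bound $r_{\tau\nu} \le \min(|\tau|,|\nu|)$, or Theorem~\ref{thm:2D-fourier-rank} applied on $B_b$, giving $\bO(b^2R_\ell^2)$. Since $b-a \approx (b^2-a^2)/(2b) \approx m_\ell/(2\pi b)$, the narrow condition reads $m_\ell R_\ell \lesssim b$. Along the butterfly diagonal the product $m_\ell R_\ell^2$ is essentially constant (equal to $m/n$ up to normalization), so $m_\ell R_\ell \sim (m/n)/R_\ell$. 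For high-frequency blocks with $b\sim\sqrt m$ the condition therefore holds once $R_\ell \gtrsim \sqrt{m}/n$, so most levels fall under Case 1. Low-frequency blocks, with small $b$, have small ranks anyway, because the relevant disk-based bound $\bO(b^2R^2)$ is small there.

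\textbf{Cost summation.} The storage and apply cost is the sum over levels of (number of blocks) $\times$ (transfer-matrix size $\sim r^2$), plus the leaf bases $\sum_\tau |\tau|\, r$. At level $\ell$ there are about $2^{L-\ell}\cdot m/m_\ell$ blocks, each costing $\bO((\sqrt m R_\ell)^2) = \bO(mR_\ell^2)$. This grows as $R_\ell$ grows, that is, toward the top of the space tree. The cost is controlled by choosing where to start and stop the butterfly. One truncates the frequency tree at leaves of size $m_0$ and applies the remaining top-level blocks densely, or equivalently balances the dense leaf work against the growing butterfly ranks. Optimizing the number of butterfly levels against the dense cost of the uncompressed top or bottom blocks, I expect the balance point at block sizes where $(\sqrt m R)^2 \cdot(\#\text{blocks}) \sim m^{5/3}$. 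Concretely, one equates a dense cost term like $m^2/s$ with a rank-growth term like $m\, s^{2}$ (or similar), yielding $s\sim m^{1/3}$ and total $m^{5/3}$. The $n$ term comes from the leaf bases $\mtx U_{\tau:}$ with $\bO(1)$-size leaves and bounded rank at the finest spatial level, where $R_L \sim n^{-1/2}$ and the rank is $\bO(\sqrt m/\sqrt n) = \bO(1)$ when $m\le n$.

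\textbf{Main obstacle.} The hardest step is the bookkeeping of this sum. It requires verifying the narrow-annulus hypothesis $(b-a)R<1$ uniformly across blocks, where low-frequency annuli are thick and must use the disk bound from Theorem~\ref{thm:2D-fourier-rank} instead. It also requires identifying the correct truncation level that makes the geometric sums collapse to $m^{5/3}$ rather than a worse power. I would first write $r_{\tau\nu}\lesssim \min\{bR, b^2R^2, |\tau|, |\nu|\}$, then sum $r^2$ over all blocks level by level, and check that the geometric series is dominated by the level where $bR$ transitions between regimes. That level should produce the $m^{5/3}$ term.
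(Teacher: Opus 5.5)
Your closing plan (bound each block's rank by $\min\{\bO(bR),\,|\nu|\}$, sum $r^2$ level by level, locate the dominant level) is the paper's argument. Your balance $m^2/s = ms^2$ is exactly the paper's $4^{2L-\ell}=4^{L+2\ell}$ with $s=4^\ell$. The gap is that the proposal never carries this out, and the cost paragraph replaces it with a mechanism that is not part of the BF-MHT. There is no truncation of the frequency tree, no dense application of top-level blocks, and no number of levels to optimize.

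The growth of the kernel bound $mR_\ell^2$ toward the top of the space tree is capped automatically, because a block's $\epsilon$-rank never exceeds its number of columns. That is $|\nu|$, which equals $m_\ell$ in your tree and is $\bO(4^\ell)$ by Weyl's law in the paper's. This cap must be applied to the \emph{narrow} annuli at small $\ell$, not only as the fallback for wide annuli in your Case~1. Consequently $ms^2$ is the block-size bound summed over the $\sim m$ blocks of a level, which dominates for small $\ell$. The term $m^2/s$ comes from Theorem~\ref{thm:MHT-fourier-rank} and dominates for large $\ell$; your labels have these roles reversed, and neither term is derived.

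Concretely, take $m=4^L$. The $k$-th frequency node at level $\ell$ is the annulus with $b^2=k\lambda_m/4^{L-\ell}$, and the space disks have $R=\sqrt2\pi/2^\ell$, so $bR=\bO(\sqrt k)$. The level-$\ell$ cost is then
\[
\sum_{j=1}^{4^\ell}\sum_{k=1}^{4^{L-\ell}}\min\bigl(k,\,4^{2\ell}\bigr)=\bO\bigl(\min\{4^{L+2\ell},\,4^{2L-\ell}\}\bigr).
\]
This is geometric on either side of $\ell=L/3$, where it equals $m^{5/3}$, and summing over $\ell$ gives the theorem.

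Your setup also needs repair. You pair frequency nodes of size $2^\ell m_0$ with space nodes at depth $L-\ell$. That makes $m_\ell R_\ell^2$ grow like $4^\ell$ and the number of blocks vary with $\ell$, contradicting your own constancy claim. The butterfly pairs them with space nodes at depth $\ell$ (level $L-\ell$ counted from the leaves), so every level has $\sim m$ blocks and $m_\ell R_\ell^2\sim 1$.

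The depth is set by $m$, not $n$; with your normalization the frequency leaves would hold $m_0\sim m/n<1$ eigenvalues. The space tree therefore stops at $m$ leaves of $\sim n/m$ points each. Each leaf has rank $\bO(1)$ against all $m$ frequencies (there $bR=\bO(1)$), and this is what produces the $\sum_j n_j\cdot\bO(1)=\bO(n)$ term.

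With this normalization $(b-a)R\approx\sqrt{2\pi}/(\sqrt k+\sqrt{k-1})$ at every level. So the narrow-annulus hypothesis fails only for $k=\bO(1)$, where $bR=\bO(1)$ and Theorem~\ref{thm:2D-fourier-rank} already gives $\bO(1)$ rank. The uniform verification you flag as the main obstacle is therefore essentially automatic.
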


\begin{proof}
  Assume $m \leq n$. In addition, assume that $m = 4^L$ for ease of notation. At
  each level $\ell=0,\dots,L$, for each box index~$k = 1,\dots,4^\ell$ and
  $j=1,\dots,4^{L-\ell}$, we apply a transfer matrix corresponding to
  frequency-space pairs
  \[
    (\bm{x}, \bm{\omega}) \in B_{\sqrt{2}\pi/2^\ell}(\bm{c}_j) \times A_{\sqrt{\rho_{k-1}}}^{\sqrt{\rho_k}},
  \]
  where~$\rho_k := k \lambda_m / 4^{L-\ell}$ is the upper boundary of box $k$ on
  level $\ell$ of a 4-ary tree on~$[0, \lambda_m]$ and~$\bm{c}_j$ is the center
  of box $j$ on level $L-\ell$ of a quadtree on $[-\pi,\pi]^2$.

  Take $a = \sqrt{\rho_{k-1}}$, $b = \sqrt{\rho_k}$, and $R =
  \sqrt{2}\pi/2^\ell$. Combining Theorem~\ref{thm:MHT-fourier-rank} and the fact
  that Weyl's law~\cite{chavel1984eigenvalues} gives $\lambda_m \sim m / \pi$,
  we see that the $\epsilon$-rank of block $(j, k)$ at level $\ell$ is of size
  \begin{align}
    r_\epsilon(\ell, j, k)
    = \bO\bigg(\sqrt{\frac{k \lambda_m}{4^{L-\ell}}} \frac{\sqrt{2}\pi}{2^\ell}\bigg)
    = \bO(\sqrt{k}).
  \end{align}
  At the same time, by Weyl's law the number of eigenvalues in the interval
  $[\rho_{k-1}, \rho_k]$, and therefore an upper bound on the size and rank of
  the corresponding MHT block is
  \begin{equation}
    r_\epsilon(\ell, j, k) 
    = \bO\left( \frac{k \lambda_m}{4^{L-\ell}} - \frac{(k-1) \lambda_m}{4^{L-\ell}} \right)
    = \bO(4^\ell)
  \end{equation}
  Therefore, in early levels of the factorization there are very few eigenvalues
  in each interval $[\rho_{k-1}, \rho_k]$ and thus $r_\epsilon$ is bounded by
  $\bO(4^\ell)$ for each block independent of $k$, despite the fact that the
  continuous rank of the corresponding annulus-to-disk Fourier kernel may be
  much larger. In contrast, in later levels the number of eigenvalues in each
  interval $[\rho_{k-1}, \rho_k]$ is larger than the $\bO(\sqrt{k})$ continuous
  rank of the Fourier kernel, and thus compression occurs.

  Let $p_j$ denote the parent of block $j$ at level $\ell$ in the space tree,
  and let $C_k$ denote the set of children of block $k$ in the frequency tree.
  For levels $\ell = 0, \dots, L-1$, we store and apply transfer matrices which
  map the row bases at level $\ell$ to row bases at level $\ell+1$, resulting in
  a cost at level $\ell$ of
  \begin{align}
    \sum_{j=1}^{4^\ell} \sum_{k=1}^{4^{L-\ell}} \sum_{c_k \in C_k}
    r_\epsilon(\ell, p_j, c_k) \cdot r_\epsilon(\ell+1, j, k)
    = \sum_{j=1}^{4^\ell} \sum_{k=1}^{4^{L-\ell}} r_\epsilon(\ell+1, j, k)^2,
  \end{align}
  as the two numerical ranks in the first line above are within a constant
  factor of each. Even if no compression occurs, moving one level up the
  quadtree can not increase the size of the basis by more than a factor of 16.

  For $\ell$ small, the cost to store and apply the transfer matrices at level
  $\ell$ is
  \begin{align}
    \sum_{j=1}^{4^\ell} \sum_{k=1}^{4^{L-\ell}} \bO(4^{2\ell})
    = \bO(4^{L + 2\ell}).
  \end{align}
  For $\ell$ large, we have instead
  \begin{align}
    \sum_{j=1}^{4^\ell} \sum_{k=1}^{4^{L-\ell}} \bO(k)
    = \bO(4^{2L - \ell}).
  \end{align}
  Equating these two asymptotic complexities gives that level $\ell = \bO(L/3)$
  dominates. As the costs of all other levels decay exponentially in $\ell$ away
  from this level, the overall cost to store and apply row bases at level $\ell
  = 0$ and transfer matrices at levels $\ell=1,\dots,L$ is $\bO(m^{5/3})$.

  All that remains is to determine the complexity of storing and applying column
  bases at the last level, where compressed blocks are mapped to values at
  points in physical space. Let $n_j$ be the number of points in box $j$ on the
  leaf level $L$ of a quadtree on $[-\pi,\pi]^2$. At this level $\ell = L$, we
  store and apply column basis matrices for each block row, yielding a cost of
  \begin{align}
    \sum_{j=1}^{m} \bO\Big(n_j \cdot r_\epsilon(L, j, 1)\Big) 
    &= \sum_{j=1}^{m} \bO(n_j)
    = \bO(n)
  \end{align}
  This gives the result.
\end{proof}

Despite this asymptotic result, we will see in the following section that most
relevant problem sizes lie in the pre-asymptotic regime. In this regime we
observe empirically that the~$\epsilon$-rank of the annulus-to-disk Fourier
kernel~$K : A_a^b \mapsto B_R$ grows roughly like~$(bR)^{2/3}$ rather than
like~$bR$. Using this observed rate gives the following lemma.

\begin{lemma} \label{lem:MHT-complexity-3/2} Let $\M = [-\pi, \pi]^2$. Choose
 $C_1$ and $C_2$ such that the $\epsilon$-rank of the Fourier kernel
 $K(\bm{\omega}, \bm{x}) := e^{i\bm{\omega}\cdot \bm{x}}$ for all $a \leq
 \norm{\bm{\omega}} \leq b \leq C_1$ with $(b-a)R < 1$ and $\bm{x} \in B_R$ is
 bounded by 
  \begin{align}
    r_\epsilon \leq C_2 (bR)^{2/3}.
  \end{align}
  Then the cost to store and apply BF-MHT mapping coefficients to a linear
  combination of the first $m \leq C_1^2/\pi$ Laplacian eigenfunctions at $n$
  points is bounded by $C_3\big(n + m^{3/2}\big)$ for some constant $C_3$
  independent of $a, b,$ and $R$.
\end{lemma}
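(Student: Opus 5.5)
The argument repeats the level-by-level accounting of Theorem~\ref{thm:MHT-complexity}, with the continuous rank bound of Theorem~\ref{thm:MHT-fourier-rank} replaced by the hypothesized bound $r_\epsilon \leq C_2(bR)^{2/3}$. As before, assume $m = 4^L$ and $m \leq n$, and use the 4-ary frequency tree with boundaries $\rho_k := k\lambda_m/4^{L-\ell}$ together with the quadtree on $[-\pi,\pi]^2$. At level $\ell$, block $(j,k)$ is the annulus-to-disk kernel with $a = \sqrt{\rho_{k-1}}$, $b = \sqrt{\rho_k}$ and $R = \sqrt{2}\pi/2^\ell$.

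\textbf{Hypotheses.} First I verify that the hypothesis applies to every block. Since $m \leq C_1^2/\pi$, Weyl's law gives $b \leq \sqrt{\lambda_m} \leq C_1$, up to the constant implicit in $\lambda_m \sim m/\pi$; I will absorb that constant into $C_1$, or simply note it. Next, $(b-a)R = (\rho_k - \rho_{k-1})R/(b+a) \lesssim (\lambda_m/4^{L-\ell})\, 2^{-\ell}/\sqrt{k\lambda_m/4^{L-\ell}}$, which is $\bO(1/\sqrt{k})$. So the condition $(b-a)R<1$ holds except for finitely many small $k$ (or after a constant rescaling). For those exceptional blocks I fall back on the trivial bound described next.

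\textbf{Two rank bounds.} Exactly as in the proof of Theorem~\ref{thm:MHT-complexity}, $bR = \bO(\sqrt{k})$, so the hypothesis gives $r_\epsilon(\ell,j,k) \leq C_2' k^{1/3}$. The Weyl counting argument gives $r_\epsilon(\ell,j,k) = \bO(4^\ell)$. Hence
\[
  r_\epsilon(\ell,j,k) \lesssim \min(4^\ell, k^{1/3}).
\]
The transfer cost at level $\ell$ is $\sum_{j\le 4^\ell}\sum_{k\le 4^{L-\ell}} r_\epsilon^2$, using the same factor-16 argument to compare parent and child ranks.

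\textbf{Level sums.} For small $\ell$ the level cost is $\bO(4^{L+2\ell})$. For large $\ell$ it is
\[
  4^\ell \sum_{k\le 4^{L-\ell}} k^{2/3} = \bO\bigl(4^\ell\, 4^{\frac{5}{3}(L-\ell)}\bigr) = \bO\bigl(4^{\frac{5}{3}L - \frac{2}{3}\ell}\bigr).
\]
Equating the exponents, $L + 2\ell = \tfrac{5}{3}L - \tfrac{2}{3}\ell$, gives $\ell = L/4$. At that level the cost is $4^{3L/2} = m^{3/2}$.

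\textbf{Main obstacle.} The delicate step is showing that the sum over all levels is $\bO(m^{3/2})$ with a constant independent of $a$, $b$, $R$, with no extra $\log m$. The min-bound must be applied blockwise, not levelwise: within one level some $k$ fall in each regime. I plan to bound the level cost by $\min\bigl(C4^{L+2\ell},\, C4^{\frac{5}{3}L-\frac{2}{3}\ell}\bigr)$. Both expressions are geometric in $\ell$, one increasing and one decreasing, each with ratio bounded away from $1$. Summing the two geometric series from the crossover level therefore gives $C(1 - 4^{-2})^{-1} 4^{3L/2}$ plus $C(1 - 4^{-2/3})^{-1} 4^{3L/2}$.

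\textbf{Leaf level.} The column-basis cost at $\ell = L$ is $\sum_j n_j\, r_\epsilon(L,j,1)$. Here there is a single frequency block containing all $m$ eigenvalues, and $R \sim 2^{-L}$, so $bR = \bO(1)$ and $r_\epsilon = \bO(1)$. Whether or not $(b-a)R<1$ holds, the original argument gives $\bO(n)$. Collecting the constants, all of which depend only on $C_2$, $\epsilon$ and Weyl's constant, yields $C_3(n + m^{3/2})$.
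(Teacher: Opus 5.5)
Your accounting follows the paper's proof. You substitute $k^{1/3}$ for $\sqrt{k}$, pair it with the Weyl bound $\bO(4^\ell)$, and equate $4^{L+2\ell}$ with $4^{\frac53L-\frac23\ell}$ to find $\ell=L/4$. Your $\sum\min\le\min\sum$ step and the two geometric series are a correct way to make ``level $L/4$ dominates'' precise.

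The one step that fails as written is your treatment of the blocks violating $(b-a)R<1$. Since $(b-a)R\approx\sqrt{2\pi}\,(\sqrt{k}-\sqrt{k-1})$, these are the blocks with $k\le 2$. They occur at \emph{every} level, for all $4^\ell$ space boxes. If you charge them only the Weyl rank $\bO(4^\ell)$, they add a term of order $4^\ell\cdot 4^{2\ell}=4^{3\ell}$ to your bound on the level-$\ell$ cost. That term exceeds $m^{3/2}$ once $\ell>L/2$ and is of order $m^3$ near $\ell=L$. So the bound $\min(4^\ell,k^{1/3})$ is not justified for these blocks, yet your level sums use $k^{2/3}$ for every $k$.

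The leaf block is exactly such a block: $k=1$, $a=0$, and $(b-a)R=bR\approx\sqrt{2\pi}>1$. Under your fallback rule it would get rank up to $m$ and column-basis cost $nm$. That contradicts your own claim that $r_\epsilon=\bO(1)$ there.

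The fix is easy, and is presumably what your ``constant rescaling'' was meant to be. These blocks have $bR\lesssim\sqrt{k}=\bO(1)$, and you can use either of two bounds. First, since $A_a^b\subset B_b$, Theorem~\ref{thm:2D-fourier-rank} gives $r_\epsilon\le\frac12\clg{ebR+\log 2+\log(\epsilon^{-1})}^2=\bO(1)$, with a constant depending only on $\epsilon$. Second, you can split $[a,b]$ into $\lfloor (b-a)R\rfloor+1=\bO(1)$ disjoint sub-annuli, each of width less than $1/R$. Apply the hypothesis to each piece and add the ranks. This is legitimate under the pointwise definition of $\epsilon$-rank, because each $\bm{\omega}$ lies in exactly one piece and the $v_\ell$ may be set to zero off their piece.

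With either bound, $r_\epsilon\lesssim\min(4^\ell,k^{1/3})$ holds for every block, including the leaf level, and the rest of your argument goes through. The paper's own proof simply never checks the condition $(b-a)R<1$.
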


\begin{proof}
  Weyl's law gives $\lambda_m \sim m/\pi$, so that $m \leq C_1^2/\pi$ guarantees
  that all blocks of the MHT correspond to $\norm{\bm{\omega}} \leq C_1$.
  Following the proof of Theorem~\ref{thm:MHT-complexity} above but using
  $r_\epsilon \sim (bR)^{2/3} \sim k^{1/3}$ in place of $r_\epsilon = \bO(bR) =
  \bO(\sqrt{k})$ implies that the cost to store and apply level $\ell$ of the
  butterfly factorization is
  \begin{align}
    \sum_{j=1}^{4^\ell} \sum_{k=1}^{4^{L-\ell}} C_2 k^{2/3}
    \sim 4^{\frac{5}{3}L - \frac{2}{3}\ell}.
  \end{align}
  Equating this with the $\bO(4^{L + 2\ell})$ complexity from Weyl's law implies
  that level $\ell \sim \frac{L}{4}$ dominates, which yields the result.
\end{proof}

We emphasize that the modified rank growth assumption in the above lemma is
purely empirical. One could write a bound of the form $r_\epsilon \leq C
(bR)^{2/3}$ that is valid up to some fixed maximum value of $bR$ --- this would
indicate the range of $m$ for which one expects~$n + m^{3/2}$ rather than $n +
m^{5/3}$ scaling. However, further investigation is required to determine
whether this~$(bR)^{2/3}$ rate stems from a meaningful structural property of
the Fourier kernel at moderate frequencies, or if it is simply a pre-asymptotic
coincidence.

\section{Numerical experiments} \label{sec:numerical-experiments}

We now provide a number of numerical experiments to demonstrate the scalability
and applicability of our algorithm. We begin by testing the accuracy and
compression properties of the BF-MHT on various manifolds.

\subsection{Two-dimensional discrete Fourier transform}

First, we consider the two-dimensional DFT as a MHT on the flat torus, as we
analyzed in the last section. For various $n$, we butterfly compress the MHT
matrix $\mtx{\Phi}$ whose columns are the first $m = \clg{n/25}$ eigenfunctions
$\phi(x_1, x_2) = e^{i(k_1 x_1 + k_2 x_2)}$ ordered by their corresponding
eigenvalues $\lambda = k_1^2 + k_2^2$. This linear scaling of $m$ with $n$
guarantees that the most oscillatory eigenfunctions are discretized using five
points per wavelength in each dimension to avoid aliasing issues.
Figure~\ref{fig:DFT-2D} shows the size in memory of the butterfly factorization
of $\mtx{\Phi}$ as we increase $n$. Empirically, we observe a clear $\sim
n^{3/2}$ scaling as suggested in Lemma~\ref{lem:MHT-complexity-3/2}, leading to
a factor of ${\sim}33$ reduction in memory for $n = 4^{11} \approx 4.2$M and
$\epsilon = 10^{-3}$. Although the BF-MHT is of course not competitive with the
2D DFT, these results indicate the level of compression that one could expect
for problems of a similar size on general manifolds.

The proof of Theorem~\ref{thm:MHT-complexity} illustrates that the complexity of
the factorization is determined by the $\epsilon$-rank of the annulus-to-disk
Fourier kernel~$K : A_a^b \mapsto B_R$, which scales asymptotically like $bR$ by
Theorem~\ref{thm:MHT-fourier-rank}. Figure~\ref{fig:DFT-2D} displays the upper
bound from Theorem~\ref{thm:MHT-fourier-rank}, as well as the $\epsilon$-rank of
compressed blocks of the MHT matrix $\mtx{\Phi}$ for the largest tested size
of~$n = 10^6$,~$m = 40{,}000$, and $L = 10$ at the dominant level of the
factorization as a function of the corresponding values of $bR$ for each block.
We also plot the $\epsilon$-rank of the circle-to-circle Fourier kernel~$K : C_b
\mapsto C_R$, which is a lower bound on the annulus-to-disk kernel as $C_b
\subset A_a^b$ and $C_R \subset B_R$. We see that when compressing a very large
number of eigenfunctions, e.g. $m > 10^5$, one may begin to see the $r_\epsilon
= \bO(bR)$ scaling which leads to an overall $\bO(n + m^{5/3})$ complexity.
However, for more modest choices of~$m$, the empirical scaling resembles
$r_\epsilon \sim (bR)^{2/3}$. This leads instead to a transform whose overall
storage and apply cost is $\sim n + m^{3/2}$.

Finally, Figure~\ref{fig:DFT-2D} displays the relative $\ell^2$ error
$\|\mtx{\Phi}\vct{v} - \mtx{\Phi}_\epsilon\vct{v}\| / \|\mtx{\Phi}\vct{v}\|$ for
a standard Gaussian random vector $\vct{v}$, where $\mtx{\Phi}_\epsilon$ is the
BF-MHT with tolerance $\epsilon$. In this numerical experiment we fix $n =
160{,}000$, and observe good agreement between the requested tolerance
$\epsilon$ and the resulting relative error in the factorization.

\begin{figure}
  \centering
  \begin{subfigure}[b]{0.28\textwidth}
    \includegraphics[width=\textwidth, trim={0.5cm, 0.5cm, 0.3cm, 0.1cm},
    clip]{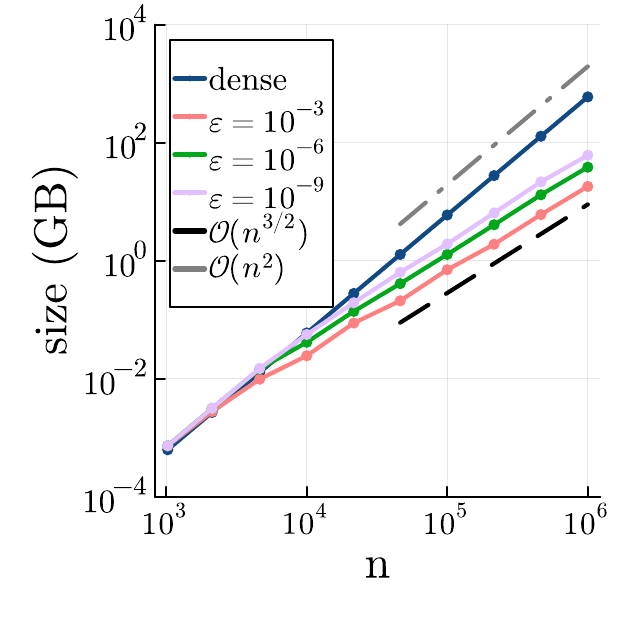}
  \end{subfigure}
  \hfill
  \begin{subfigure}[b]{0.28\textwidth}
    \includegraphics[width=\textwidth, trim={0.5cm, 0.65cm, 0.3cm, 0.1cm}, clip]{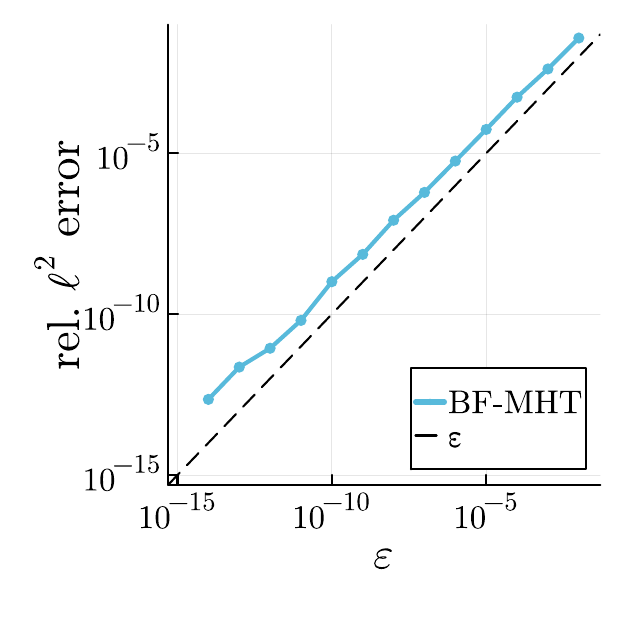}
  \end{subfigure}
  \hfill
  \begin{subfigure}[b]{0.4\textwidth}
    \includegraphics[width=\textwidth, trim={0.0cm, -0.2cm, 0.3cm, 0.1cm}, clip]{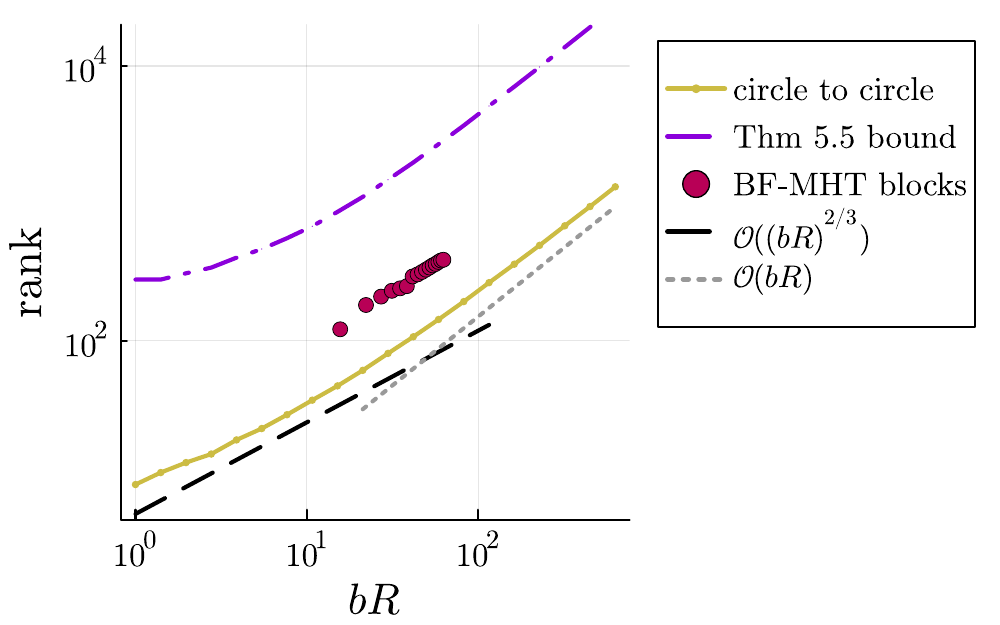}
  \end{subfigure}
  \caption{Numerical experiments for the flat torus $\M = [-\pi,\pi]^2$. Scaling
    with $n$ for $m = \clg{n/25}$ for various tolerances $\epsilon$ (left).
    Matvec relative error as a function of compression tolerance $\epsilon$ for
    $n = 160{,}000$ and $m = 6{,}400$ (center).  $\epsilon$-ranks of
    circle-to-circle Fourier transforms and of compressed blocks of the BF-MHT
    corresponding to annulus-to-disk Fourier transforms with $n = 10^6, m =
    40{,}000,$ and $\epsilon = 10^{-3}$ (right).}
  \label{fig:DFT-2D}
\end{figure}

\subsection{Manifold harmonic transform on a deformed torus} \label{sec:torus}

We now test the BF-MHT on a deformed torus mesh discretized using quadrilateral
patches with $8 \times 8$ Chebyshev nodes on each patch, which we use as our
spatial locations $\{\bm{x}_j\}_{j=1}^n$. As this manifold does not have known
closed-form eigenfunctions, we must compute $\phi_k$ numerically. For this
purpose we use a fast direct solver for the Laplace-Beltrami problem based on a
spectral collocation scheme~\cite{fortunato2024high}. We then employ a Krylov
eigensolver~\cite{stewart2002krylov} to compute $m$ eigenfunctions. For various
$n$ we compute and compress $m = \clg{n/32}$ eigenfunctions using the BF-MHT.
The linear scaling of $m$ with $n$ is natural in this setting, as one must
refine the mesh in proportion to the scale of oscillations in the eigenfunctions
in order to accurately numerically compute them. Figure~\ref{fig:torus}
illustrates an empirical~$\sim n^{3/2}$ scaling for the memory required to store
the BF-MHT. Relative to the $\bO(n^2)$ cost of storing the dense matrix, we see
that the BF-MHT yields a factor of ${\sim}10$ reduction for $n = 51{,}200$.
Recall that the complexity of storing and applying the MHT are asymptotically
equivalent because every stored transfer matrix must be applied to a vector
exactly once, and thus this memory scaling is also a proxy for matvec runtime
scaling. Figure~\ref{fig:torus} also illustrates agreement between the requested
tolerance $\epsilon$ and the relative error in matrix-vector products with the
BF-MHT, just as in the flat torus experiments above.

\begin{figure}[!t]
  \centering
  \begin{subfigure}[b]{0.28\textwidth}
    \includegraphics[width=\textwidth, trim={0.5cm, 0.5cm, 0cm, 0.1cm}, clip]{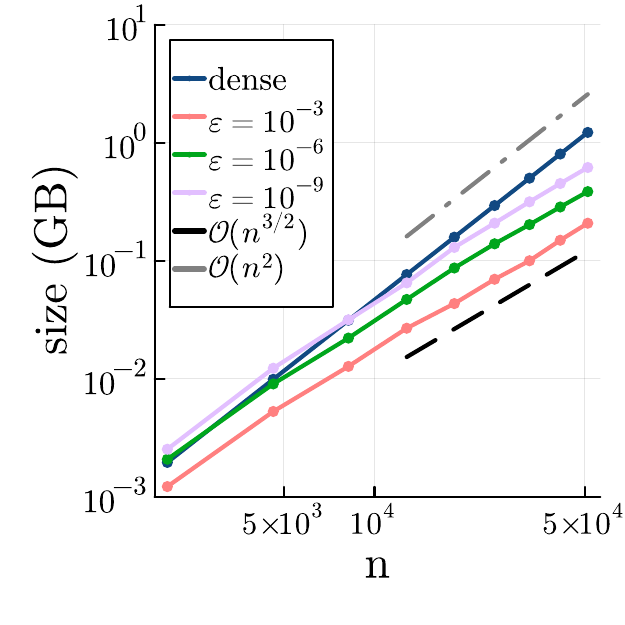}
  \end{subfigure}
  \hfill
  \begin{subfigure}[b]{0.28\textwidth}
    \includegraphics[width=\textwidth, trim={0.5cm, 0.65cm, 0.3cm, 0.1cm}, clip]{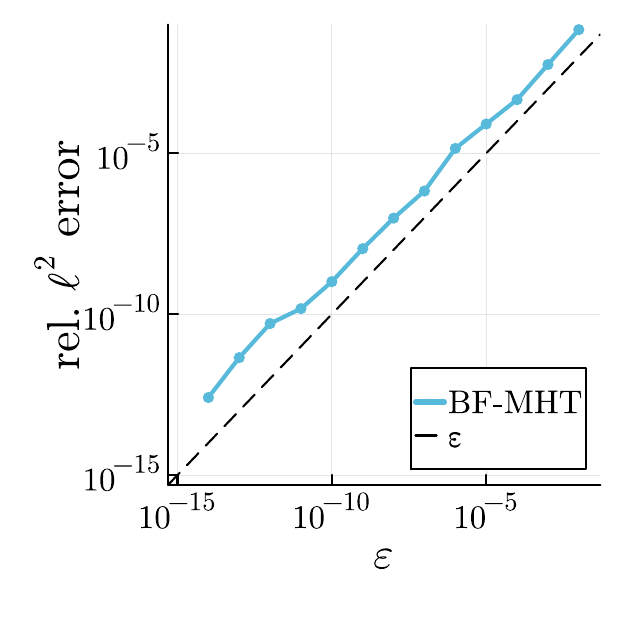}
  \end{subfigure}
  \hfill
  \begin{subfigure}[b]{0.39\textwidth}
    \includegraphics[width=\textwidth, trim={-1cm, 0cm, -1cm, 0cm},
    clip]{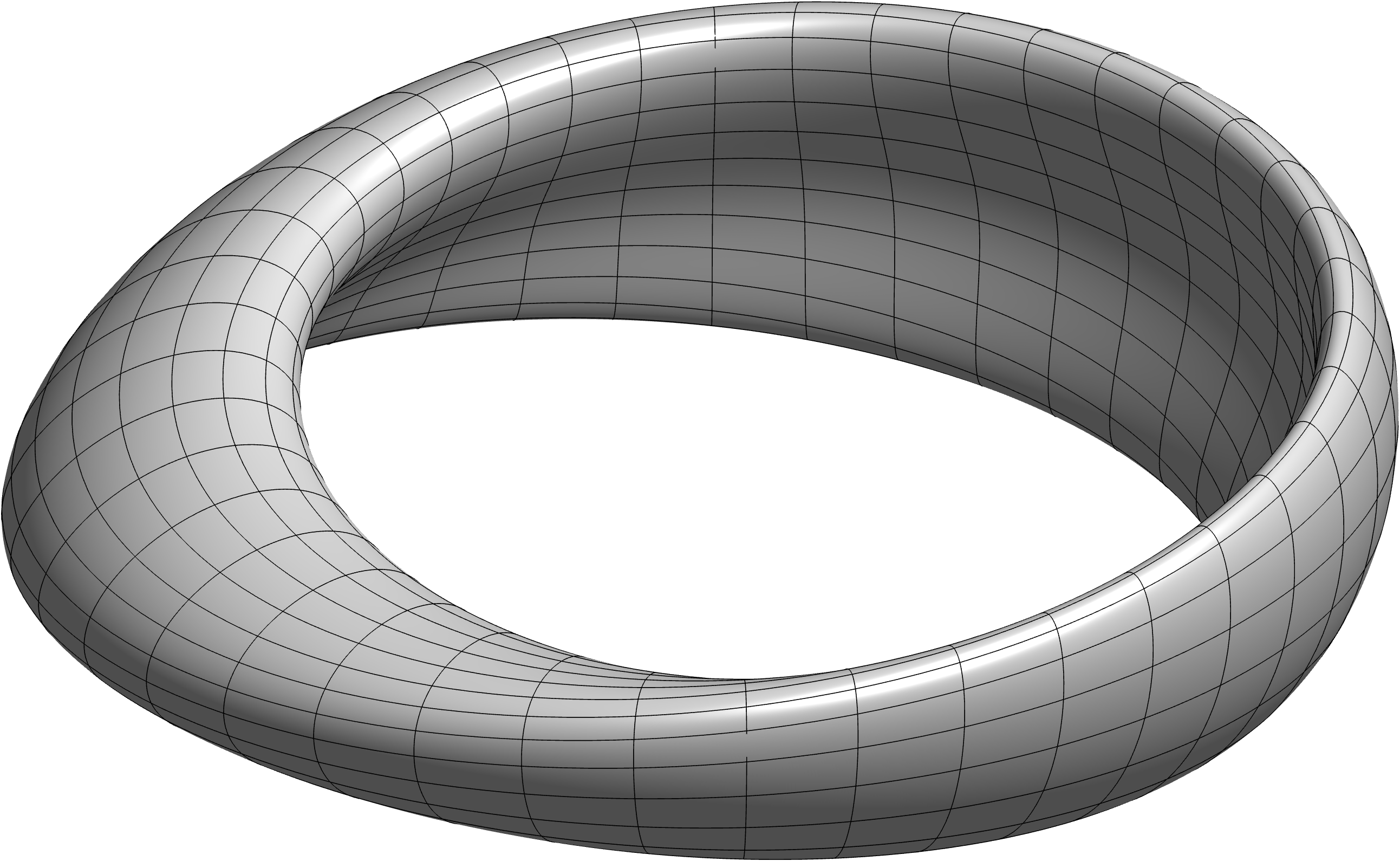} \vspace{0.5cm}
  \end{subfigure}
  \caption{Numerical experiments for a deformed torus. Scaling with $n$ for $m =
  \clg{n/32}$ for various tolerances $\epsilon$ (left). Matvec relative error as
  a function of compression tolerance $\epsilon$ for $n = 51{,}200$ and $m =
  1{,}600$ (center). Mesh of the geometry (right).}
  \label{fig:torus}
\end{figure}

\section{Applications} 

We now demonstrate the performance of the BF-MHT in various applications to
computer graphics, uncertainty quantification, and machine learning. In these
examples, we compress eigenfunctions computed using various discretizations of
the Laplace-Beltrami operator and of the underlying manifold, including finite
elements on triangularizations, spectral collocation on quadrilateral meshes,
and radial basis functions on point clouds. In doing so, we emphasize that the
BF-MHT is discretization agnostic, and applicable across a wide variety of
geometric and computational frameworks.

\subsection{Spectral geometry processing}

The manifold harmonic transform was proposed in~\cite{vallet2008spectral} as a
method for filtering and modifying meshes in computer graphics. In this setting,
one considers the coordinates of the mesh vertices $\bm{x}_1, \dots, \bm{x}_n
\in \R^3$ as point values of three distinct univariate functions, and computes
their coefficients in the basis consisting of the first $m$ Laplace-Beltrami
eigenfunctions on the mesh. This amounts to computing an \textit{inverse} MHT by
solving the least squares problem
\begin{align} \label{eq:inverse-MHT}
  \vct{c}^{(i)} 
  := \argmin_{\vct{c} \in \R^m} \norm[2]{\mtx{\Phi} \vct{c} - \mat{(\bm{x}_1)_i \\ \vdots \\ (\bm{x}_n)_i}}^2
\end{align}
for $i=1,2,3$. The coefficient vectors $\vct{c}^{(i)}$ are then multiplied
pointwise by a filter function $\vct{c}^{(i)}_k \mapsto F(\lambda_k)
\vct{c}^{(i)}_k$ which amplifies or attenuates geometric features on various
lengthscales. Finally, we compute MHTs of the modified coefficient vectors for
each coordinate function, mapping from spectral space back to a modified mesh in
physical space
\begin{align}
  \mat{(\tilde{\bm{x}}_1)_i \\ \vdots \\ (\tilde{\bm{x}}_n)_i}
  = \mtx{\Phi} \mat{F(\lambda_1)\vct{c}^{(i)}_1 \\ \vdots \\ F(\lambda_m) \vct{c}^{(i)}_m}.
\end{align}
After sufficiently many eigenfunctions have been computed to accurately
represent the coordinate functions of the original manifold $\M$, the dominant
computational cost in this pipeline is the computation of the coefficient
vectors using the inverse MHT~\eqref{eq:inverse-MHT}. To accelerate this
computation, we use the BF-MHT within the LSQR iterative least squares
algorithm~\cite{paige1982lsqr}. After the coefficient vectors have been
computed, a modified geometry can be very rapidly computed for a new filter
function $F$ using a single BF-MHT, allowing for fast, interactive mesh design.

\begin{figure}[t]
  \centering
  \begin{subfigure}[T]{0.31\textwidth}
    \includegraphics[width=\textwidth]{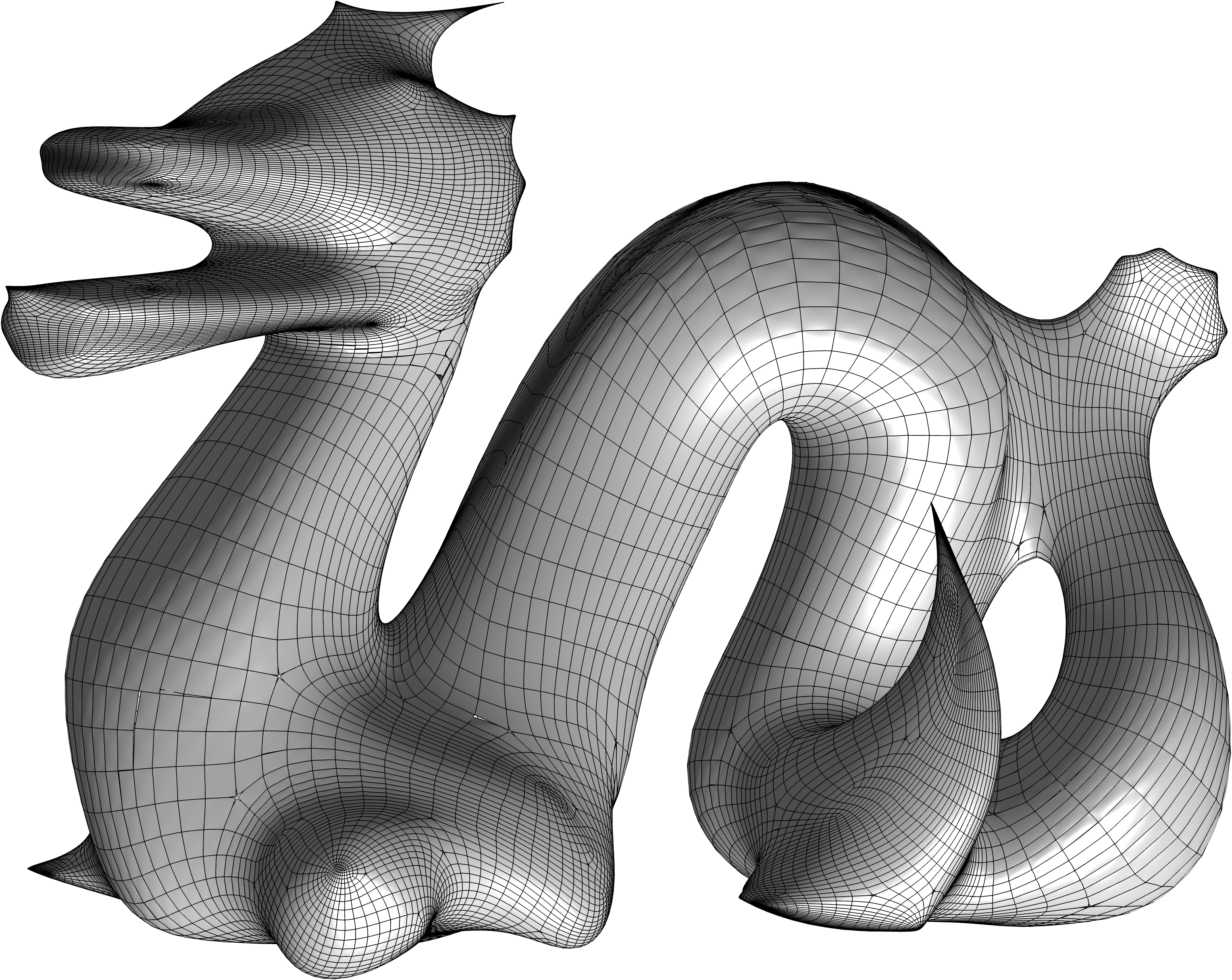}
  \end{subfigure}
  \hfill
  \begin{subfigure}[T]{0.32\textwidth}
    \includegraphics[width=\textwidth, trim={7cm, 2.5cm, 6cm, 2cm}, clip]{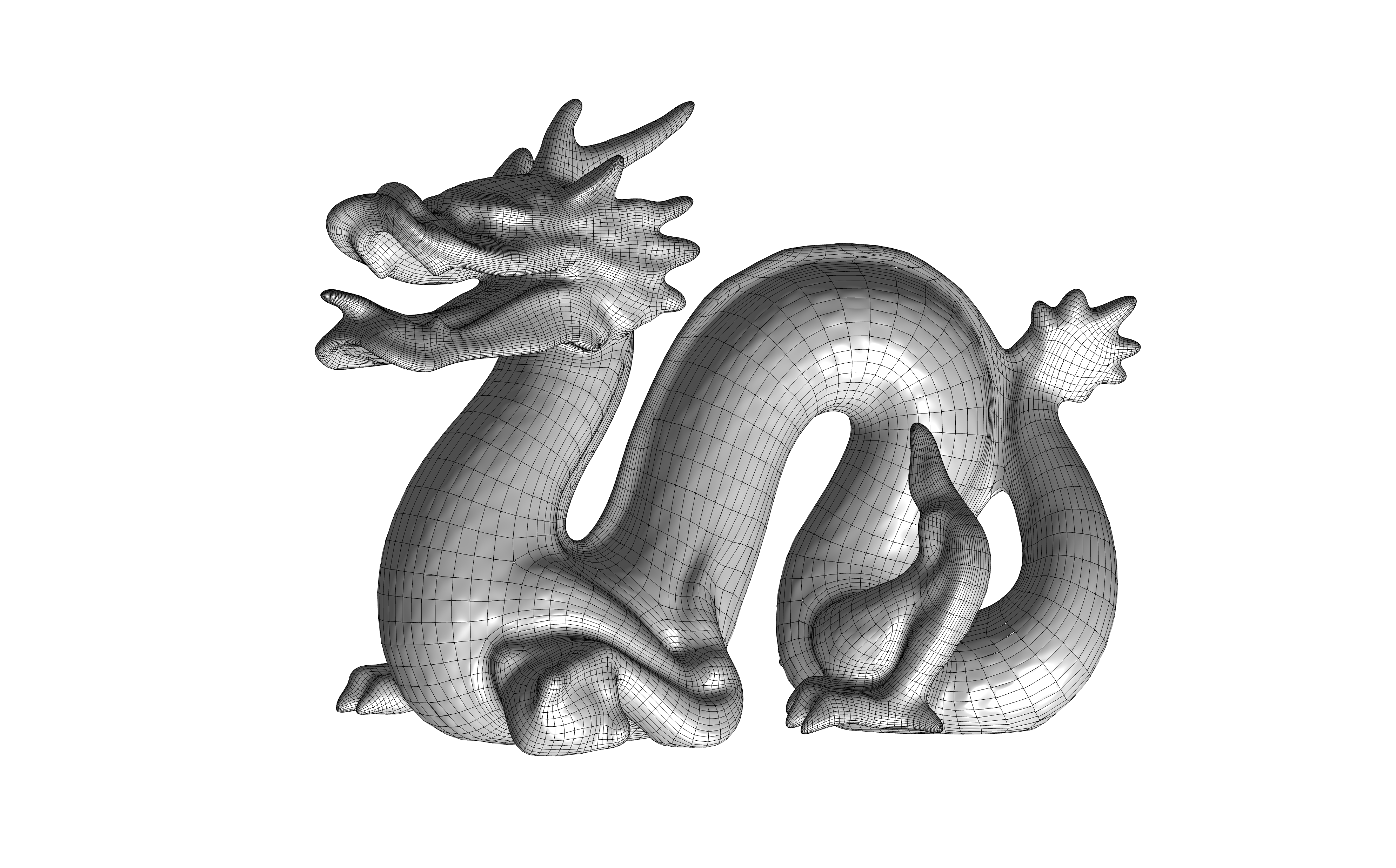}
  \end{subfigure}
  \hfill
  \begin{subfigure}[T]{0.32\textwidth}
    \includegraphics[width=\textwidth, trim={7cm, 2.5cm, 6cm, 2cm}, clip]{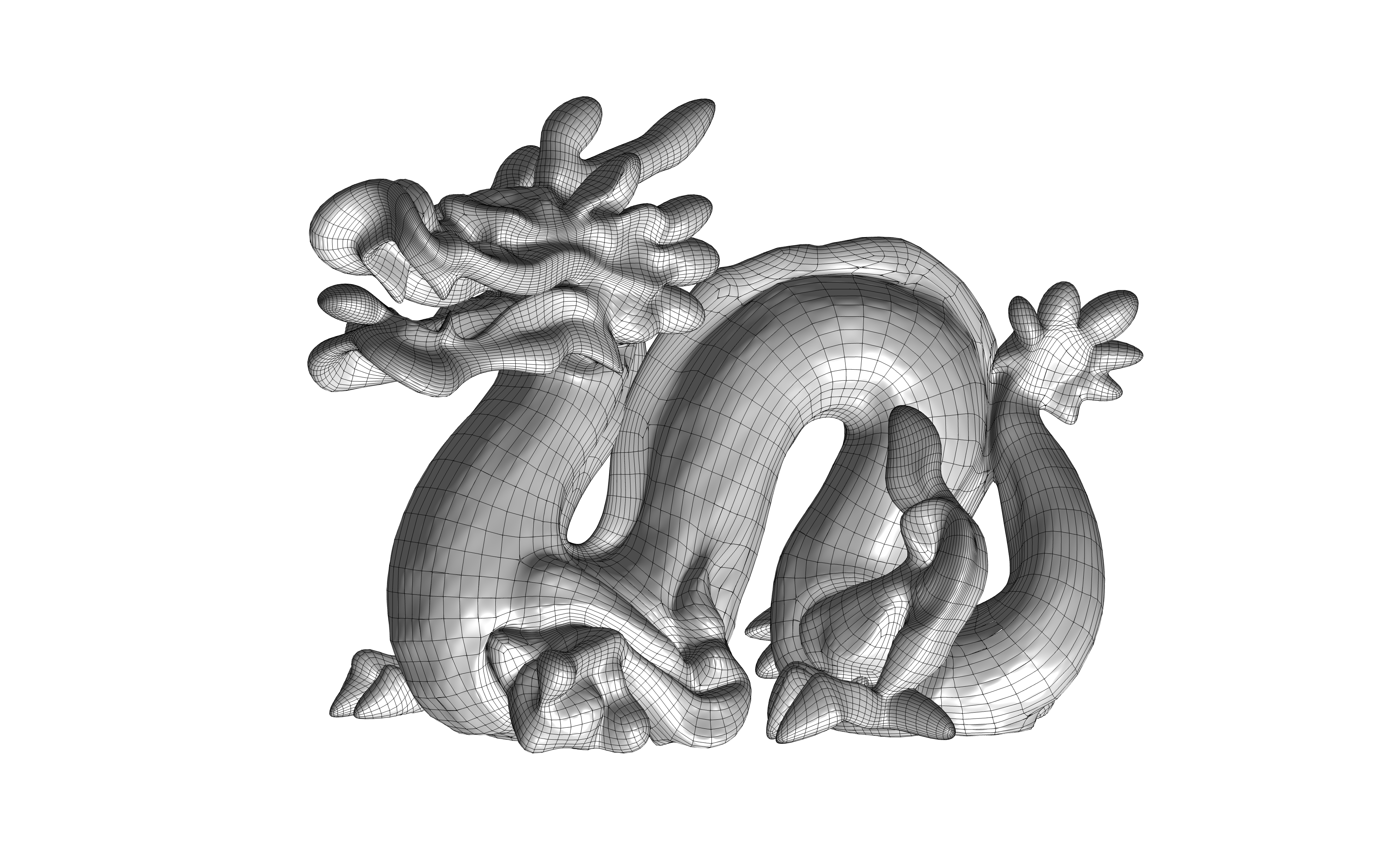}
  \end{subfigure}
  \caption{Filtered mesh with low-pass filter $F(\lambda) =
  \mathds{1}_{\{\lambda < 1000\}}$ (left). Recovered mesh with node coordinates
  $(\tilde{\bm{x}}_1)_i,\dots,(\tilde{\bm{x}}_n)_i$ for $i=1,2,3$ (center).
  Filtered mesh with a Gaussian bump for moderate eigenvalues $F(\lambda) = 1 +
  6400\exp(-(\lambda - 5500)^2/5000^2)$ (right).}
  \label{fig:geometry-processing}
\end{figure}

To illustrate the performance of the BF-MHT in a large-scale example, we use the
spectral collocation-based fast direct solver implemented in the
\texttt{SurfaceFun} package~\cite{fortunato2024high} to compute $m = 5{,}632$
Laplace-Beltrami eigenfunctions on a quadrilateral mesh of a dragon consisting
of 28{,}778 patches with $4 \times 4$ Cheybshev nodes on each patch, for a total
of $n = 460{,}448$ degrees of freedom. As the dense MHT matrix $\mtx{\Phi}$
would require ${\sim}19$ GB of RAM, and the cost of re-orthogonalization of this
large number of eigenvectors within a Krylov solver would be prohibitive, we
divide this computation into multiple spectral bands in a shift-and-invert
fashion. First we compute the surface area $A(\M)$ of the discretized manifold
$\M$, after which Weyl's law provides an estimate of the number of eigenvalues
less than $\lambda$ as
\begin{align}
  N(\lambda) \sim \frac{4\pi \lambda}{A(\M)}.
\end{align}
Due to this asymptotically uniform spectral density, it is straightforward to
choose shifts $\sigma$ so that a roughly equal number of eigenpairs are computed
for each shift with minimal redundant eigenpairs near the boundary between
slices.

For tolerance $\epsilon = 10^{-6}$, the BF-MHT of the dragon mesh requires only
1.33 GB of memory --- a factor of ${\sim}14$ compression over the dense matrix.
For lower tolerance $\epsilon = 10^{-3}$, the BF-MHT achieves a factor of
${\sim}37$ compression, requiring only 533 MB of memory. This lower accuracy,
more compressed MHT would be sufficient for smoothing data or sampling from
random fields (as we discuss in the next section), but we find the higher
accuracy version is required to accurately and stably solve the least squares
problem~\eqref{eq:inverse-MHT}. Figure~\ref{fig:geometry-processing} displays
the recovered mesh with node coordinates
$(\tilde{\bm{x}}_1)_i,\dots,(\tilde{\bm{x}}_n)_i$ for $i=1,2,3$ as well as two
filtered meshes with low-pass and band-bump filters $F$.

\subsection{Gaussian random fields on manifolds}

Gaussian random fields (GRFs)~\cite{williams2006gaussian} are a ubiquitous class
of models in statistics and uncertainty quantification in which observations of
a random process $Y(\bm{x})$ with $\E Y(\bm{x}) \equiv 0$ taken at locations
$\bm{x}_1,\dots,\bm{x}_n$ are modeled as jointly Gaussian $\vct{y} :=
[Y(\bm{x}_1),\dots,Y(\bm{x}_n)] \sim N(\vct{0}, \mtx{\Sigma}_{\bth})$. The
covariance matrix $\mtx{\Sigma}_{\bth}$ is typically defined entry-wise using a
closed-form parametric family of positive definite functions
$(\mtx{\Sigma}_{\bth})_{jk} := K_{\bth}(\bm{x}_j, \bm{x}_k)$. When the
underlying geometry of the observation locations $\{\bm{x}_j\}_{j=1}^n$ is
$\R^d$, a number of parametric families of positive definite functions
$K_{\bth}$ are known in closed form. However, defining positive definite
covariance functions becomes much more challenging on general manifolds $\M$. 

One method for building covariance functions on manifolds is to specify a
positive \textit{spectral density function} $S_{\bth}$, and construct the
covariance function in terms of the Laplace-Beltrami
eigenfunctions~\cite{borovitskiy2020matern,solin2020hilbert}
\begin{align} \label{eq:kernel-representation}
  K_{\bth}(\bm{x}_j, \bm{x}_k) = \sum_{\ell=1}^\infty S_{\bth}(\lambda_\ell) \phi_\ell(\bm{x}_j) \phi_\ell(\bm{x}_k).
\end{align}
As $S_{\bth}$ is positive, $K_{\bth}$ is positive definite by construction.
Truncating the representation~\eqref{eq:kernel-representation} after $m$ terms
yields an approximation to the covariance matrix in terms of the MHT matrix as
$\mtx{\Sigma}_{\bth} \approx \mtx{\Phi}\mtx{S}_{\bth}\mtx{\Phi}^*$, where
$\mtx{S}_{\bth} := \text{diag}(S_{\bth}(\lambda_1), \dots,
S_{\bth}(\lambda_m))$. One can similarly generate approximate samples from the
corresponding process by truncating the Karhunen-Lo\`eve expansion
\begin{equation} \label{eq:KL-expansion}
  Y(\bm{x}_j) = \sum_{\ell=1}^\infty \sqrt{S_{\bth}(\lambda_\ell)} Z_\ell \phi_\ell(\bm{x}_j) \quad \text{for} \ j=1,\dots,n, \quad Z_\ell \iid N(0,1)
\end{equation}
after $m$ terms. This is precisely a MHT $\vct{y} = \mtx{\Phi}
\sqrt{\mtx{\Sigma}_{\bth}} \vct{z}$ where $\vct{z} \sim N(\vct{0}, \vct{I})$.
Therefore, the BF-MHT can be applied directly to rapidly sample from these
processes and compute matrix-vector products with the covariance matrix.
From~\eqref{eq:KL-expansion}, one sees that samples from the GRF inherit the
boundary conditions enforced on the eigenfunctions $\phi_k$. Thus the above
representation is useful in situations where one wishes to constrain the
behavior of the random field near boundaries, e.g. for modeling populations of
aquatic or land animals near shorelines~\cite{solin2019know}.

Figure~\ref{fig:torus-samples} shows samples drawn for various GRF models on two
different geometries. In both cases, we compute $m = \clg{n/32}$ eigenfunctions
and use tolerance $\epsilon = 10^{-3}$. For the torus, we use a spectral
collocation method on a quadrilateral mesh with $n = 51{,}200$ as in
Section~\ref{sec:torus}. For the Lake Superior geometry, we use linear finite
elements on a triangularization with $n = 57{,}091$ nodes, and enforce Dirichlet
boundary conditions. For both these problems, we achieve a factor of ${\sim}10$
compression for the MHT matrix~$\mtx{\Phi}$.


\begin{figure}
  \centering
  \begin{subfigure}[b]{0.32\textwidth}
    \includegraphics[width=\textwidth, trim={4.5cm, 4.5cm, 4cm, 3.4cm}, clip]{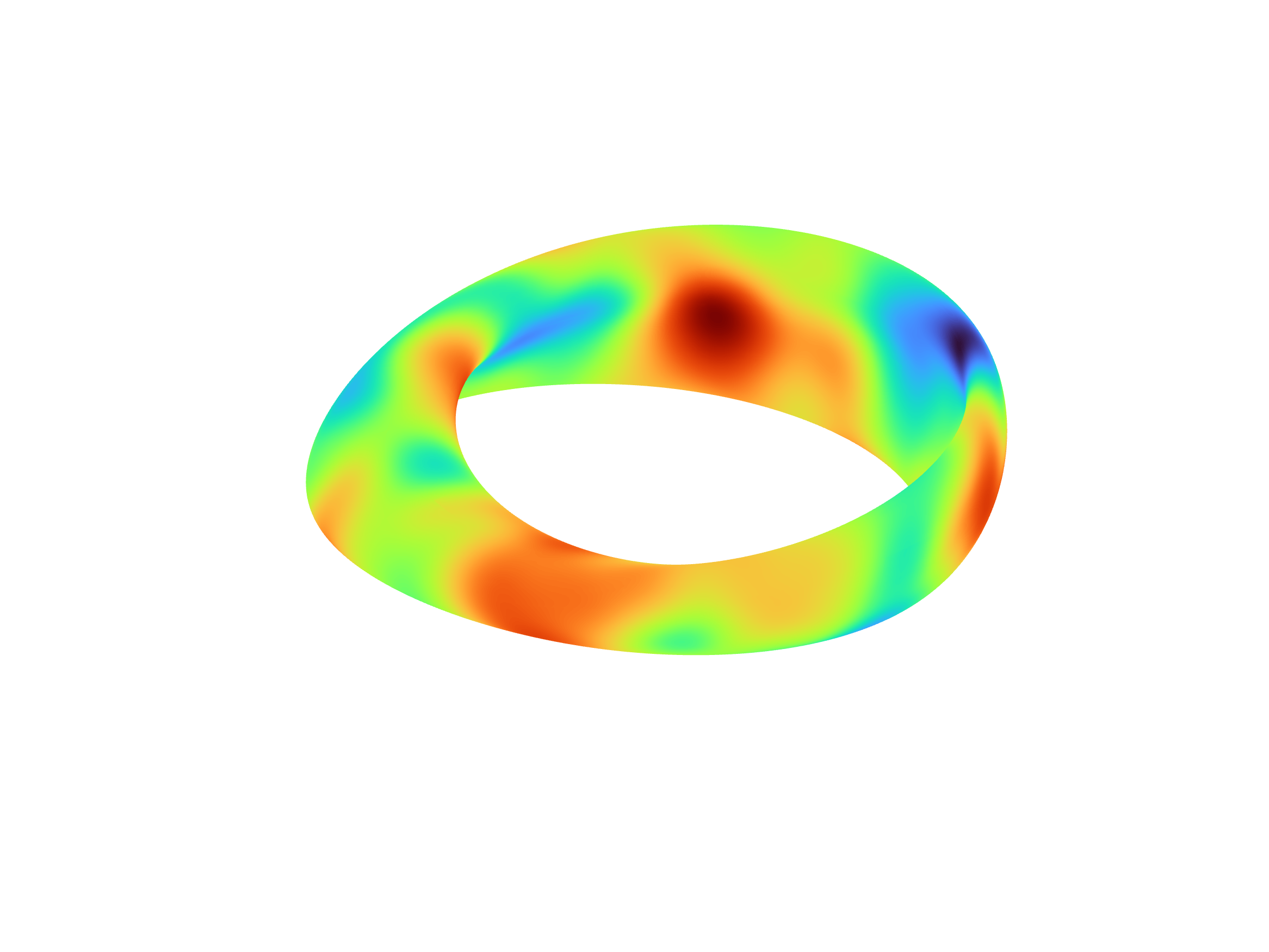}
  \end{subfigure}
  \hfill
  \begin{subfigure}[b]{0.32\textwidth}
    \includegraphics[width=\textwidth, trim={4.5cm, 4.5cm, 4cm, 3.4cm}, clip]{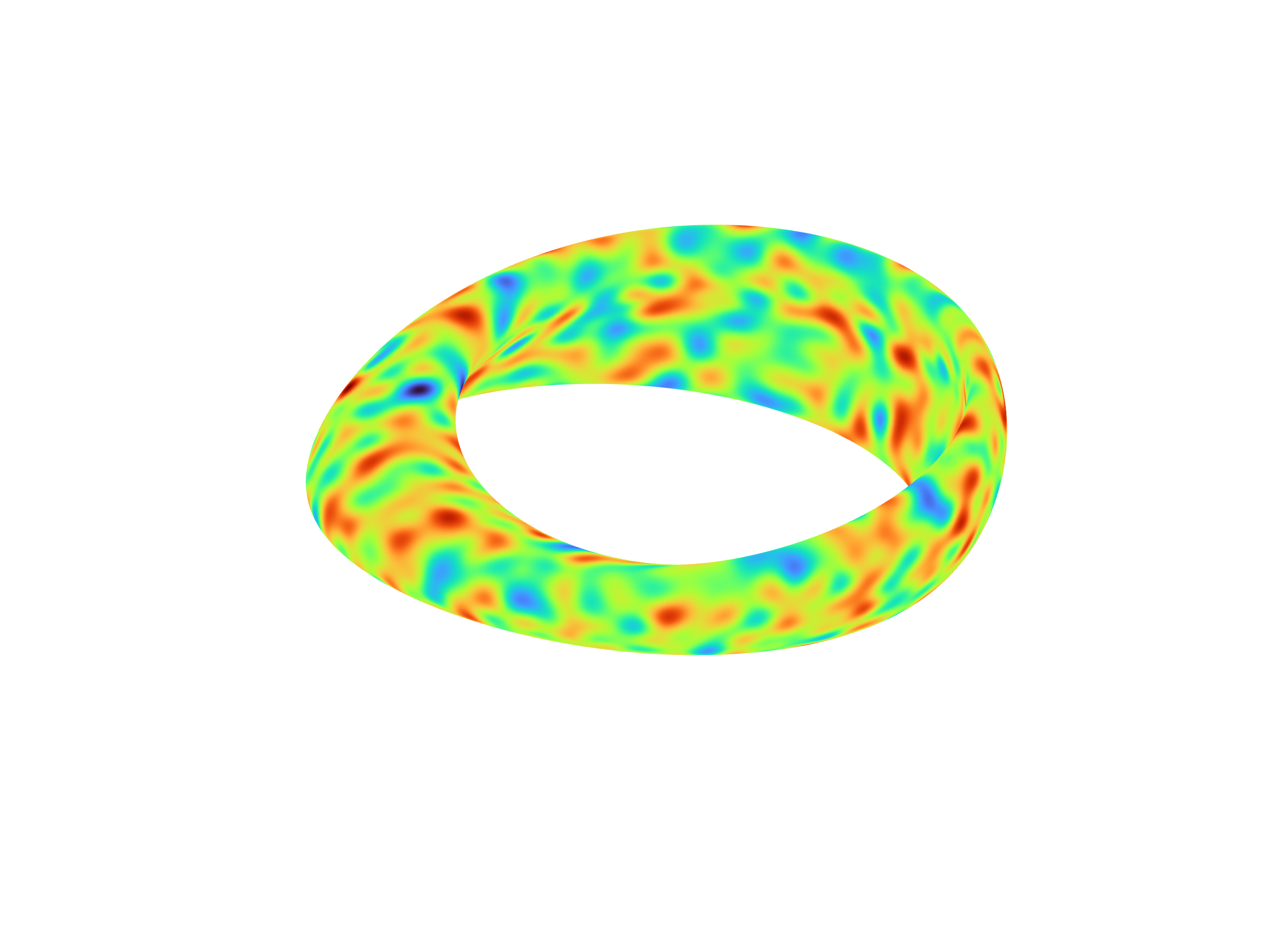}
  \end{subfigure}
  \hfill
  \begin{subfigure}[b]{0.32\textwidth}
    \includegraphics[width=\textwidth, trim={4.5cm, 4.5cm, 4cm, 3.4cm}, clip]{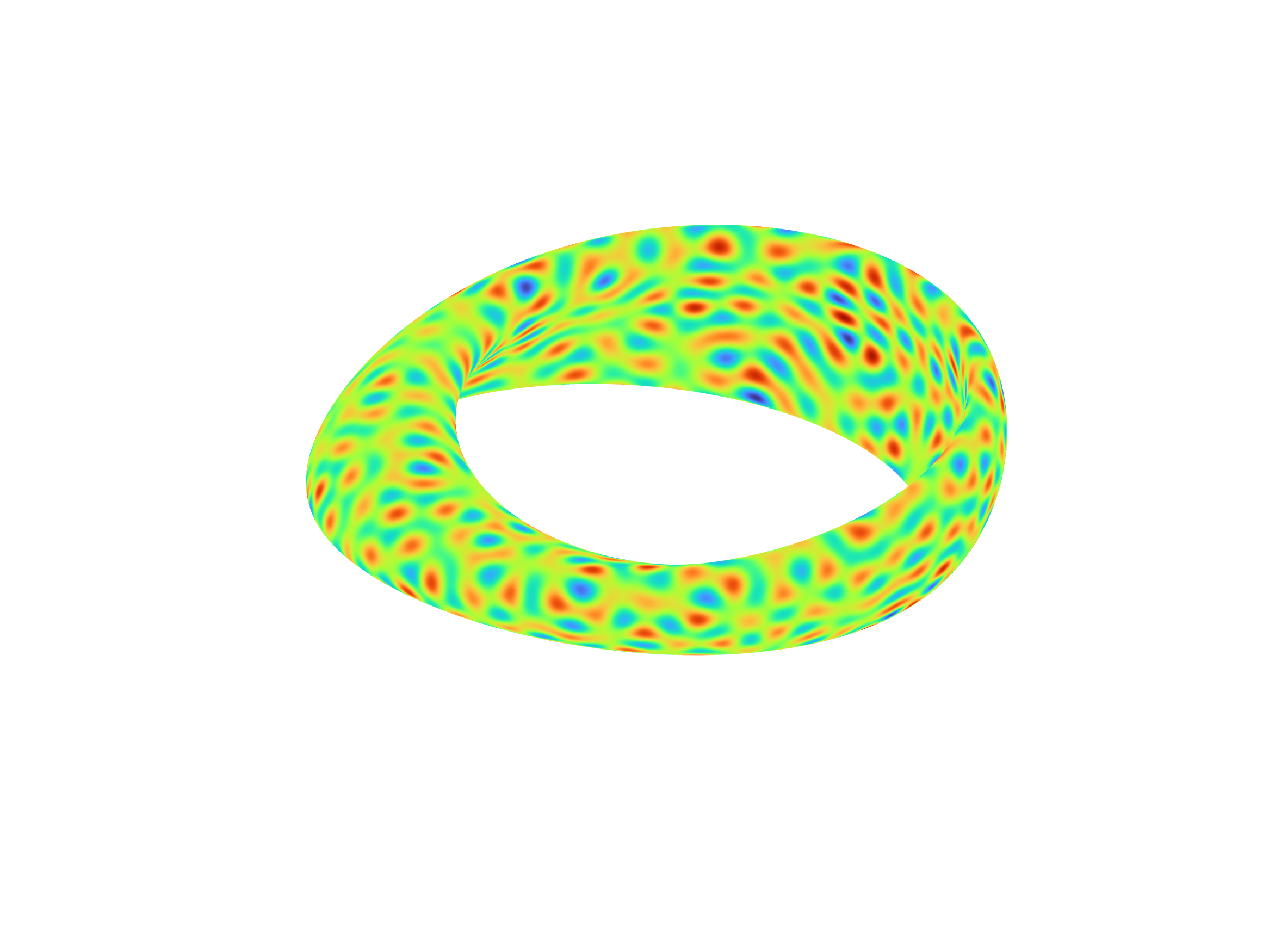}
  \end{subfigure} \\
  \begin{subfigure}[b]{0.32\textwidth}
    \includegraphics[width=\textwidth, trim={4.5cm, 4.5cm, 4cm, 3.4cm}, clip]{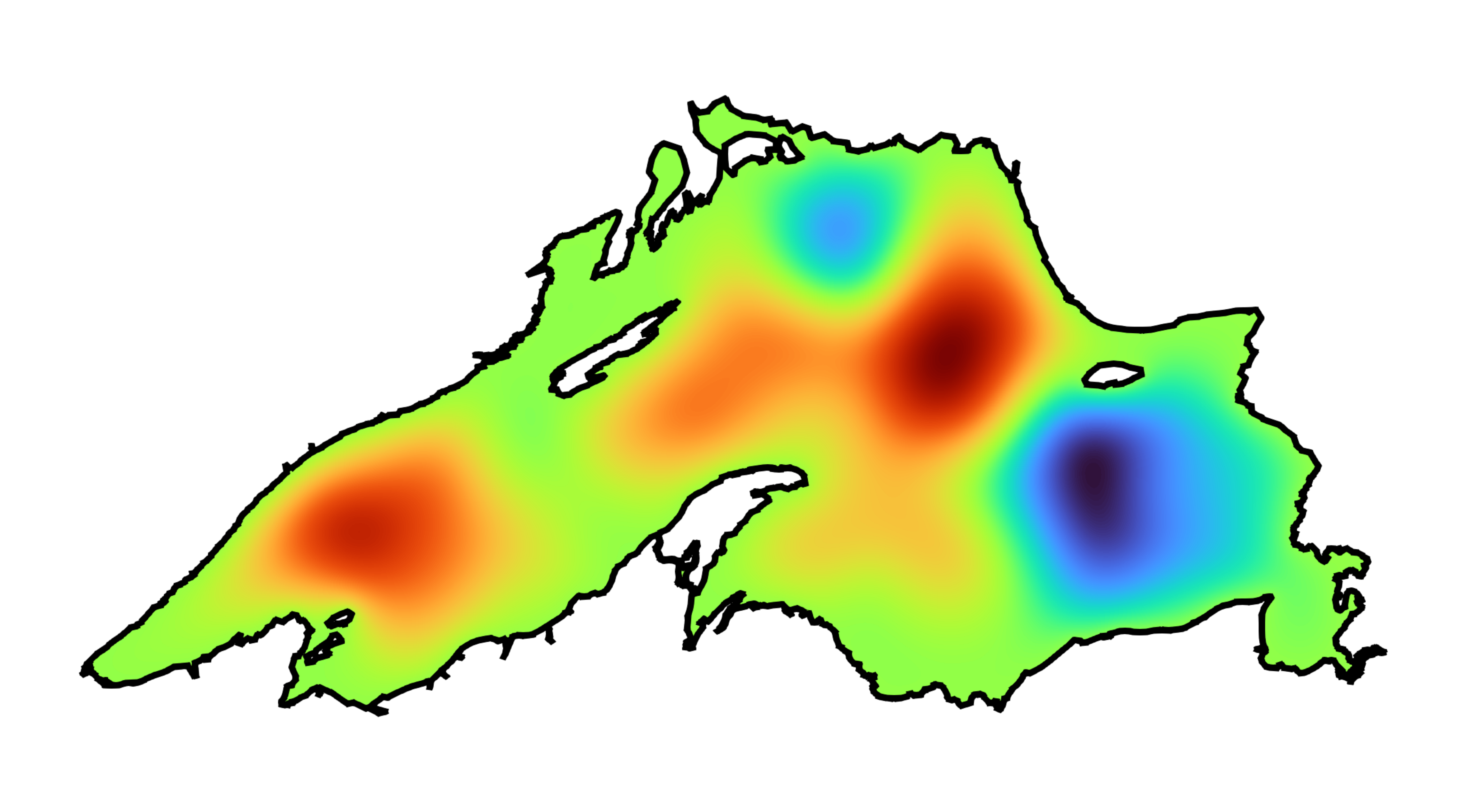}
  \end{subfigure}
  \hfill
  \begin{subfigure}[b]{0.32\textwidth}
    \includegraphics[width=\textwidth, trim={4.5cm, 4.5cm, 4cm, 3.4cm}, clip]{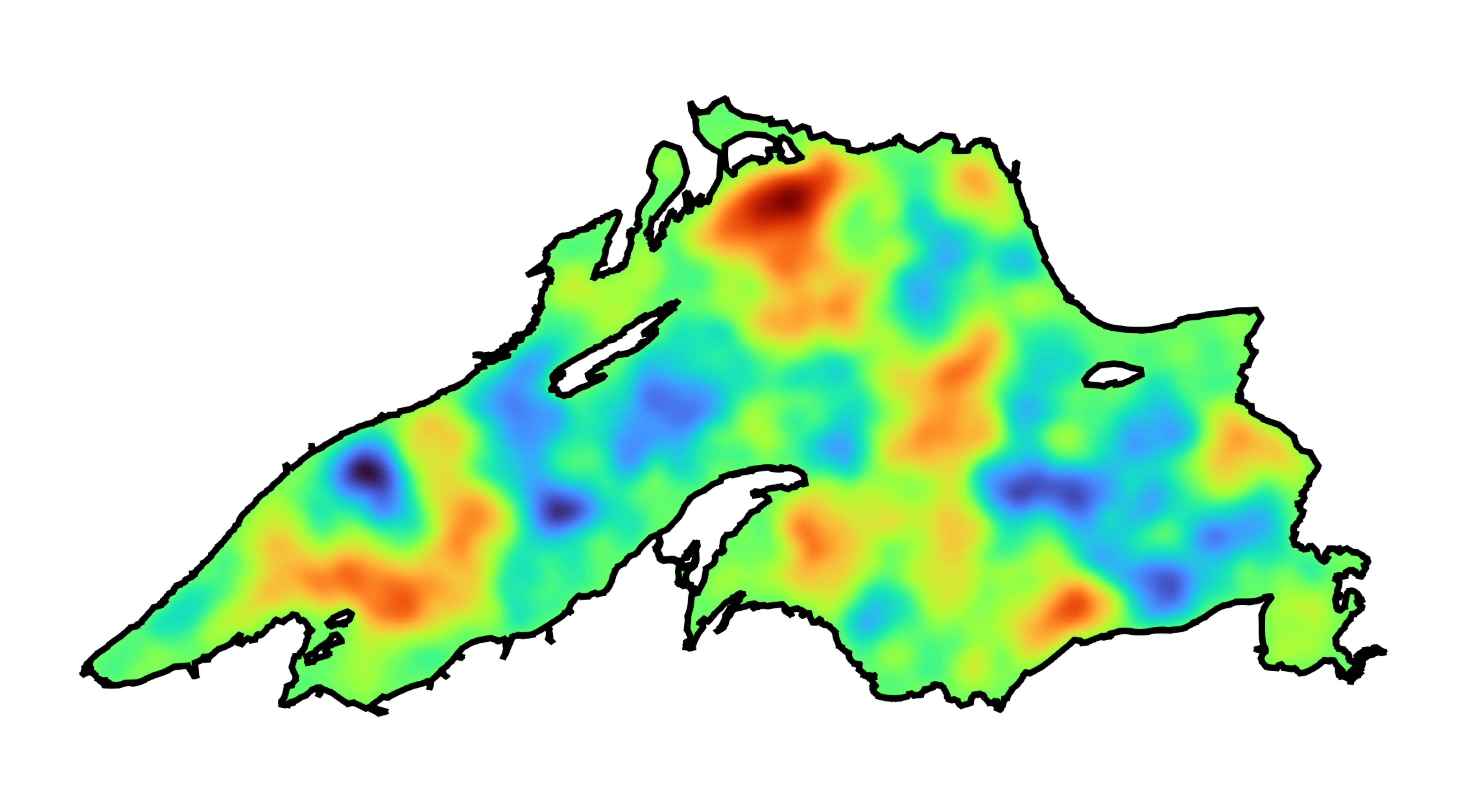}
  \end{subfigure}
  \hfill
  \begin{subfigure}[b]{0.32\textwidth}
    \includegraphics[width=\textwidth, trim={4.5cm, 4.5cm, 4cm, 3.4cm}, clip]{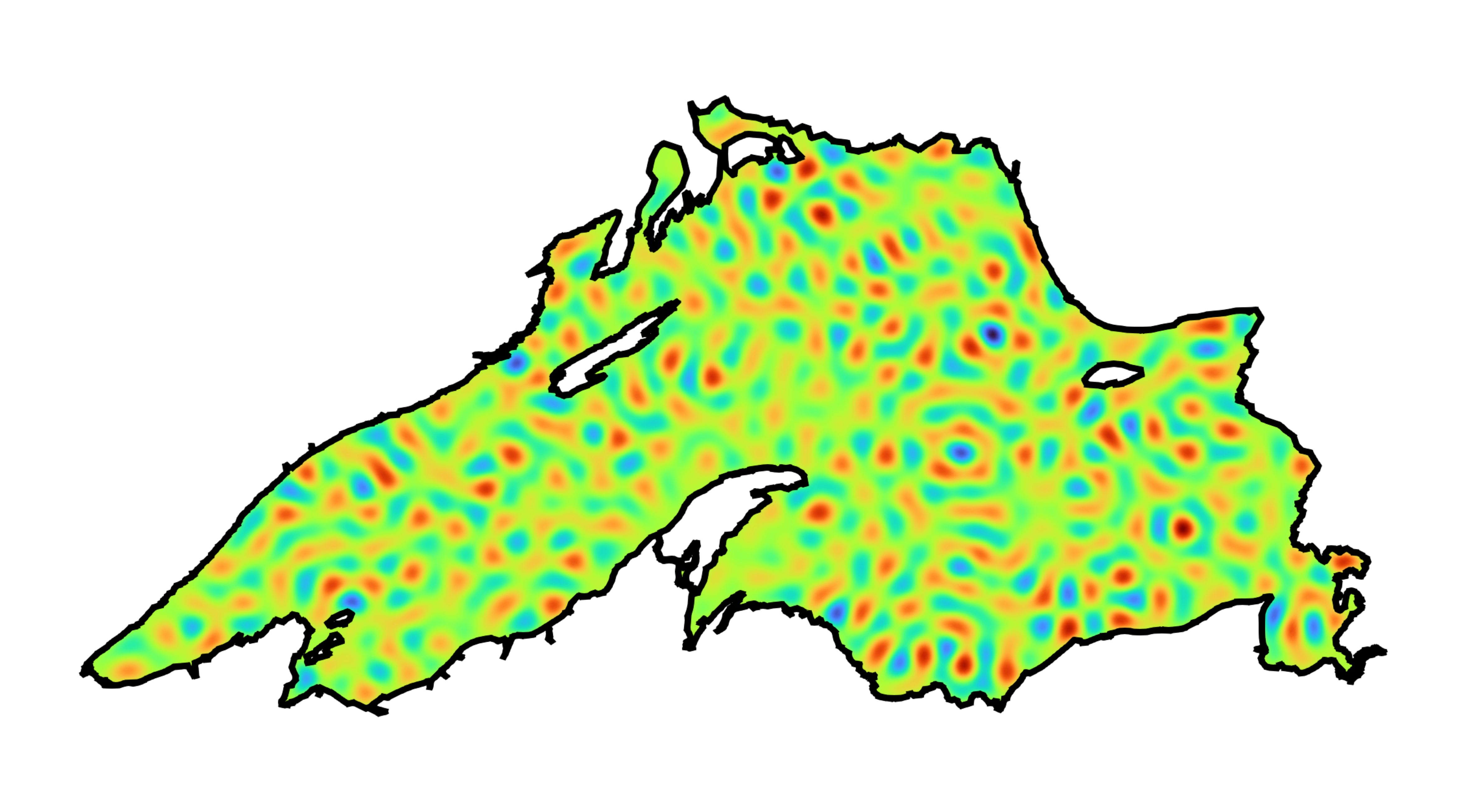}
  \end{subfigure}
  \caption{Samples from GRFs for Mat\'ern spectral densities with $\nu = 3$
  (left) and $\nu = 1$ (center), and for a random surface wave model
  $S_{\bth}(\lambda) = \exp\{-(\lambda - \lambda_0)^2 / \eta^2\}$ (right) on the
  deformed torus quadrilateral mesh and the Lake Superior triangularization with
  Dirichlet boundary conditions.}
  \label{fig:torus-samples}
\end{figure}

\subsection{Laplacian eigenmaps}

Laplacian eigenmaps~\cite{belkin2003laplacian} are a popular machine learning
framework for high-dimensional data which lie approximately on some
lower-dimensional manifold. In this framework, the eigenvectors of a weighted
graph Laplacian are used as approximations to the Laplace-Beltrami
eigenfunctions on the underlying manifold. If the data lie exactly on the
manifold $\M$, then these discrete approximations are known to converge to the
true Laplace-Beltrami eigenfunctions as the number of observations grows, given
a suitably chosen weighting of the graph~\cite{singer2006graph}. It is therefore
natural to consider butterfly compressing the eigenvectors of the weighted graph
Laplacian to construct an accelerated MHT for the mesh-free setting.

A common choice of weighting is the heat kernel, which yields the kernel matrix
\begin{align}
  \mtx{K}_{jk} := e^{-\norm{\vct{x}_j - \vct{x}_k}^2/t},
\end{align}
where $t$ is a parameter which controls the degree of localization. Due to the
exponential decay of the heat kernel, we threshold $\mtx{K}$ and only store
entries greater than some tolerance~$\epsilon$, yielding a sparse approximation
to the full matrix. We then compute the diagonal matrix $\mtx{D}_{jj} :=
\sum_{k=1}^n \mtx{K}_{jk}$ which normalizes the kernel matrix. The desired
Laplacian eigenvectors are obtained by solving the generalized eigenproblem
\begin{align} \label{eq:eigenmap-gen-eig}
  (\mtx{D} - \mtx{K}) \vct{\phi} = \lambda \mtx{D} \vct{\phi}
\end{align} 
which is easily converted by a diagonal rescaling $\vct{v} :=
\mtx{D}^{-\frac{1}{2}} \vct{\phi}$ into the sparse, symmetric standard
eigenproblem
\begin{align} \label{eq:eigenmap-std-eig}
  (\mtx{I} - \mtx{D}^{-\frac{1}{2}} \mtx{K} \mtx{D}^{-\frac{1}{2}}) \vct{v} = \lambda \vct{v}.
\end{align}

To test the effectiveness of the BF-MHT in the Laplacian eigenmaps context, we
perform the following numerical experiment. First, we sample $n$ vertices
$\bm{x}_1, \dots, \bm{x}_n$ uniformly at random from a fine mesh of the human
hand. We then add independent Gaussian noise $\R^3 \ni \bm{\xi}_1, \dots,
\bm{\xi}_n \iid N(\vct{0}, \sigma^2\mtx{I})$ to each sampled vertex to perturb
it away from the true manifold. Next, we form the kernel matrix $\mtx{K}$
corresponding to the noisy points $\bm{x}_j + \bm{\xi}_j$. Following the error
analysis of~\cite{singer2006graph}, we choose $t \propto n^{-1/4}$. We then
zero-out entries of $\mtx{K}$ with absolute values below $\epsilon$ and compute
$m = \clg{n/50}$ eigenvectors of the resulting sparse
eigenproblem~\eqref{eq:eigenmap-std-eig} using a Krylov method. Finally, we
construct a Fiedler tree in physical space and a binary tree in eigenvalue
space, and butterfly compress the eigenvector matrix. We repeat this experiment
for a number of data sizes $n$. Figure~\ref{fig:hand} shows three example
eigenfunctions of the weighted graph Laplacian and the scaling of the BF-MHT
with $n$. We observe the same $\sim n^{3/2}$ memory scaling as in the previous
meshed examples, and achieve a factor of ${\sim}10$ compression for the first $m
= 4000$ eigenfunctions on $n = 200{,}000$ noisy points.

\begin{figure}
  \centering
  \begin{subfigure}[b]{0.22\textwidth}
    \includegraphics[width=\textwidth, trim={3.65cm, 5cm, 6.25cm, 4.7cm}, clip]{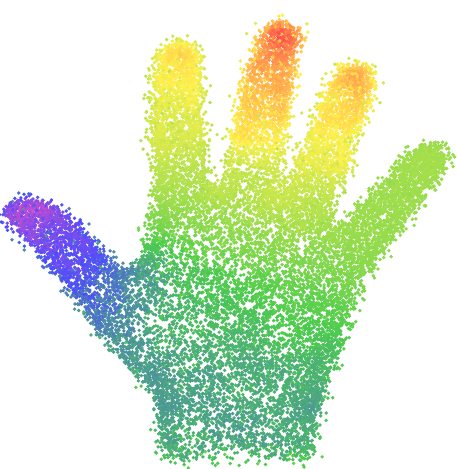}
    \vspace*{0.2cm}
  \end{subfigure}
  \hfill
  \begin{subfigure}[b]{0.22\textwidth}
    \includegraphics[width=\textwidth, trim={3.65cm, 5cm, 6.25cm, 4.7cm}, clip]{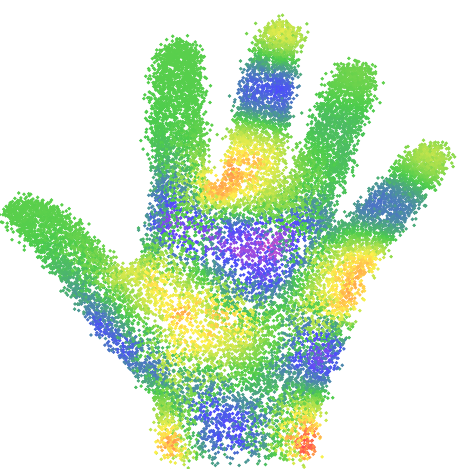}
    \vspace*{0.2cm}
  \end{subfigure}
  \hfill
  \begin{subfigure}[b]{0.22\textwidth}
    \includegraphics[width=\textwidth, trim={3.65cm, 5cm, 6.25cm, 4.7cm}, clip]{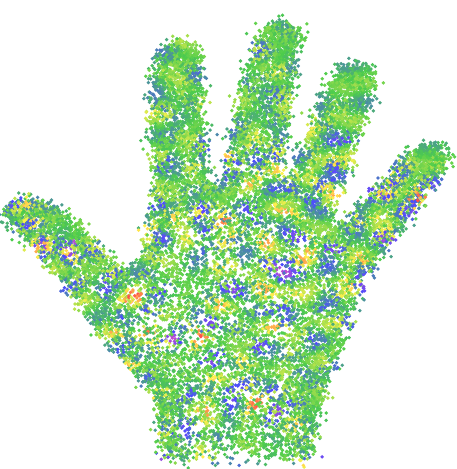}
    \vspace*{0.2cm}
  \end{subfigure}
  \hfill
  \begin{subfigure}[b]{0.3\textwidth}
    \includegraphics[width=\textwidth, trim={0, 0.7cm, 0, 0}, clip]{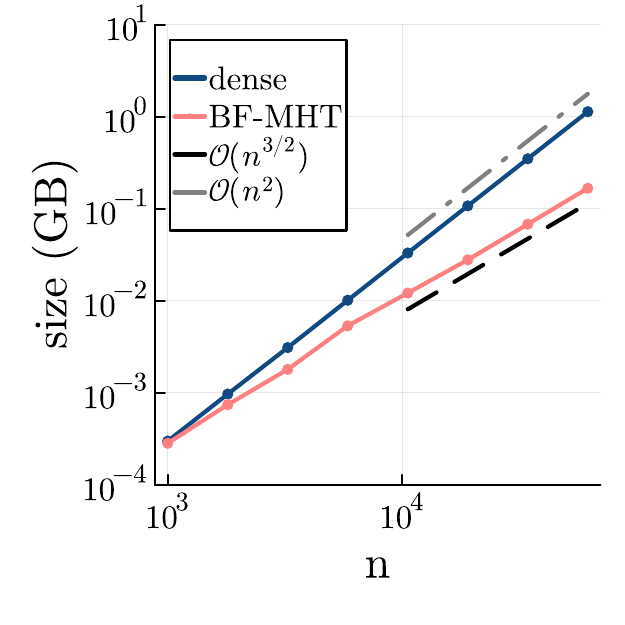}
  \end{subfigure}
  \caption{Three eigenfunctions of the weighted graph Laplacian for noisy data
  on the surface of a hand (left) and scaling of BF-MHT to compress the first $m
  = \clg{\frac{n}{50}}$ eigenfunctions to tolerance $\epsilon = 10^{-3}$
  (right).}
  \label{fig:hand}
\end{figure}

\section{Discussion}

In this work, we introduced a butterfly-accelerated manifold harmonic transform
for rapidly computing linear combinations of Laplace-Beltrami eigenfunctions on
arbitrary compact manifolds. We presented several empirical demonstrations that
show our algorithm requires $\sim n + m^{3/2}$ memory and time to map~$m$
coefficients to~$n$ point values on a selection of compact two-manifolds
in~$\R^3$ for moderate problem sizes. We provided bounds on the rank of the
Fourier transform between an annulus and a disk in two dimensions to prove that
the BF-MHT is $\bO(n + m^{5/3})$ to store and apply in the special case of $\M =
[-\pi, \pi]^2$. Rectifying the empirical results with the associated theoretical
analysis is a subject for future work.

There are several additional interesting directions which are the subject of
ongoing and future work. First, methods for reducing the eigenfunction
computation costs remain of significant practical importance, as this
precomputation requires~$\bO(nm)$ time in general. Of immediate interest are
multiresolution schemes in which low-frequency eigenfunctions are computed on
coarser discretizations, and multiscale approaches~\cite{bennighof2004automated}
in which the global eigenproblem is subdivided into local ones. In addition,
methods for reducing the ranks of compressed blocks by using alternative trees
or by reordering points and frequencies offer the possibility of reducing the
prefactor or even the asymptotic complexity of the BF-MHT. Recent work in this
direction indicates promise~\cite{su2025learning}.

\section*{Acknowledgments}

The authors would like to thank Dan Fortunato for providing technical guidance
and implementing additional requested features within his \texttt{SurfaceFun}
package.

\section*{Competing interests}
The authors report no competing interests.

\bibliographystyle{siamplain}
\bibliography{refs}

\end{document}

%% file: preamble.tex
\newcommand{\E}{\mathbb{E}}    
\newcommand{\Z}{\mathbb{Z}}    
\newcommand{\R}{\mathbb{R}}    
\newcommand{\bbR}{\R}
\newcommand{\C}{\mathbb{C}}    
\newcommand{\M}{\mathcal{M}}   
\newcommand{\bO}{\mathcal{O}}  
\newcommand{\cO}{\bO}
\newcommand{\bth}{\bm{\theta}} 
\newcommand{\abs}[1]{\left|#1\right|}      
\newcommand{\dif}[1]{\mathop{}\!\mathrm{d}{#1}}   
\newcommand{\norm}[2][]{\left\Vert#2\right\Vert_{#1}} 
\newcommand{\clg}[1]{{\left\lceil #1 \right\rceil}}   
\newcommand{\mat}[1]{\begin{bmatrix}#1\end{bmatrix}}  

\renewcommand{\epsilon}{\varepsilon}
\renewcommand{\phi}{\varphi}

\DeclareMathOperator*{\argmin}{arg\,min}   

\newcommand{\pder}[2]{\frac{\partial #1}{\partial #2}} 
\newcommand{\iid}{\mathrel{\overset{\scalebox{0.5}{\text{i.i.d.}}}{\scalebox{1.1}[1]{$\sim$}}}}
\DeclareMathOperator*{\vol}{vol}

\numberwithin{equation}{section}

\newcommand{\mtx}[1]{\bm{\mathsf{#1}}}

\newcommand{\vct}[1]{\bm{\mathsf{#1}}}

\newcommand{\LBO}{\Delta^{}_{\M}}

%% file: figures/BF-levels-tikz.tex
\begin{tikzpicture}[scale=0.27]
  \newcommand{\cs}{0.2}
  %

  \begin{scope}[shift={(0cm, 0cm)}]
    \foreach \x in {1, ..., 8} {
      \draw[fill=black] (\x - 0.5, 8.75) circle (\cs);
    }
    \foreach \x in {1, ..., 4} {
      \def \xnode {2*\x - 1};
      \begin{scope}[on background layer]
        \draw (\xnode - 0.5, 8.75) -- (\xnode, 9.5);
        \draw (\xnode + 0.5, 8.75) -- (\xnode, 9.5);
      \end{scope}
      \draw[fill=white] (\xnode, 9.5) circle (\cs);
    }
    \foreach \x in {1, ..., 2} {
      \def \xnode {4*\x - 2};
      \begin{scope}[on background layer]
        \draw (\xnode - 1, 9.5) -- (\xnode, 10.25);
        \draw (\xnode + 1, 9.5) -- (\xnode, 10.25);
      \end{scope}
      \draw[fill=white] (4*\x - 2, 10.25) circle (\cs);
    }
    \def \xnode {4};
    \begin{scope}[on background layer]
      \draw (\xnode - 2, 10.25) -- (\xnode, 11);
      \draw (\xnode + 2, 10.25) -- (\xnode, 11);
    \end{scope}
    \draw[fill=white] (4, 11) circle (\cs);
    \foreach \y in {1, ..., 8} {
      \draw[fill=white] (-0.75, \y - 0.5) circle (\cs);
    }
    \foreach \y in {1, ..., 4} {
      \def \ynode {2*\y - 1};
      \begin{scope}[on background layer]
        \draw (-0.75, \ynode - 0.5) -- (-1.5, \ynode);
        \draw (-0.75, \ynode + 0.5) -- (-1.5, \ynode);
      \end{scope}
      \draw[fill=white] (-1.5, \ynode) circle (\cs);
    }
    \foreach \y in {1, ..., 2} {
      \def \ynode {4*\y - 2};
      \begin{scope}[on background layer]
        \draw (-1.5, \ynode - 1) -- (-2.25, \ynode);
        \draw (-1.5, \ynode + 1) -- (-2.25, \ynode);
      \end{scope}
      \draw[fill=white] (-2.25, \ynode) circle (\cs);
    }
    \def \ynode {4};
    \begin{scope}[on background layer]
      \draw (-2.25, \ynode - 2) -- (-3, \ynode);
      \draw (-2.25, \ynode + 2) -- (-3, \ynode);
    \end{scope}
    \draw[fill=black] (-3, \ynode) circle (\cs);
    \foreach \y in {1, ..., 8} {
        \draw[fill=white] (\y - 1, 0) rectangle (\y, 8);
    }
    \draw (4, -1) node {Level 0};
  \end{scope}

  %

  \begin{scope}[shift={(12cm, 0cm)}]
    \foreach \x in {1, ..., 8} {
      \draw[fill=white] (\x - 0.5, 8.75) circle (\cs);
    }
    \foreach \x in {1, ..., 4} {
      \def \xnode {2*\x - 1};
      \begin{scope}[on background layer]
        \draw (\xnode - 0.5, 8.75) -- (\xnode, 9.5);
        \draw (\xnode + 0.5, 8.75) -- (\xnode, 9.5);
      \end{scope}
      \draw[fill=black] (\xnode, 9.5) circle (\cs);
    }
    \foreach \x in {1, ..., 2} {
      \def \xnode {4*\x - 2};
      \begin{scope}[on background layer]
        \draw (\xnode - 1, 9.5) -- (\xnode, 10.25);
        \draw (\xnode + 1, 9.5) -- (\xnode, 10.25);
      \end{scope}
      \draw[fill=white] (4*\x - 2, 10.25) circle (\cs);
    }
    \def \xnode {4};
    \begin{scope}[on background layer]
      \draw (\xnode - 2, 10.25) -- (\xnode, 11);
      \draw (\xnode + 2, 10.25) -- (\xnode, 11);
    \end{scope}
    \draw[fill=white] (4, 11) circle (\cs);
    \foreach \y in {1, ..., 8} {
      \draw[fill=white] (-0.75, \y - 0.5) circle (\cs);
    }
    \foreach \y in {1, ..., 4} {
      \def \ynode {2*\y - 1};
      \begin{scope}[on background layer]
        \draw (-0.75, \ynode - 0.5) -- (-1.5, \ynode);
        \draw (-0.75, \ynode + 0.5) -- (-1.5, \ynode);
      \end{scope}
      \draw[fill=white] (-1.5, \ynode) circle (\cs);
    }
    \foreach \y in {1, ..., 2} {
      \def \ynode {4*\y - 2};
      \begin{scope}[on background layer]
        \draw (-1.5, \ynode - 1) -- (-2.25, \ynode);
        \draw (-1.5, \ynode + 1) -- (-2.25, \ynode);
      \end{scope}
      \draw[fill=black] (-2.25, \ynode) circle (\cs);
    }
    \def \ynode {4};
    \begin{scope}[on background layer]
      \draw (-2.25, \ynode - 2) -- (-3, \ynode);
      \draw (-2.25, \ynode + 2) -- (-3, \ynode);
    \end{scope}
    \draw[fill=white] (-3, \ynode) circle (\cs);
    \foreach \x in {1, ..., 2} {
        \foreach \y in {1, ..., 4} {
            \draw[fill=white] (2*\y - 2, 4*\x - 4) rectangle (2*\y, 4*\x);
        }
    }
    \draw (4, -1) node {Level 1};
  \end{scope}

  %

  \begin{scope}[shift={(24cm, 0cm)}]
    \foreach \x in {1, ..., 8} {
      \draw[fill=white] (\x - 0.5, 8.75) circle (\cs);
    }
    \foreach \x in {1, ..., 4} {
      \def \xnode {2*\x - 1};
      \begin{scope}[on background layer]
        \draw (\xnode - 0.5, 8.75) -- (\xnode, 9.5);
        \draw (\xnode + 0.5, 8.75) -- (\xnode, 9.5);
      \end{scope}
      \draw[fill=white] (\xnode, 9.5) circle (\cs);
    }
    \foreach \x in {1, ..., 2} {
      \def \xnode {4*\x - 2};
      \begin{scope}[on background layer]
        \draw (\xnode - 1, 9.5) -- (\xnode, 10.25);
        \draw (\xnode + 1, 9.5) -- (\xnode, 10.25);
      \end{scope}
      \draw[fill=black] (4*\x - 2, 10.25) circle (\cs);
    }
    \def \xnode {4};
    \begin{scope}[on background layer]
      \draw (\xnode - 2, 10.25) -- (\xnode, 11);
      \draw (\xnode + 2, 10.25) -- (\xnode, 11);
    \end{scope}
    \draw[fill=white] (4, 11) circle (\cs);
    \foreach \y in {1, ..., 8} {
      \draw[fill=white] (-0.75, \y - 0.5) circle (\cs);
    }
    \foreach \y in {1, ..., 4} {
      \def \ynode {2*\y - 1};
      \begin{scope}[on background layer]
        \draw (-0.75, \ynode - 0.5) -- (-1.5, \ynode);
        \draw (-0.75, \ynode + 0.5) -- (-1.5, \ynode);
      \end{scope}
      \draw[fill=black] (-1.5, \ynode) circle (\cs);
    }
    \foreach \y in {1, ..., 2} {
      \def \ynode {4*\y - 2};
      \begin{scope}[on background layer]
        \draw (-1.5, \ynode - 1) -- (-2.25, \ynode);
        \draw (-1.5, \ynode + 1) -- (-2.25, \ynode);
      \end{scope}
      \draw[fill=white] (-2.25, \ynode) circle (\cs);
    }
    \def \ynode {4};
    \begin{scope}[on background layer]
      \draw (-2.25, \ynode - 2) -- (-3, \ynode);
      \draw (-2.25, \ynode + 2) -- (-3, \ynode);
    \end{scope}
    \draw[fill=white] (-3, \ynode) circle (\cs);
    \foreach \x in {1, ..., 4} {
        \foreach \y in {1, ..., 2} {
            \draw[fill=white] (4*\y - 4, 2*\x - 2) rectangle (4*\y, 2*\x);
        }
    }
    \draw (4, -1) node {Level 2};
  \end{scope}

  %

  \begin{scope}[shift={(36cm, 0cm)}]
    \foreach \x in {1, ..., 8} {
      \draw[fill=white] (\x - 0.5, 8.75) circle (\cs);
    }
    \foreach \x in {1, ..., 4} {
      \def \xnode {2*\x - 1};
      \begin{scope}[on background layer]
        \draw (\xnode - 0.5, 8.75) -- (\xnode, 9.5);
        \draw (\xnode + 0.5, 8.75) -- (\xnode, 9.5);
      \end{scope}
      \draw[fill=white] (\xnode, 9.5) circle (\cs);
    }
    \foreach \x in {1, ..., 2} {
      \def \xnode {4*\x - 2};
      \begin{scope}[on background layer]
        \draw (\xnode - 1, 9.5) -- (\xnode, 10.25);
        \draw (\xnode + 1, 9.5) -- (\xnode, 10.25);
      \end{scope}
      \draw[fill=white] (4*\x - 2, 10.25) circle (\cs);
    }
    \def \xnode {4};
    \begin{scope}[on background layer]
      \draw (\xnode - 2, 10.25) -- (\xnode, 11);
      \draw (\xnode + 2, 10.25) -- (\xnode, 11);
    \end{scope}
    \draw[fill=black] (4, 11) circle (\cs);
    \foreach \y in {1, ..., 8} {
      \draw[fill=black] (-0.75, \y - 0.5) circle (\cs);
    }
    \foreach \y in {1, ..., 4} {
      \def \ynode {2*\y - 1};
      \begin{scope}[on background layer]
        \draw (-0.75, \ynode - 0.5) -- (-1.5, \ynode);
        \draw (-0.75, \ynode + 0.5) -- (-1.5, \ynode);
      \end{scope}
      \draw[fill=white] (-1.5, \ynode) circle (\cs);
    }
    \foreach \y in {1, ..., 2} {
      \def \ynode {4*\y - 2};
      \begin{scope}[on background layer]
        \draw (-1.5, \ynode - 1) -- (-2.25, \ynode);
        \draw (-1.5, \ynode + 1) -- (-2.25, \ynode);
      \end{scope}
      \draw[fill=white] (-2.25, \ynode) circle (\cs);
    }
    \def \ynode {4};
    \begin{scope}[on background layer]
      \draw (-2.25, \ynode - 2) -- (-3, \ynode);
      \draw (-2.25, \ynode + 2) -- (-3, \ynode);
    \end{scope}
    \draw[fill=white] (-3, \ynode) circle (\cs);
    \foreach \x in {1, ..., 8} {
        \draw[fill=white] (0, \x - 1) rectangle (8, \x);
    }
    \draw (4, -1) node {Level 3};
  \end{scope}
\end{tikzpicture}


%% file: figures/standard-BF-algo.tex
\SetKwFunction{ButterflyFactorization}{ButterflyFactorization}
\Function{\ButterflyFactorization{$\mtx{\Phi}$, $T_x$, $T_\lambda$}}{
    \For{leaf node $\nu$ in $T_\lambda$}{
        $\mtx{U}^{}_{:\nu} \mtx{V}_{:\nu}^* = \mtx{\Phi}(:,\nu)$
    }
    \For{level $\ell = 1,\dots,L$}{
        \For{node $\nu$ at level $\ell$ of $T_\lambda$}{
            Let $c_1$ and $c_2$ be the children of $\nu$ \\
            \For{node $\tau$ at level $L-\ell$ of $T_x$}{
                Let $p$ be the parent of $\tau$ \\
                $\mtx{U}_{\tau\nu}^{} \mtx{R}_{\tau\nu}^* = \mat{\mtx{U}_{pc_1}(\tau, :) & \mtx{U}_{pc_2}(\tau, :)}$ \\
            }
        }
    }
    \Return{$\left\{\hspace*{-2mm}
    \begin{tabular}{l@{}}
    Row basis $\mtx{V}_{:\nu}$ for each leaf node $\nu$ in $T_\lambda$ \\
    Transfer matrix $\mtx{R}_{\tau\nu}$ for each block at levels $\ell=1,\dots,L$ \\
    Column basis $\mtx{U}_{\tau:}$ for each leaf node $\tau$ in $T_x$
    \end{tabular}
    \right\}$
    }
}

%% file: figures/fiedler-algo.tex
\newcommand{\minsize}{\texttt{\upshape \footnotesize min\_size}}
\SetKwFunction{FiedlerTree}{FiedlerTree}
\Function{\FiedlerTree{$\M$, $\{\bm{x}_j\}_{j=1}^n$}}{
    \uIf{$n < \minsize$}{
        \Return{$\{\bm{x}_j\}_{j=1}^n$}
    }
    Compute second eigenfunction $\phi_2$ of $\Delta_{\M}$ \\
    \uIf{discretization of $\M$ consists of patches $P$}{
        $\M_1 := \left\{P \in \M \, : \, \int_P \phi_2(\bm{x}) \dif{\bm{x}} \leq 0 \right\}$ \\ 
    }
    \uElseIf{representation of $\M$ is mesh-free}{
        $\M_1 := \left\{1 \leq j \leq n \, : \, \phi_2(\bm{x}_j) \leq 0 \right\}$ \\ 
    }
    $\M_2 := \M \setminus \M_1$ \\
    $X_1 := \left\{\bm{x}_j \text{ for } j=1,\dots,n \, : \, \bm{x}_j \in \M_1  \right\}$ \\
    $X_2 := \{\bm{x}_j\}_{j=1}^n \setminus X_1$ \\
    \Return{\big\{\FiedlerTree{$\M_1$, $X_1$}, \FiedlerTree{$\M_2$, $X_2$}\big\}}
}

%% file: figures/streaming-BF-algo.tex
\SetKwFunction{StreamingButterflyFactorization}{StreamingButterflyFactorization}
\Function{\StreamingButterflyFactorization{$k \to \mtx{\phi}_k$, $T_x$, $T_\lambda$}}{
    \For{node $\nu$ in a post-order traversal of $T_\lambda$}{
        \uIf{$\nu$ is a leaf node}{
            Compute $\mtx{\Phi}(:,\nu)$ \\
            $\mtx{U}^{}_{:\nu} \mtx{V}_{:\nu}^* = \mtx{\Phi}(:,\nu)$
        }
        \uElse{
            Let $\ell$ be the level of $\nu$ \\
            Let $c_1$ and $c_2$ be the children of $\nu$ \\
            \For{node $\tau$ at level $L-\ell$ of $T_x$}{
                Let $p$ be the parent of $\tau$ \\
                $\mtx{U}_{\tau\nu}^{} \mtx{R}_{\tau\nu}^* = \mat{\mtx{U}_{pc_1}(\tau, :) & \mtx{U}_{pc_2}(\tau, :)}$
            }
            \For{node $p$ at level $L-\ell-1$ of $T_x$}{
                Free memory associated with $\mtx{U}_{pc_1}$ and $\mtx{U}_{pc_2}$
            }
        }
    }
    \Return{$\left\{\hspace*{-2mm}
    \begin{tabular}{l@{}}
    Row basis $\mtx{V}_{:\nu}$ for each leaf node $\nu$ in $T_\lambda$ \\
    Transfer matrix $\mtx{R}_{\tau\nu}$ for each block at levels $\ell=1,\dots,L$ \\
    Column basis $\mtx{U}_{\tau:}$ for each leaf node $\tau$ in $T_x$
    \end{tabular}
    \right\}$
    }
}

%% file: figures/streaming-BF-tikz.tex
\begin{tikzpicture}[scale=0.27]
  \newcommand{\cs}{0.2}
  %

  \begin{scope}[shift={(0cm, 0cm)}]
    \draw[fill=black] (-1.7, 8.5) rectangle (-0.5, 9.7);
    \draw[color=white] (-1.1, 9.1) node {1};
    \draw (4, 12) node {$\lambda$};
    \draw (-4, 4) node {\rotatebox{90}{$\bm{x}$}};
    \foreach \x in {1, ..., 8} {
      \draw[fill=white] (\x - 0.5, 8.75) circle (\cs);
    }
    \foreach \x in {1, ..., 4} {
      \def \xnode {2*\x - 1};
      \begin{scope}[on background layer]
        \draw (\xnode - 0.5, 8.75) -- (\xnode, 9.5);
        \draw (\xnode + 0.5, 8.75) -- (\xnode, 9.5);
      \end{scope}
      \draw[fill=white] (\xnode, 9.5) circle (\cs);
    }
    \foreach \x in {1, ..., 2} {
      \def \xnode {4*\x - 2};
      \begin{scope}[on background layer]
        \draw (\xnode - 1, 9.5) -- (\xnode, 10.25);
        \draw (\xnode + 1, 9.5) -- (\xnode, 10.25);
      \end{scope}
      \draw[fill=white] (4*\x - 2, 10.25) circle (\cs);
    }
    \def \xnode {4};
    \begin{scope}[on background layer]
      \draw (\xnode - 2, 10.25) -- (\xnode, 11);
      \draw (\xnode + 2, 10.25) -- (\xnode, 11);
    \end{scope}
    \draw[fill=white] (4, 11) circle (\cs);
    \foreach \y in {1, ..., 8} {
      \draw[fill=white] (-0.75, \y - 0.5) circle (\cs);
    }
    \foreach \y in {1, ..., 4} {
      \def \ynode {2*\y - 1};
      \begin{scope}[on background layer]
        \draw (-0.75, \ynode - 0.5) -- (-1.5, \ynode);
        \draw (-0.75, \ynode + 0.5) -- (-1.5, \ynode);
      \end{scope}
      \draw[fill=white] (-1.5, \ynode) circle (\cs);
    }
    \foreach \y in {1, ..., 2} {
      \def \ynode {4*\y - 2};
      \begin{scope}[on background layer]
        \draw (-1.5, \ynode - 1) -- (-2.25, \ynode);
        \draw (-1.5, \ynode + 1) -- (-2.25, \ynode);
      \end{scope}
      \draw[fill=white] (-2.25, \ynode) circle (\cs);
    }
    \def \ynode {4};
    \begin{scope}[on background layer]
      \draw (-2.25, \ynode - 2) -- (-3, \ynode);
      \draw (-2.25, \ynode + 2) -- (-3, \ynode);
    \end{scope}
    \draw[fill=white] (-3, \ynode) circle (\cs);
    \draw[fill=gray!30!white] (0, 0) rectangle (8, 8);
  \end{scope}

  %

  \begin{scope}[shift={(12cm, 0cm)}]
    \draw[fill=black] (-1.7, 8.5) rectangle (-0.5, 9.7);
    \draw[color=white] (-1.1, 9.1) node {2};
    \foreach \x in {1, ..., 8} {
      \draw[fill=white] (\x - 0.5, 8.75) circle (\cs);
    }
    \foreach \x in {1, ..., 4} {
      \def \xnode {2*\x - 1};
      \begin{scope}[on background layer]
        \draw (\xnode - 0.5, 8.75) -- (\xnode, 9.5);
        \draw (\xnode + 0.5, 8.75) -- (\xnode, 9.5);
      \end{scope}
      \draw[fill=white] (\xnode, 9.5) circle (\cs);
    }
    \foreach \x in {1, ..., 2} {
      \def \xnode {4*\x - 2};
      \begin{scope}[on background layer]
        \draw (\xnode - 1, 9.5) -- (\xnode, 10.25);
        \draw (\xnode + 1, 9.5) -- (\xnode, 10.25);
      \end{scope}
      \draw[fill=white] (4*\x - 2, 10.25) circle (\cs);
    }
    \def \xnode {4};
    \begin{scope}[on background layer]
      \draw (\xnode - 2, 10.25) -- (\xnode, 11);
      \draw (\xnode + 2, 10.25) -- (\xnode, 11);
    \end{scope}
    \draw[fill=white] (4, 11) circle (\cs);
    \foreach \y in {1, ..., 8} {
      \draw[fill=white] (-0.75, \y - 0.5) circle (\cs);
    }
    \foreach \y in {1, ..., 4} {
      \def \ynode {2*\y - 1};
      \begin{scope}[on background layer]
        \draw (-0.75, \ynode - 0.5) -- (-1.5, \ynode);
        \draw (-0.75, \ynode + 0.5) -- (-1.5, \ynode);
      \end{scope}
      \draw[fill=white] (-1.5, \ynode) circle (\cs);
    }
    \foreach \y in {1, ..., 2} {
      \def \ynode {4*\y - 2};
      \begin{scope}[on background layer]
        \draw (-1.5, \ynode - 1) -- (-2.25, \ynode);
        \draw (-1.5, \ynode + 1) -- (-2.25, \ynode);
      \end{scope}
      \draw[fill=white] (-2.25, \ynode) circle (\cs);
    }
    \def \ynode {4};
    \begin{scope}[on background layer]
      \draw (-2.25, \ynode - 2) -- (-3, \ynode);
      \draw (-2.25, \ynode + 2) -- (-3, \ynode);
    \end{scope}
    \draw[fill=white] (-3, \ynode) circle (\cs);
    \draw[fill=black] (0.5, 8.75) circle (\cs);
    \draw[fill=black] (-3, 4) circle (\cs);
    \draw[fill=gray!30!white] (0, 0) rectangle (8, 8);
    \draw[fill=white] (0, 0) rectangle (1, 8);
  \end{scope}

  %

  \begin{scope}[shift={(24cm, 0cm)}]
    \draw[fill=black] (-1.7, 8.5) rectangle (-0.5, 9.7);
    \draw[color=white] (-1.1, 9.1) node {3};
    \foreach \x in {1, ..., 8} {
      \draw[fill=white] (\x - 0.5, 8.75) circle (\cs);
    }
    \foreach \x in {1, ..., 4} {
      \def \xnode {2*\x - 1};
      \begin{scope}[on background layer]
        \draw (\xnode - 0.5, 8.75) -- (\xnode, 9.5);
        \draw (\xnode + 0.5, 8.75) -- (\xnode, 9.5);
      \end{scope}
      \draw[fill=white] (\xnode, 9.5) circle (\cs);
    }
    \foreach \x in {1, ..., 2} {
      \def \xnode {4*\x - 2};
      \begin{scope}[on background layer]
        \draw (\xnode - 1, 9.5) -- (\xnode, 10.25);
        \draw (\xnode + 1, 9.5) -- (\xnode, 10.25);
      \end{scope}
      \draw[fill=white] (4*\x - 2, 10.25) circle (\cs);
    }
    \def \xnode {4};
    \begin{scope}[on background layer]
      \draw (\xnode - 2, 10.25) -- (\xnode, 11);
      \draw (\xnode + 2, 10.25) -- (\xnode, 11);
    \end{scope}
    \draw[fill=white] (4, 11) circle (\cs);
    \foreach \y in {1, ..., 8} {
      \draw[fill=white] (-0.75, \y - 0.5) circle (\cs);
    }
    \foreach \y in {1, ..., 4} {
      \def \ynode {2*\y - 1};
      \begin{scope}[on background layer]
        \draw (-0.75, \ynode - 0.5) -- (-1.5, \ynode);
        \draw (-0.75, \ynode + 0.5) -- (-1.5, \ynode);
      \end{scope}
      \draw[fill=white] (-1.5, \ynode) circle (\cs);
    }
    \foreach \y in {1, ..., 2} {
      \def \ynode {4*\y - 2};
      \begin{scope}[on background layer]
        \draw (-1.5, \ynode - 1) -- (-2.25, \ynode);
        \draw (-1.5, \ynode + 1) -- (-2.25, \ynode);
      \end{scope}
      \draw[fill=white] (-2.25, \ynode) circle (\cs);
    }
    \def \ynode {4};
    \begin{scope}[on background layer]
      \draw (-2.25, \ynode - 2) -- (-3, \ynode);
      \draw (-2.25, \ynode + 2) -- (-3, \ynode);
    \end{scope}
    \draw[fill=white] (-3, \ynode) circle (\cs);
    \draw[fill=black] (1.5, 8.75) circle (\cs);
    \draw[fill=black] (-3, 4) circle (\cs);
    \draw[fill=gray!30!white] (0, 0) rectangle (8, 8);
    \draw[fill=white] (0, 0) rectangle (1, 8);
    \draw[fill=white] (1, 0) rectangle (2, 8);
  \end{scope}

  %

  \begin{scope}[shift={(36cm, 0cm)}]
    \draw[fill=black] (-1.7, 8.5) rectangle (-0.5, 9.7);
    \draw[color=white] (-1.1, 9.1) node {4};
    \foreach \x in {1, ..., 8} {
      \draw[fill=white] (\x - 0.5, 8.75) circle (\cs);
    }
    \foreach \x in {1, ..., 4} {
      \def \xnode {2*\x - 1};
      \begin{scope}[on background layer]
        \draw (\xnode - 0.5, 8.75) -- (\xnode, 9.5);
        \draw (\xnode + 0.5, 8.75) -- (\xnode, 9.5);
      \end{scope}
      \draw[fill=white] (\xnode, 9.5) circle (\cs);
    }
    \foreach \x in {1, ..., 2} {
      \def \xnode {4*\x - 2};
      \begin{scope}[on background layer]
        \draw (\xnode - 1, 9.5) -- (\xnode, 10.25);
        \draw (\xnode + 1, 9.5) -- (\xnode, 10.25);
      \end{scope}
      \draw[fill=white] (4*\x - 2, 10.25) circle (\cs);
    }
    \def \xnode {4};
    \begin{scope}[on background layer]
      \draw (\xnode - 2, 10.25) -- (\xnode, 11);
      \draw (\xnode + 2, 10.25) -- (\xnode, 11);
    \end{scope}
    \draw[fill=white] (4, 11) circle (\cs);
    \foreach \y in {1, ..., 8} {
      \draw[fill=white] (-0.75, \y - 0.5) circle (\cs);
    }
    \foreach \y in {1, ..., 4} {
      \def \ynode {2*\y - 1};
      \begin{scope}[on background layer]
        \draw (-0.75, \ynode - 0.5) -- (-1.5, \ynode);
        \draw (-0.75, \ynode + 0.5) -- (-1.5, \ynode);
      \end{scope}
      \draw[fill=white] (-1.5, \ynode) circle (\cs);
    }
    \foreach \y in {1, ..., 2} {
      \def \ynode {4*\y - 2};
      \begin{scope}[on background layer]
        \draw (-1.5, \ynode - 1) -- (-2.25, \ynode);
        \draw (-1.5, \ynode + 1) -- (-2.25, \ynode);
      \end{scope}
      \draw[fill=white] (-2.25, \ynode) circle (\cs);
    }
    \def \ynode {4};
    \begin{scope}[on background layer]
      \draw (-2.25, \ynode - 2) -- (-3, \ynode);
      \draw (-2.25, \ynode + 2) -- (-3, \ynode);
    \end{scope}
    \draw[fill=white] (-3, \ynode) circle (\cs);
    \draw[fill=black] (1, 9.5) circle (\cs);
    \draw[fill=black] (-2.25, 6) circle (\cs);
    \draw[fill=black] (-2.25, 2) circle (\cs);
    \draw[fill=gray!30!white] (0, 0) rectangle (8, 8);
    \draw[fill=white] (0, 0) rectangle (2, 4);
    \draw[fill=white] (0, 4) rectangle (2, 8);
  \end{scope}

  %

  \begin{scope}[shift={(0cm, -12cm)}]
    \draw[fill=black] (-1.7, 8.5) rectangle (-0.5, 9.7);
    \draw[color=white] (-1.1, 9.1) node {5};
    \foreach \x in {1, ..., 8} {
      \draw[fill=white] (\x - 0.5, 8.75) circle (\cs);
    }
    \foreach \x in {1, ..., 4} {
      \def \xnode {2*\x - 1};
      \begin{scope}[on background layer]
        \draw (\xnode - 0.5, 8.75) -- (\xnode, 9.5);
        \draw (\xnode + 0.5, 8.75) -- (\xnode, 9.5);
      \end{scope}
      \draw[fill=white] (\xnode, 9.5) circle (\cs);
    }
    \foreach \x in {1, ..., 2} {
      \def \xnode {4*\x - 2};
      \begin{scope}[on background layer]
        \draw (\xnode - 1, 9.5) -- (\xnode, 10.25);
        \draw (\xnode + 1, 9.5) -- (\xnode, 10.25);
      \end{scope}
      \draw[fill=white] (4*\x - 2, 10.25) circle (\cs);
    }
    \def \xnode {4};
    \begin{scope}[on background layer]
      \draw (\xnode - 2, 10.25) -- (\xnode, 11);
      \draw (\xnode + 2, 10.25) -- (\xnode, 11);
    \end{scope}
    \draw[fill=white] (4, 11) circle (\cs);
    \foreach \y in {1, ..., 8} {
      \draw[fill=white] (-0.75, \y - 0.5) circle (\cs);
    }
    \foreach \y in {1, ..., 4} {
      \def \ynode {2*\y - 1};
      \begin{scope}[on background layer]
        \draw (-0.75, \ynode - 0.5) -- (-1.5, \ynode);
        \draw (-0.75, \ynode + 0.5) -- (-1.5, \ynode);
      \end{scope}
      \draw[fill=white] (-1.5, \ynode) circle (\cs);
    }
    \foreach \y in {1, ..., 2} {
      \def \ynode {4*\y - 2};
      \begin{scope}[on background layer]
        \draw (-1.5, \ynode - 1) -- (-2.25, \ynode);
        \draw (-1.5, \ynode + 1) -- (-2.25, \ynode);
      \end{scope}
      \draw[fill=white] (-2.25, \ynode) circle (\cs);
    }
    \def \ynode {4};
    \begin{scope}[on background layer]
      \draw (-2.25, \ynode - 2) -- (-3, \ynode);
      \draw (-2.25, \ynode + 2) -- (-3, \ynode);
    \end{scope}
    \draw[fill=white] (-3, \ynode) circle (\cs);
    \draw[fill=black] (2.5, 8.75) circle (\cs);
    \draw[fill=black] (-3, 4) circle (\cs);
    \draw[fill=gray!30!white] (0, 0) rectangle (8, 8);
    \draw[fill=white] (0, 0) rectangle (2, 4);
    \draw[fill=white] (0, 4) rectangle (2, 8);
    \draw[fill=white] (2, 0) rectangle (3, 8);
  \end{scope}

  %

  \begin{scope}[shift={(12cm, -12cm)}]
    \draw[fill=black] (-1.7, 8.5) rectangle (-0.5, 9.7);
    \draw[color=white] (-1.1, 9.1) node {6};
    \foreach \x in {1, ..., 8} {
      \draw[fill=white] (\x - 0.5, 8.75) circle (\cs);
    }
    \foreach \x in {1, ..., 4} {
      \def \xnode {2*\x - 1};
      \begin{scope}[on background layer]
        \draw (\xnode - 0.5, 8.75) -- (\xnode, 9.5);
        \draw (\xnode + 0.5, 8.75) -- (\xnode, 9.5);
      \end{scope}
      \draw[fill=white] (\xnode, 9.5) circle (\cs);
    }
    \foreach \x in {1, ..., 2} {
      \def \xnode {4*\x - 2};
      \begin{scope}[on background layer]
        \draw (\xnode - 1, 9.5) -- (\xnode, 10.25);
        \draw (\xnode + 1, 9.5) -- (\xnode, 10.25);
      \end{scope}
      \draw[fill=white] (4*\x - 2, 10.25) circle (\cs);
    }
    \def \xnode {4};
    \begin{scope}[on background layer]
      \draw (\xnode - 2, 10.25) -- (\xnode, 11);
      \draw (\xnode + 2, 10.25) -- (\xnode, 11);
    \end{scope}
    \draw[fill=white] (4, 11) circle (\cs);
    \foreach \y in {1, ..., 8} {
      \draw[fill=white] (-0.75, \y - 0.5) circle (\cs);
    }
    \foreach \y in {1, ..., 4} {
      \def \ynode {2*\y - 1};
      \begin{scope}[on background layer]
        \draw (-0.75, \ynode - 0.5) -- (-1.5, \ynode);
        \draw (-0.75, \ynode + 0.5) -- (-1.5, \ynode);
      \end{scope}
      \draw[fill=white] (-1.5, \ynode) circle (\cs);
    }
    \foreach \y in {1, ..., 2} {
      \def \ynode {4*\y - 2};
      \begin{scope}[on background layer]
        \draw (-1.5, \ynode - 1) -- (-2.25, \ynode);
        \draw (-1.5, \ynode + 1) -- (-2.25, \ynode);
      \end{scope}
      \draw[fill=white] (-2.25, \ynode) circle (\cs);
    }
    \def \ynode {4};
    \begin{scope}[on background layer]
      \draw (-2.25, \ynode - 2) -- (-3, \ynode);
      \draw (-2.25, \ynode + 2) -- (-3, \ynode);
    \end{scope}
    \draw[fill=white] (-3, \ynode) circle (\cs);
    \draw[fill=black] (3.5, 8.75) circle (\cs);
    \draw[fill=black] (-3, 4) circle (\cs);
    \draw[fill=gray!30!white] (0, 0) rectangle (8, 8);
    \draw[fill=white] (0, 0) rectangle (2, 4);
    \draw[fill=white] (0, 4) rectangle (2, 8);
    \draw[fill=white] (2, 0) rectangle (3, 8);
    \draw[fill=white] (3, 0) rectangle (4, 8);
  \end{scope}

  %

  \begin{scope}[shift={(24cm, -12cm)}]
    \draw[fill=black] (-1.7, 8.5) rectangle (-0.5, 9.7);
    \draw[color=white] (-1.1, 9.1) node {7};
    \foreach \x in {1, ..., 8} {
      \draw[fill=white] (\x - 0.5, 8.75) circle (\cs);
    }
    \foreach \x in {1, ..., 4} {
      \def \xnode {2*\x - 1};
      \begin{scope}[on background layer]
        \draw (\xnode - 0.5, 8.75) -- (\xnode, 9.5);
        \draw (\xnode + 0.5, 8.75) -- (\xnode, 9.5);
      \end{scope}
      \draw[fill=white] (\xnode, 9.5) circle (\cs);
    }
    \foreach \x in {1, ..., 2} {
      \def \xnode {4*\x - 2};
      \begin{scope}[on background layer]
        \draw (\xnode - 1, 9.5) -- (\xnode, 10.25);
        \draw (\xnode + 1, 9.5) -- (\xnode, 10.25);
      \end{scope}
      \draw[fill=white] (4*\x - 2, 10.25) circle (\cs);
    }
    \def \xnode {4};
    \begin{scope}[on background layer]
      \draw (\xnode - 2, 10.25) -- (\xnode, 11);
      \draw (\xnode + 2, 10.25) -- (\xnode, 11);
    \end{scope}
    \draw[fill=white] (4, 11) circle (\cs);
    \foreach \y in {1, ..., 8} {
      \draw[fill=white] (-0.75, \y - 0.5) circle (\cs);
    }
    \foreach \y in {1, ..., 4} {
      \def \ynode {2*\y - 1};
      \begin{scope}[on background layer]
        \draw (-0.75, \ynode - 0.5) -- (-1.5, \ynode);
        \draw (-0.75, \ynode + 0.5) -- (-1.5, \ynode);
      \end{scope}
      \draw[fill=white] (-1.5, \ynode) circle (\cs);
    }
    \foreach \y in {1, ..., 2} {
      \def \ynode {4*\y - 2};
      \begin{scope}[on background layer]
        \draw (-1.5, \ynode - 1) -- (-2.25, \ynode);
        \draw (-1.5, \ynode + 1) -- (-2.25, \ynode);
      \end{scope}
      \draw[fill=white] (-2.25, \ynode) circle (\cs);
    }
    \def \ynode {4};
    \begin{scope}[on background layer]
      \draw (-2.25, \ynode - 2) -- (-3, \ynode);
      \draw (-2.25, \ynode + 2) -- (-3, \ynode);
    \end{scope}
    \draw[fill=white] (-3, \ynode) circle (\cs);
    \draw[fill=black] (3, 9.5) circle (\cs);
    \draw[fill=black] (-2.25, 6) circle (\cs);
    \draw[fill=black] (-2.25, 2) circle (\cs);
    \draw[fill=gray!30!white] (0, 0) rectangle (8, 8);
    \draw[fill=white] (0, 0) rectangle (2, 4);
    \draw[fill=white] (2, 0) rectangle (4, 4);
    \draw[fill=white] (0, 4) rectangle (2, 8);
    \draw[fill=white] (2, 4) rectangle (4, 8);
  \end{scope}

  %

  \begin{scope}[shift={(36cm, -12cm)}]
    \draw[fill=black] (-1.7, 8.5) rectangle (-0.5, 9.7);
    \draw[color=white] (-1.1, 9.1) node {8};
    \foreach \x in {1, ..., 8} {
      \draw[fill=white] (\x - 0.5, 8.75) circle (\cs);
    }
    \foreach \x in {1, ..., 4} {
      \def \xnode {2*\x - 1};
      \begin{scope}[on background layer]
        \draw (\xnode - 0.5, 8.75) -- (\xnode, 9.5);
        \draw (\xnode + 0.5, 8.75) -- (\xnode, 9.5);
      \end{scope}
      \draw[fill=white] (\xnode, 9.5) circle (\cs);
    }
    \foreach \x in {1, ..., 2} {
      \def \xnode {4*\x - 2};
      \begin{scope}[on background layer]
        \draw (\xnode - 1, 9.5) -- (\xnode, 10.25);
        \draw (\xnode + 1, 9.5) -- (\xnode, 10.25);
      \end{scope}
      \draw[fill=white] (4*\x - 2, 10.25) circle (\cs);
    }
    \def \xnode {4};
    \begin{scope}[on background layer]
      \draw (\xnode - 2, 10.25) -- (\xnode, 11);
      \draw (\xnode + 2, 10.25) -- (\xnode, 11);
    \end{scope}
    \draw[fill=white] (4, 11) circle (\cs);
    \foreach \y in {1, ..., 8} {
      \draw[fill=white] (-0.75, \y - 0.5) circle (\cs);
    }
    \foreach \y in {1, ..., 4} {
      \def \ynode {2*\y - 1};
      \begin{scope}[on background layer]
        \draw (-0.75, \ynode - 0.5) -- (-1.5, \ynode);
        \draw (-0.75, \ynode + 0.5) -- (-1.5, \ynode);
      \end{scope}
      \draw[fill=white] (-1.5, \ynode) circle (\cs);
    }
    \foreach \y in {1, ..., 2} {
      \def \ynode {4*\y - 2};
      \begin{scope}[on background layer]
        \draw (-1.5, \ynode - 1) -- (-2.25, \ynode);
        \draw (-1.5, \ynode + 1) -- (-2.25, \ynode);
      \end{scope}
      \draw[fill=white] (-2.25, \ynode) circle (\cs);
    }
    \def \ynode {4};
    \begin{scope}[on background layer]
      \draw (-2.25, \ynode - 2) -- (-3, \ynode);
      \draw (-2.25, \ynode + 2) -- (-3, \ynode);
    \end{scope}
    \draw[fill=white] (-3, \ynode) circle (\cs);
    \draw[fill=black] (2, 10.25) circle (\cs);
    \draw[fill=black] (-1.5, 7) circle (\cs);
    \draw[fill=black] (-1.5, 5) circle (\cs);
    \draw[fill=black] (-1.5, 3) circle (\cs);
    \draw[fill=black] (-1.5, 1) circle (\cs);
    \draw[fill=gray!30!white] (0, 0) rectangle (8, 8);
    \draw[fill=white] (0, 0) rectangle (4, 2);
    \draw[fill=white] (0, 2) rectangle (4, 4);
    \draw[fill=white] (0, 4) rectangle (4, 6);
    \draw[fill=white] (0, 6) rectangle (4, 8);
  \end{scope}
\end{tikzpicture}


%% file: figures/annulus-to-disk-tikz.tex
\begin{tikzpicture}[scale=0.2]
  \newcommand{\cs}{0.2}

  \begin{scope}[shift={(0cm, 0cm)}]
    \draw[fill=blue!30!white] (0,0) circle (7.5);
    \draw[fill=white] (0,0) circle (5.5);

    \draw (5.4,0) node[anchor=south east] {\small $\sqrt{\rho_{k-1}}$}; 
    \draw (6.8,0) node[anchor=south west] {\small $\sqrt{\rho_{k}}$}; 

    \foreach \x/\y in {
        6/0, 7/0, 
        6/1, 7/1,
        6/2, 7/2,
        5/3, 6/3,
        4/4, 5/4, 6/4,
        3/5, 4/5, 5/5,
        1/6, 2/6, 3/6, 4/6,
        1/7, 2/7}{
        \draw[blue!30!white,fill=black] (\x,\y) circle (\cs);
        \draw[blue!30!white,fill=black] (-1*\y,\x) circle (\cs);
        \draw[blue!30!white,fill=black] (-1*\x,-1*\y) circle (\cs);
        \draw[blue!30!white,fill=black] (\y,-1*\x) circle (\cs);
    }

    \draw[<->] (-10,0) -- (10,0);
    \draw[<->] (0,-10) -- (0,10);
    \draw (-10,0) node[anchor=south] {$\omega_1$}; 
    \draw (0,10)  node[anchor=west]  {$\omega_2$};
  \end{scope}

  \begin{scope}[shift={(15cm, 0cm)}]
    \fill[blue!30!white] (5.5,-0.6) rectangle (7.5,0.6);
    \draw (5.5,-1) -- (5.5,1);
    \draw (7.5,-1) -- (7.5,1);

    \draw (5.5,0) node[anchor=south east] {$\rho_{k-1}$}; 
    \draw (7.5,0) node[anchor=south west] {$\rho_{k}$}; 

    \foreach \x/\y in {
        6/0, 7/0, 
        6/1, 7/1,
        6/2, 7/2,
        5/3, 6/3,
        4/4, 5/4, 6/4,
        3/5, 4/5, 5/5,
        1/6, 2/6, 3/6, 4/6,
        1/7, 2/7}{
        \pgfmathparse{sqrt(\x^2 + \y^2)}
        \draw[blue!30!white,fill=black] (\pgfmathresult, 0) circle (\cs);
    }

    \draw[->] (0,0) -- (10,0);
    \draw (0,-0.5) -- (0,0.5);
    \draw (0,-0.3)  node[anchor=north] {0};
    \draw (10,-0.1) node[anchor=north] {$\lambda$};
  \end{scope}

  \begin{scope}[shift={(40cm, 0cm)}]
    \draw[fill=red!30!white] (-4,2) circle (2.8284271247);
    \draw[->] (-4,2) -- (-4-1.4142135624,2+2.4494897428);
    \draw[fill] (-4,2) circle (0.1);
    \draw (-3,1) node {$\bm{c}_j$}; 
    \draw (-3.8,3.8) node {$R$};

    \draw (-8,-8) rectangle (8,8);

    \draw (8,0) node[anchor=south west] {$\pi$}; 

    \draw[<->] (-10,0) -- (10,0);
    \draw[<->] (0,-10) -- (0,10);
    \draw (-10,0) node[anchor=south] {$x_1$}; 
    \draw (0,10)  node[anchor=west]  {$x_2$};
  \end{scope}
  
  \draw[->,thick,dashed] (6,6) to [bend left=40] (21,2);
  \draw (11.5, 9) node {$\norm{\bm{\omega}}^2 = \lambda$};

  \draw[->,thick] (22,-2) to [bend right=60] (34,-2);
  \draw (28, -6.5) node {$\mtx{\Phi}(\tau,\nu)$};

\end{tikzpicture}